\numberwithin{equation}{section}
\renewcommand{\Bar}{\overline}
\renewcommand{\S}{\mathcal{S}}
\newcommand{\R}{\mathbb{R}}
\newcommand{\N}{\mathbb{N}}
\newcommand{\Z}{\mathbb{Z}}
\newcommand{\C}{\mathbb{C}}
\renewcommand{\P}{\mathbb{P}}
\newcommand{\T}{\mathbb{T}}
\newcommand{\D}{\mathcal{D}}
\newcommand{\m}{\mathrm}
\newcommand{\lv}{\lVert}
\newcommand{\rv}{\rVert}
\newcommand{\J}{\boldsymbol{J}}
\newcommand{\al}{\alpha}
\newcommand{\be}{\beta}
\newcommand{\es}{\varnothing}
\newcommand{\lra}{\;\Leftrightarrow\;}
\newcommand{\ep}{\varepsilon}
\newcommand{\f}{\frac}
\newcommand{\gam}{\gamma}
\newcommand{\del}{\delta}
\newcommand{\pd}{\partial}
\newcommand{\grad}{\nabla}
\newcommand{\bpm}{\begin{pmatrix}}
\newcommand{\epm}{\end{pmatrix}}
\newcommand{\loc}{\m{loc}}
\renewcommand{\bar}{\overline}
\newcommand{\emb}{\hookrightarrow}
\newcommand{\norm}[1]{\left\lv#1\right\rv}
\newcommand{\abs}[1]{\left|#1\right|}
\newcommand{\p}[1]{\left(#1\right)}
\newcommand{\z}[1]{\mathring{#1}}
\newcommand{\br}[1]{\left\langle{#1}\right\rangle}
\renewcommand{\sb}[1]{\left[{#1}\right]}
\newcommand{\cb}[1]{\left\{{#1}\right\}}
\renewcommand{\bf}[1]{\mathbf{#1}}
\DeclareMathOperator{\curl}{curl}
\DeclareMathOperator{\Div}{div}
\DeclareMathOperator{\ten}{ten}
\providecommand{\br}[1]{\langle #1 \rangle}
\newtheorem{prop}{Proposition}[section]
\newtheorem{thm}[prop]{Theorem}
\newtheorem{defn}[prop]{Definition}
\newtheorem{lem}[prop]{Lemma}
\newtheorem{coro}[prop]{Corollary}
\newenvironment{customthm}[1]
  {\innercustomthm}
  {\endinnercustomthm}
\title[Global well-posedness near potential microflows]{
Analysis of micropolar fluids: existence of potential microflow solutions, nearby global well-posedness, and asymptotic stability
}
\author{Noah Stevenson}
\address{
Department of Mathematical Sciences\\
Carnegie Mellon University\\
Pittsburgh, PA 15213, USA
}
\email[N. Stevenson]{nwsteven@andrew.cmu.edu}
\author{Ian Tice}
\address{
Department of Mathematical Sciences\\
Carnegie Mellon University\\
Pittsburgh, PA 15213, USA
}
\email[I. Tice]{iantice@andrew.cmu.edu}
\thanks{I. Tice was supported by an NSF CAREER Grant (DMS \#1653161). N. Stevenson was supported by the summer research support provided by this grant. }
\subjclass[2010]{Primary: 76A05, 35B35, 76D03; Secondary: 35B04, 74A60, 35K40}
\keywords{Micropolar fluids, potential microflows, stability}
\begin{document}

\begin{abstract} 

In this paper we concern ourselves with an incompressible, viscous, isotropic, and periodic micropolar fluid. We find that in the absence of forcing and microtorquing there exists an infinite family of well-behaved solutions, which we call potential microflows, in which the fluid velocity vanishes identically, but the angular velocity of the microstructure is conservative and obeys a linear parabolic system.  We then prove that nearby each potential microflow, the nonlinear equations of motion are well-posed globally-in-time, and solutions are stable.  Finally, we prove that in the absence of force and microtorque, solutions decay exponentially, and in the presence of force and microtorque obeying certain conditions, solutions have quantifiable decay rates.

\end{abstract}

\maketitle


\section{Introduction}

\subsection{Overview}
The theory of micropolar fluids, first introduced by Eringen~\cite{eringen_1} to describe the mechanics of a microcontinuum, is an extension of the classical theory of fluid mechanics.  Among the novelties of the former theory are the effects of microstructure on the fluid.  In essence, the angular velocity and rotational inertia of the microstructure are accounted for at each point in the fluid, and the dynamics of these quantities couples to the bulk dynamics.  In the case of a viscous and incompressible fluid, the system is governed by a variant of the Navier-Stokes equations, coupled to dissipative evolution equations for the microangular momentum.  Micropolar fluids are common, and examples include: blood~\cite{ramkissoon_1,bhargava_1,mekheimer_1}, certain lubricants~\cite{allen_1,bayada_1,nicodemus_1,sinha_1}, liquid crystals~\cite{eringen_1,lhuillier_1,gay-balmaz_1}, and ferrofluids~\cite{nochetto_1}.

In  this paper we shall concern ourselves with the viscous and incompressible micropolar model.  For the sake of simplicity, our fluids are taken to be  spatially periodic, and our microstructure is assumed to be isotropic and homogeneous. Thus, our micropolar fluid occupies the three dimensional flat torus, $\T^3=\R^3/\Z^3$, and the microstructure has no preferred direction of rotation nor spatial dependence modulo proper rotation. The state of our micropolar fluid is described by three variables related via a system of nonlinear partial differential equations. The velocity and microangular velocity are a pair of evolving vector fields $u,\omega:\R^+\times\T^3\to\R^3$. The pressure, on the other hand, is an evolving scalar field $p:\R^+\times\T^3\to\R$.  The equations for such a fluid evolving from initial data $u_0,\omega_0:\T^3\to\R^3$, and subject to applied force and microtorque fields $f,g:\R^+\times\T^3\to\R^3$ read:
\begin{equation}\label{micro_polar}
\begin{cases}
\Div u=0 &\text{in }\R^+\times\T^3\\
\varrho( \pd_t u  + u \cdot \nabla u)  -\p{\ep + \frac{\kappa}{2}} \Delta u- \kappa \curl\omega + \grad p =f&\text{in }\R^+\times\T^3 \\
j(\pd_t\omega + u \cdot \nabla \omega)   -\p{\al+\gam}\Delta \omega- \p{\frac{\alpha}{3} + \beta - \gamma }   \grad\Div\omega+ 2\kappa \omega - \kappa \curl u =g &\text{in }\R^+\times\T^3\\
\p{u\p{0},\omega\p{0}}=\p{u_0,\omega_0}&\text{on } \T^3.
\end{cases}
\end{equation}
In the above, the physical parameters are as follows: $\varrho,j\in\R^+$ are the fluid density and microrotational inertia, $\ep,\al,\be,\gam\in\R^+$ are coefficients of viscosity and microviscosity, and $\kappa\in\R^+$ is the asymmetric viscosity coefficient. Note that $\kappa$ is what creates the coupling between the velocity and microangular velocity fields.

The appearance of these various coefficients in the problem \eqref{micro_polar} might appear awkward at first. Clarity can be found by rephrasing \eqref{micro_polar} in terms of stress tensors. First we need a quick definition: given a vector in $v\in \R^3$ we define the antisymmetric matrix $\ten(v)\in\R^{3\times3}$ as the unique one satisfying the identity $\ten(v)w=v\times w$ for all $w\in\R^3$. Then, if we define the stress and stress-couple tensors $S,C \in \R^{3 \times 3}$ via
\begin{equation}
    \begin{cases}
     S=-pI+\ep\p{Du+Du^{\m{t}}}+\kappa\p{\f12\p{Du-Du^{\m{t}}}-\ten(\omega)}\\
     C=\al\p{D\omega+D\omega^{\m{t}}-\f23\Div\omega I}+\be\Div\omega I+\gam\p{D\omega-D\omega^{\m{t}}},
    \end{cases}
\end{equation}
where the coefficients $\ep,\kappa,\al,\be$ and $\gamma$ appear more naturally, then we can rewrite system \eqref{micro_polar} as
\begin{equation}
    \begin{cases}
     \Div u=0&\text{in }\R^+\times\T^3\\
     \varrho\p{\pd_t+u\cdot\grad}u-\Div S=f&\text{in }\R^+\times\T^3\\
     j\p{\pd_t+u\cdot\grad}u-\Div C+\kappa\p{2\omega-\curl u}=g&\text{in }\R^+\times\T^3\\\p{u\p{0},\omega\p{0}}=\p{u_0,\omega_0}&\text{on }\T^3.
    \end{cases}
\end{equation}
Note that another one of the novelties of the micropolar model is that the stress tensor $S$, which is symmetric for standard fluids, has an antisymmetric contribution $\kappa \ten(\omega)$ due to the exchange of bulk angular momentum and microangular momentum.

Our first goal in this paper is to construct special solutions to \eqref{micro_polar}, which we call potential microflows.  These consist of solutions of the form $u=0$, $\omega = \zeta$, $p=0$, where $\zeta$ is a solution to a related parabolic problem.  The moniker potential microflow comes from the fact that $\zeta$ is a spatial gradient for all time.  The second goal of the paper is to construct global-in-time solutions to \eqref{micro_polar} near the potential microflows and to study their long time asymptotics.

\subsection{Some previous work}
Since the introduction of the micropolar model by Eringen~\cite{eringen_1} in the 1960s, the mathematics community has taken interest in the governing equations.  A full review of the math literature would be impractical, so we will only attempt a brief summary of the results related to the present paper.  We refer to the book of {\L}ukaszewicz~\cite{lukaszewicz_book} for some mathematical references.  Eringen's books~\cite{eringen2001microcontinuum,eringen_3} contain a wealth of references in the physical and engineering literature.  

The earliest result on global well-posedness of the micropolar equations in three dimensions is due to Galdi and Rionero \cite{galdi_1}.  {\L}ukaszewicz~\cite{lukaszewicz_4,lukaszewicz_3,lukaszewicz_2} studied global solutions and their asymptotics in three and two dimensions.  Rojas-Medar and Ortega-Torres~\cite{rojas-medar_orteg-torres_2005} and Yamaguchi~\cite{yamaguich_2005} constructed global solutions with other techniques.  Ferreira and Villamizar-Roa~\cite{ferreira_1} constructed distributional solutions.  Villamizar-Roa and Rodr\'{i}guez-Bellido~\cite{VR_RB_2008} constructed global solutions near stationary solutions and proved asymptotic stability.  Chen and Miao~\cite{chen_1} constructed solutions in critical Besov spaces.  

To the best of our knowledge, neither the potential microflow solutions nor the nearby solutions to \eqref{micro_polar} constructed in this paper have been studied previously.

\subsection{Statement of main results and discussion}

There are three main results in this paper.  For the sake of brevity we neglect to provide fully detailed statements here, and instead give informal abbreviated forms of the results.  The proper statements can be found later in the indicated theorems.

Our first result proves the existence of global-in-time potential microflow solutions to \eqref{micro_polar}.

\begin{customthm}{1}[Proved in Theorem \ref{micropotential_exist}]\label{result_1}
Given any conservative (curl-free) initial configuration for the angular velocity of the microstructure, $\zeta_0:\T^3\to\R^3$, there exists a microangular velocity $\zeta:\R^+\times\T^3\to\R^3$ such that the triple $\p{u,\omega,p}=\p{0,\zeta,0}$ is the unique solution to the system \eqref{micro_polar} with data $\p{u_0,\omega_0}=\p{0,\zeta_0}$ and vanishing force and microtorque, $f=g=0$.  These potential microflows are smooth in $\R^+ \times \T^3$ and decay to zero as $t \to \infty$. 
\end{customthm}

Our second result proves that nearby the potential microflow data the equations of motion in \eqref{micro_polar} are well-posed globally-in-time.

\begin{customthm}{2}[Proved in Theorem \ref{inverse function}]\label{result_2}
If $\zeta_0:\T^3\to\R^3$ is a sufficiently regular conservative generator for a potential microflow $\zeta$ as in Theorem \ref{result_1}, then the problem \eqref{micro_polar} is globally well-posed near $\zeta_0$ in the sense that to each tuple of data/forcing/microtorquing $\p{u_0,\omega_0,f,g}$ belonging to an open subset of an appropriate function space that contains $\p{0,\zeta_0,0,0}$, there exists a unique global-in-time solution $\p{u,\omega,p}$ to \eqref{micro_polar} that belongs to another appropriate function space.  Moreover, the map $\p{u_0,\omega_0,f,g}\mapsto\p{u,\omega,p}$ is smooth.
\end{customthm}

Our final result shows that nearby potential microflow data, the corresponding solutions to \eqref{micro_polar} are Lipschitz stable and gives sufficient conditions for attractiveness.

\begin{customthm}{3}[Proved in Theorem \ref{s_a}]\label{result_3}
If $\zeta_0:\T^3\to\R^3$ is a sufficiently regular conservative generator for a potential microflow $\zeta$ as in Theorem \ref{result_1}, then in a neighborhood of $\p{0,\zeta_0,0,0}$ in the space of data/forcing/microtorquing, the solutions to \eqref{micro_polar} are Lipschitz stable with respect to an appropriate norm.  Moreover, in the case we are given quantitative decay of the forcing and microtorquing, we find that the solutions satisfy corresponding quantitative decay estimates.
\end{customthm}

The potential microflows from Theorem \ref{result_1} are found as solutions to a time-dependent Lam\'e system, subject to the extra constraint that the field is conservative for all time.  In order to solve this system, we use the Leray projector (see Definition \ref{Leray}) to decouple the PDE into a pair of vectorial heat equations.  In Section \ref{sec_para_iso} we study these equations and record various standard parabolic estimates in terms of isomorphisms between certain Sobolev spaces.  These isomorphisms form the foundation of our subsequent analysis and also yield the existence of the desired potential microflow solutions in Theorem \ref{micropotential_exist}. 

With the potential microflow solutions in hand, we then turn to the construction of solutions to \eqref{micro_polar} nearby.  In order to control the behavior of the fluid velocity average for all time, we posit that a feature of the potential microflow forcing, $f=0$, persists in the general forcing; namely, we assume that $f$ has vanishing spatial average for all time.  This allows us to reduce, without loss of generality, to studying the problem \eqref{micro_polar} with the extra hypothesis that $u_0$ has vanishing spatial average.  We record the proof of this reduction in Appendix \ref{app_reduction}.

Our strategy for proving the global well-posedness assertion in Theorem \ref{result_2} is to pick appropriate container spaces for data/forcing/microtorquing and velocity/angular velocity/pressure and show that the natural nonlinear mapping from the latter to the former induced by \eqref{micro_polar} is smooth and a local diffeomorphism near each potential microflow.  Naturally, this is verified using the inverse function theorem.  This leads us to study the linearization of \eqref{micro_polar} around a potential microflow in Section \ref{sec_linearization}.  The resulting linearization (see \eqref{mp_linearized}) is a coupled vectorial parabolic system with coefficients that have nontrivial space-time dependence.  This precludes the use of semigroup techniques to construct solutions.  Instead, we take advantage of our analysis in Section \ref{sec_para_iso} and construct solutions with the help of the isomorphisms developed there and the Banach fixed point theorem.  

The fixed point scheme is most conveniently realized in the context low temporal regularity and integrability, and it is in this setting that we construct solutions.  We then exploit the natural energy-dissipation structure associated to \eqref{micro_polar} and its linearization \eqref{mp_linearized} to begin a bootstrap argument that ultimately shows that these solutions enjoy better temporal regularity and integrability.  With these results in hand, we then prove in Theorem \ref{linear iso} that the linearization \eqref{mp_linearized} induces an isomorphism between appropriate spaces.

In Section \ref{sec_nonlinear} we present the nonlinear analysis of \eqref{micro_polar}.  We employ some analysis tools from Appendix \ref{app_tools} to show that the nonlinear map associated to \eqref{micro_polar} is smooth on the spaces from Theorem \ref{linear iso}.  The theorem then shows that the linearization around a potential microflow is a linear homeomorphism, and so in Theorem \ref{inverse function} we employ the inverse function theorem to produce solutions to \eqref{micro_polar} near the potential microflows.  

Finally, we prove Theorem \ref{s_a}, which establishes the stability and attractiveness assertions of Theorem \ref{result_3}.  The proof relies on a synthesis of estimates provided by the inverse function theorem and by the energy-dissipation structure of \eqref{micro_polar}.

\subsection{Conventions of notation}

Here we record notation used throughout the paper.  The naturals, integers, reals, and complex numbers are denoted $\N$, $\Z$, $\R$, $\C$, respectively. We assume that $0\in\N$ and write $\N^+$ for the set $\N\setminus\cb{0}$.   Similarly, $\R^+ = (0,\infty)$ is the interval of positive real numbers. $\Bar{\R}$ is the usual two-point compactification of $\R$, created by adding the lower and upper endpoints of $\mp\infty$.   We say a constant $C$ is universal if it depends on the various physical parameters appearing in \eqref{micro_polar}, the physical dimension, or regularity parameters.  

Next we recall how distributions work on the torus.
\begin{defn}[Test functions and distributions]\label{space of test functions}
Let $d,\ell \in \N^+$.
\begin{enumerate}
\item
For each $m\in\N$ we define a seminorm $[\cdot]_m:C^\infty\p{\T^d;\C}\to\R$ via  
\begin{equation}
 [f]_m = \sum_{\substack{\al\in\N^d\\\abs{\alpha}=m}} \sup_{x \in \T^d}  \abs{\partial^\alpha f(x)}.
\end{equation}
This countable family of seminorms induces a Fr\'echet vector topology on $C^\infty\p{\T^d;\C}$.  When equipping $C^\infty\p{\T^d;\C}$ with this topology we shall use the notation $\D\p{\T^d;\C}$ and refer to this space as the space of test functions.

\item We denote the space of linear and continuous mappings $\D\p{\T^d;\C}\to\C^\ell$ by $\D^\ast\p{\T^d;\C^\ell}$ and refer to this set as the $\C^\ell$-valued periodic distributions.

\item We shall use the standard bracket notation to denote the pairing between the spaces of distributions and test functions, i.e. we define
\begin{equation}
\br{\cdot,\cdot}:\mathcal{D}^\ast\p{\T^d;\C^\ell}\times\mathcal{D}\p{\T^d;\C}\to\C^\ell \text{ via } \br{T,\psi}=T\p{\psi}.
\end{equation}

\end{enumerate}
\end{defn}

Next we recall the distributional Fourier transform.

\begin{defn}[Complex exponentials and the Fourier transform]\label{complex exp}
Let $d,\ell \in \N^+$.  
\begin{enumerate}
    \item For each $k\in\Z^d$ we define $\bf{e}_k:\T^d\to\C$ via $\bf{e}_k\p{x}= e^{2\pi i k \cdot x}$.  Clearly,  $\bf{e}_k\in \D\p{\T^d;\C}$ for each $k$.
    \item We define $\hat{\cdot}:\D^\ast\p{\T^d;\C^\ell} \to \p{\C^\ell}^{\Z^d}$ via $\hat{T}\p{k}=\br{T, \bf{e}_{-k}}$ for $k\in\Z^d$ and $T\in\D^\ast\p{\T^d;\C^\ell}$.  This mapping is called the  Fourier transform.
\end{enumerate}
\end{defn}

The decay at infinity of the Fourier transform of some distribution encodes regularity properties of the distribution. The following family of spaces exploits this fact.
\begin{defn}[Spatial Sobolev spaces]\label{sobolev spaces}
Let $s\in\R$, $d,\ell\in\R^+$.
\begin{enumerate}
    \item The $\C^\ell$-valued spatial Sobolev space is the set
    \begin{equation}
        H^s\p{\T^d;\C^\ell}=\cb{T\in\mathcal{D}^\ast\p{\T^d;\C^\ell}\;|\;\sum_{k\in\Z^3}\p{1+\abs{k}^2}^{s/2}\abs{\hat{T}\p{k}}^2<\infty},
    \end{equation}
    equipped with the inner-product
    \begin{equation}
        \p{T_0,T_1}_{H^s} = \sum_{k\in\Z^3}\p{1+\abs{k}^2}^{s}\p{\hat{T_0}\p{k} , \hat{T_1}\p{k}}_{\C^\ell}
    \end{equation}
    and norm $\norm{\cdot}_{H^s}$, generated by the above inner-product. It is well known that $H^s\p{\T^d;\C^\ell}$ is a separable Hilbert space.
    \item The $\R^\ell$-valued spatial Sobolev space is the closed subspace
    \begin{equation}
        H^s\p{\T^d;\R^\ell}=\cb{T\in H^s\p{\T^d;\C^\ell}\;|\;T=\Bar{T}}.
    \end{equation}
    We recall that the complex conjugate of a distribution $T\in\mathcal{D}^\ast\p{\T^d;\C^\ell}$ is also a distribution $\bar{T}\in\mathcal{D}^\ast\p{\T^d;\C^\ell}$ with action on $\psi\in\mathcal{D}\p{\T^d;\C}$ via $\br{\Bar{T},\psi}=\Bar{\br{T,\Bar{\psi}}}$.
    \item For $\mathbb{K}=\R$ or $\C$, the $\mathbb{K}^\ell$-valued and mean zero spatial Sobolev space is the closed subspace
    \begin{equation}
        \z{H}^s\p{\T^d;\mathbb{K}^\ell}=\cb{T\in H^s\p{\T^d;\mathbb{K}^\ell}\;|\;\hat{T}\p{0}=0}.
    \end{equation}
    We equip this space with a slightly modified inner-product
    \begin{equation}
        \p{T_0,T_1}_{\z{H}^s}=\sum_{k\in\Z^d\setminus\cb{0}}\abs{k}^{2s}\p{\hat{T_0}\p{k},\hat{T_1}\p{k}}_{\mathbb{C}^\ell}
    \end{equation}
    and let $\norm{\cdot}_{\z{H}^s}$ denote the corresponding norm, which is equivalent to the usual one.
\end{enumerate}

\end{defn}

Next we recall the a useful pseudo-differential operator, which acts on distributions in the spatial Sobolev spaces.

\begin{defn}\label{psuedo diff}
If $r,s\in\R$ and $d,\ell\in\N^+$ we define the operator $\J^s :H^r\p{\T^d;\C^\ell}\to H^{r-s}\p{\T^d;\C^\ell}$ via 
\begin{equation} 
\J^sT=\sum_{k\in\Z^d}\p{1+\abs{k}^2}^{s/2}\hat{T}\p{k}\bf{e}_k.
\end{equation}
Note that for $\mathbb{K}=\C$ or $\R$ $\J^s$ is an isometric isomorphism from  $H^r\p{\T^d;\mathbb{K}^\ell}$ to $H^{r-s}\p{\T^d;\mathbb{K}^\ell}$.
\end{defn}

Next, we wish to decompose the real spatial Sobolev spaces as orthogonal direct sum of solenoidal and conservative vector fields.

\begin{defn}[Leray projection]\label{Leray}
If $s\in\R$ and $d\in\R^+$ we define an operator, called the Leray projector, $\P:H^s\p{\T^d;\R^d}\to H^s\p{\T^d;\R^d}$ via
\begin{equation} 
\P T=\hat{T}(0)+\sum_{k\in\Z^d\setminus\cb{0}}\p{I-\f{k\otimes k}{\abs{k}^2}}\hat{T}\p{k}\bf{e}_k.
\end{equation}
It can be shown that $\P$ is well-defined (see Lemma \ref{real_lemma}), self-adjoint in  $H^s\p{\T^d;\R^d}$, and satisfies $\P^2=\P$. Therefore, by basic Hilbert space theory, it must be a projection onto its image. Moreover, $H^s\p{\T^d;\R^d}$ can be realized as the orthogonal direct sum of the image and kernel of $\P$. Indeed, we define $\P H^s\p{\T^d;\R^d}=H^s_{\perp}\p{\T^d;\R^d}$ and $\p{I-\P}H^s\p{\T^d;\R^d}=H^s_{\|}\p{\T^d;\R^d}$, which yields the decomposition
\begin{equation}\
H^s\p{\T^d;\R^d}=H^s_\perp\p{\T^d;\R^d}\oplus H^s_\|\p{\T^d;\R^d}.
\end{equation}
This notation indicates that if $T\in H^s_\perp\p{\T^d;\R^d}$, then $\hat{T}\p{k}\in\C^d$ points orthogonally (in the $\C^d$ sense) to $k\in\Z^d$. On the other hand, if $T\in H^s_{\|}\p{\T^d;\R^d}$, then $\hat{T}\p{k}$ is parallel (in the $\C^d$ sense) to $k$ (and, in particular, $T\in\z{H}^s\p{\T^d;\R^d}$). We shall refer to $H^s_\perp\p{\T^d;\R^d}$ as being a space of solenoidal fields, since each member of this space has trivial distributional divergence.  Similarly, each member of $H^s_{\|}\p{\T^d;\R^d}$ is said to be conservative, as they are distributional gradients (see Proposition \ref{potential_map}).
\end{defn}

Finally, we need some notions of the theory of (infinite dimensional) Banach-valued Sobolev spaces on subsets of the real line.

\begin{defn}[Space-time Sobolev spaces]\label{time into banach}
Let $s\in\R$, $d,\ell\in\N^+$, $n\in\N$, $\mathbb{K}=\C$ or $\R$, $\mathcal{X}\subseteq H^s\p{\T^d;\mathbb{K}^\ell}$ be a closed subspace, and $\es\neq I\subseteq\R$ be an open set.

\begin{enumerate}
    \item  If $f,g\in L^1_{\loc}\p{I;\mathcal{X}}$, we say that $g$ is the $n^{\m{th}}$ weak derivative of $f$, if for all $\psi\in C^\infty\p{I;\mathbb{K}}$ it holds that \begin{equation}\int_{I}f\psi^{\p{n}}=\p{-1}^n\int_{I}g\psi.\end{equation}
    In this case we write $g=f^{\p{n}}$, and when $n=1$ we write $g=f'$.
    \item The temporal Sobolev space or order $n$ is the set
    \begin{equation}
        H^n\p{I;\mathcal{X}}=\cb{f\in L^1_\loc\p{I;\mathcal{X}}\;:\;\forall\;j\in\cb{0,1,\dots,n},\;f^{\p{j}}\;\text{exists in the weak sense, and}\;\int_{I}\norm{f^{\p{n}}}_{\mathcal{X}}^{2}<\infty},
    \end{equation}
    equipped with inner-product
    \begin{equation}
        \p{f_0,f_1}_{H^n\p{I;\mathcal{X}}}=\sum_{j=0}^n\int_{I}\p{f_0^{\p{j}}, f_1^{\p{j}}}_{\mathcal{X}}.
    \end{equation}
\end{enumerate}

\end{defn}

\section{Parabolic isomorphisms and construction of the potential microflows}\label{sec_para_iso}

The first goal of this section is to construct solution operators to two different linear parabolic equations between the spaces from Definitions \ref{sobolev spaces}, \ref{Leray}, and \ref{time into banach}.  With these in hand, we then present the construction of the potential microflow solutions to \eqref{micro_polar}.

\subsection{Vectorial heat flow }

In this subsection we are interested in solutions $u: I \times \T^3 \to \R^3$  to the initial value problem
\begin{equation}\label{pde_1}
\begin{cases}
\pd_t u-\al \Delta u=f &\text{in }I\times\T^3\\
u\p{0,\cdot}=u_0&\text{on } \T^3,
\end{cases}
\end{equation}
where $I = (0,T)$ for $T \in (0,\infty]$, $\alpha \in \R^+$, and we are given initial data $u_0 \in H^{1+r}(\T^3;\R^3)$ and average-zero forcing $f\in L^2(I;\z{H}^{r}(\T^3;\R^3))\cap H^n(I;\z{H}^{r-2n}(\T^3;\R^3))$ for $r \in \R$ and $n \in \N$.  Of course, \eqref{pde_1} is a vectorial variant of the standard heat equation, and most of what we present here is well-known.  Our goal here is thus to quickly present the main features of \eqref{pde_1} in a functional analytic form useful for our subsequent work in the paper.  

We begin by defining the solution operator for \eqref{pde_1}.

\begin{defn}\label{Heat flow 1}
Let $\al\in\R^+$ and $r \in \R$.  We define the $\alpha-$heat flow mapping $\S_\alpha : \z{H}^r(\T^3;\R^3) \to L^1_{\m{loc}}(\R^+;\z{H}^r(\T^3;\R^3))$ via 
\begin{equation} 
\S_\al\p{g}\p{t}=\sum_{k\in\Z^3}\exp\p{-4\pi^2\al\abs{k}^2t}\hat{g}\p{k}\bf{e}_k,
\end{equation}
where the series clearly converges in $\z{H}^r(\T^3;\R^3)$ for each $t >0$ and defines an $\R^3$-valued distribution thanks to Lemma \ref{real_lemma}.
\end{defn}

The next result establishes the essential properties of the map $\S_\alpha$.

\begin{prop}\label{heat flow 1}
Let $\al\in\R^+$, $r \in \R$, and consider the mapping $\S_\alpha$ from Definition \ref{heat flow 1}.  Then the following hold.
\begin{enumerate}
\item If $s\in\R$ with $r\le s$, then there exists a constant $C >0$, depending on $\alpha$, $r$, and $s$, such that
\begin{equation} 
\norm{\S_\al\p{g}(t)}_{\z{H}^s} \le C \f{\exp\p{-2\pi^2\al t}}{t^{\f{s-r}{2}}}\norm{g}_{\z{H}^r}
\end{equation}
for every $g\in\z{H}^r\p{\T^3;\R^3}$ and $t \in \R^+$.  In particular, $\S_\al$ is smoothing: if  $g\in\z{H}^r\p{\T^3;\R^3}$, then $\S_\alpha\p{g}(t) \in C^\infty(\T^3;\R^3)$ for every $t \in \R^+$.

\item If $n\in\N$ and $0<T\in\bar{\R}$,  then
\begin{equation} 
\S_\al:\z{H}^{1+r}\p{\T^3;\R^3}\to L^2\p{\p{0,T};\z{H}^{2+r}\p{\T^3;\R^3}}\cap H^n\p{\p{0,T};\z{H}^{2+r-2n}\p{\T^3;\R^3}}
\end{equation}
is a bounded linear mapping.

\item  If $g \in \z{H}^r_\perp\p{\T^3;\R^3}$, then for each $t\in\R^+$ we have $\mathcal{S}_\al\p{g}\p{t} \in \z{H}^r_\perp\p{\T^3;\R^3}$.  Similarly, if $g\in H^r_\|\p{\T^3;\R^3}$, then for each $t\in\R^+$ we have $\mathcal{S}_\al\p{g}\p{t} \in H^r_\|\p{\T^3;\R^3}$.

\item For any $g\in\z{H}^r\p{\T^3;\R^3}$ we have 
\begin{equation}
\lim_{t\to0^+}\norm{\S_\al\p{g}\p{t}-g}_{\z{H}^r}=0 
\end{equation}
and 
\begin{equation}
 \partial_t \S_\alpha\p{g}(t) = \alpha \Delta \S_\alpha \p{g}(t) \text{ for } t \in \R^+.
\end{equation}

\end{enumerate}
\end{prop}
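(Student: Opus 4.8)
The plan is to carry out everything on the Fourier side, using that $\S_\al$ acts as the scalar Fourier multiplier $k\mapsto\exp(-4\pi^2\al\abs{k}^2t)$ and that on the mean-zero spaces the sum runs only over $k\in\Z^3\setminus\cb{0}$, so $\abs k\ge1$ throughout. The only genuine ingredient beyond bookkeeping is the elementary calculus bound $\sup_{x\ge0}x^ae^{-x}=(a/e)^a=:c_a<\infty$ for every $a\ge0$, combined with the identity $\norm{h}_{\z H^s}^2=\sum_{k\neq0}\abs k^{2s}\abs{\hat h(k)}^2$ from Definition \ref{sobolev spaces}.

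For part (1), I would compute, for $g\in\z H^r$ and $t>0$,
\[
\norm{\S_\al(g)(t)}_{\z H^s}^2=\sum_{k\neq0}\p{\abs k^{2(s-r)}e^{-4\pi^2\al\abs k^2t}}e^{-4\pi^2\al\abs k^2t}\,\abs k^{2r}\abs{\hat g(k)}^2,
\]
bound the isolated Gaussian factor by $e^{-4\pi^2\al t}$ (here $\abs k\ge1$), and, writing $\abs k^{2(s-r)}e^{-4\pi^2\al\abs k^2t}=t^{-(s-r)}(4\pi^2\al)^{-(s-r)}(4\pi^2\al\abs k^2t)^{s-r}e^{-4\pi^2\al\abs k^2t}$, bound the bracketed factor by $t^{-(s-r)}(4\pi^2\al)^{-(s-r)}c_{s-r}$ using $s-r\ge0$. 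This gives $\norm{\S_\al(g)(t)}_{\z H^s}\le Ct^{-(s-r)/2}e^{-2\pi^2\al t}\norm g_{\z H^r}$ with $C=C(\al,r,s)$. Since this holds for every $s\ge r$, $\S_\al(g)(t)$ lies in $\bigcap_s\z H^s(\T^3;\R^3)$, and the Sobolev embedding $\z H^s\emb C^\infty$ as $s\to\infty$ yields the smoothing claim.

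For part (3), the Leray projector $\P$ acts by the matrix multiplier $k\mapsto I-k\otimes k/\abs k^2$, which commutes with the scalar multiplier defining $\S_\al$; hence $\S_\al$ commutes with $\P$ and with $I-\P$ and so preserves their images $H^r_\perp$ and $H^r_\|$ (equivalently: if $\hat g(k)$ is orthogonal, resp.\ parallel, to $k$ for all $k$, so is $e^{-4\pi^2\al\abs k^2t}\hat g(k)$). For part (4), the convergence $\norm{\S_\al(g)(t)-g}_{\z H^r}^2=\sum_{k\neq0}\abs k^{2r}\abs{e^{-4\pi^2\al\abs k^2t}-1}^2\abs{\hat g(k)}^2\to0$ as $t\to0^+$ follows from dominated convergence: each summand tends to $0$ and is dominated by the summable sequence $\abs k^{2r}\abs{\hat g(k)}^2$, since $\abs{e^{-x}-1}\le1$ for $x\ge0$. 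The identity $\partial_t\S_\al(g)(t)=\al\Delta\S_\al(g)(t)$ is term-by-term differentiation: $\tfrac{d}{dt}e^{-4\pi^2\al\abs k^2t}=-4\pi^2\al\abs k^2e^{-4\pi^2\al\abs k^2t}$ matches the action of $\al\Delta$ on $\bf e_k$, and the interchange of $\partial_t$ with the sum is licensed because, for $t$ in a compact subset of $\R^+$, the differentiated series converges in every $\z H^s$ norm — the factor $\abs k^2e^{-4\pi^2\al\abs k^2t}$ is bounded and decays faster than any power of $\abs k^{-1}$.

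For part (2), I would first treat the $L^2$-in-time part: by Tonelli,
\[
\int_0^T\norm{\S_\al(g)(t)}_{\z H^{2+r}}^2\,dt=\sum_{k\neq0}\abs k^{2(2+r)}\abs{\hat g(k)}^2\int_0^Te^{-8\pi^2\al\abs k^2t}\,dt\le\f1{8\pi^2\al}\sum_{k\neq0}\abs k^{2(1+r)}\abs{\hat g(k)}^2=\f1{8\pi^2\al}\norm g_{\z H^{1+r}}^2.
\]
Then, using part (4) inductively, $\partial_t^j\S_\al(g)=\al^j\Delta^j\S_\al(g)$ for $0\le j\le n$, and since $\widehat{\Delta h}(k)=-4\pi^2\abs k^2\hat h(k)$ the operator $\Delta^j$ is $(4\pi^2)^j$ times an isometry $\z H^{2+r}\to\z H^{2+r-2j}$, whence
\[
\int_0^T\norm{\partial_t^n\S_\al(g)(t)}_{\z H^{2+r-2n}}^2\,dt=(4\pi^2\al)^{2n}\int_0^T\norm{\S_\al(g)(t)}_{\z H^{2+r}}^2\,dt\le C\norm g_{\z H^{1+r}}^2,
\]
with the analogous finite bounds for $0\le j\le n$, and linearity is immediate from the definition. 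The one point requiring care — the main obstacle, such as it is — is to confirm that these formal term-by-term time derivatives are the weak derivatives in the sense of Definition \ref{time into banach}, i.e.\ that $\int_I\S_\al(g)\psi^{(n)}=(-1)^n\int_I\al^n\Delta^n\S_\al(g)\,\psi$ for every scalar $\psi\in C^\infty(I;\R)$; this follows by pairing the term-by-term differentiated identity with $\psi^{(n)}$ and exchanging sum and integral, which the rapid-decay estimates above justify (the partial sums converge in, say, $L^2(I;\z H^{2+r-2n})$, and classical integration by parts in $t$ applies to each summand).
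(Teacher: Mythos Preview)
Your proof is correct and follows essentially the same approach as the paper's: split the Gaussian multiplier into two factors and optimize one via $\sup_{x\ge0}x^ae^{-x}<\infty$ for item (1), use Tonelli and the identity $\partial_t^j\S_\al(g)=(\al\Delta)^j\S_\al(g)$ for item (2), invoke commutativity with the Leray projector for item (3), and apply dominated convergence for item (4). You are in fact slightly more careful than the paper in justifying that the formal time derivatives coincide with the weak derivatives in Definition~\ref{time into banach}; the paper dispatches this with ``a direct computation, which we omit for the sake of brevity.''
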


\begin{proof}

Let $s \in \R$ with $r \le s$.  We then compute
\begin{equation}
\begin{split}
\norm{\mathcal{S}_\al(g)(t)}^2_{\z{H}^{s}}&=\sum_{k\in\Z^3\setminus\left\{0\right\}}|k|^{2s}\exp\left(-8\pi^2\al|k|^2t\right)\abs{\hat g(k)}^2\\
&\le \sup_{\rho\in\Z^3\setminus\left\{0\right\}}\left\{|\rho|^{2(s-r)}\exp\left(-4\pi^2\al|\rho|^2t\right)\right\} \sum_{k\in\Z^3\setminus\left\{0\right\}}\left|k\right|^{2r}\exp\left(-4\pi^2\al|k|^2t\right)\abs{\hat g(k)}^2\\
& \le C \f{\exp\left(-4\pi^2\al t\right)}{t^{s-r}}\norm{g}^2_{\z{H}^r}
\end{split}
\end{equation}
for the constant
\begin{equation}
 C = \p{4\pi^2\al}^{r-s}\sup\cb{\abs{\tilde{\rho}}^{2\p{s-r}}\exp\p{-\abs{\tilde{\rho}}^2}\;|\;\tilde{\rho}\in\R^3} \in\R^+.
\end{equation}
This constant depends only on $r,s$, and $\alpha$, which proves the first item.

We now turn to the proof of the second item.  We will present only a sketch in the case $T=\infty$, as the case $T < \infty$ follow directly from this.  Let $g\in\z{H}^{1+r}(\T^3;\R^3)$.   We first use Tonelli's theorem to compute
\begin{multline}
	\norm{\mathcal{S}_\al(g)}_{L^2\z{H}^{2+r}}^2 =\int_{\R^{+}}\norm{\mathcal{S}_\al\left(g \right)(t)}^2_{\z{H}^{2+r}}\m{d}t 
	=\int_{\R^{+}}\sum_{k\in\Z^3\setminus\{0\}}|k|^{2r+4}\exp\left(-8\pi^2\al|k|^2t\right)\abs{\hat{g}(k)}^2\m{d}t \\
	=\frac{1}{8\pi^2 \al}  \sum_{k\in\Z^3\setminus\{0\}}|k|^{2r+2} \abs{\hat{g}(k)}^2 
	=\f{1}{8\pi^2\al}\norm{g}^2_{\z{H}^{1+r}},
\end{multline}
which shows that $\S_\al(g) \in L^2(\R^+; \z{H}^{2+r}\p{\T^3;\R^3})$.  Next we note that a direct computation, which we omit for the sake of brevity, shows that if $\S_\alpha(g)^{\p{k}}$ denotes the $k^{th}$ weak partial time derivative of $\S_{\alpha}(g)$, then
\begin{equation}
 \S_\al(g)^{\p{k}}(t)=(\alpha \Delta)^k \S_\al(g)(t) \text{ for } t \in \R^+.
\end{equation}
From this we then deduce that
\begin{equation} 
\norm{\S_\al(g)^{\p{k}}}^2_{L^2\z{H}^{2+r-2k}}=\left(4\pi^2\al\right)^k\norm{\mathcal{S}_\al(g)}_{L^2\z{H}^{2+r}}^2=\f12\left(4\pi^2\al\right)^{k-1}\norm{g}_{\z{H}^{1+r}}^2
\end{equation}
for each $0 \le k \le n$.  This prove that $\S_\al(g) \in H^n(\R^+; \z{H}^{2+r-2n} \p{\T^3;\R^3})$, which completes the proof of the second item.

The third item follows directly from the definition of $\S_\al$, and the fourth item follows from the dominated convergence theorem and the above calculations.
\end{proof}

Next we wish to define `convolution' with $\S_\al$ as a solution operator to the inhomogeneous problem \eqref{pde_1} with zero initial data.

\begin{defn}\label{Convolution}
For $\p{r,n}\in\R\times\N$ and $0< T\in\bar{\R}$ we define
\begin{multline} 
\S_\al\ast:L^2\p{\p{0,T};\z{H}^r\p{\T^3;\R^3}}\cap H^n\p{\p{0,T};\z{H}^{r-2n}\p{\T^3;\R^3}} \\
\to L^2\p{\p{0,T};\z{H}^{2+r}\p{\T^3;\R^3}}\cap H^{n+1}\p{\p{0,T};\z{H}^{r-2n}\p{\T^3;\R^3}}
\end{multline}
via
\begin{equation} 
\S_\al\ast f\p{t}=\sum_{k\in\Z^3}\bf{e}_k\int_{\p{0,t}}\exp\p{-4\pi^2\al\abs{k}^2\p{t-\tau}}\hat{f}\p{\tau,k}\;\m{d}\tau 
\end{equation}
for $t\in\p{0,T}$.  Note that this defines an $\R^3$-valued distribution by Lemma \ref{real_lemma}.
\end{defn}

It's not a priori clear that $\S_\al\ast$ actually takes values in the codomain listed in Definition \ref{Convolution}.  We verify this and some other basic properties now.

\begin{prop}\label{Convolution well defined}
Let $\p{r,n}\in\R\times\N$, $0< T\in\bar{\R}$, and consider $\S_\al\ast$ given by Definition \ref{Convolution}.  Then $\S_\al\ast$ is well-defined with the codomain stated in the definition.  Moreover, $\S_\al\ast$ is a bounded linear map, and for any $f\in L^2\p{\p{0,T};\z{H}^r(\T^3;\R^3)}\cap H^n\p{\p{0,T};\z{H}^{r-2n}(\T^3;\R^3)}$ we have that 
\begin{equation}\label{Convolution well defined limit}
\lim_{t\to0^+}\norm{\S_\al\ast f\p{t}}_{\z{H}^{1+r}}=0 
\end{equation}
and 
\begin{equation}
 \partial_t \S_\alpha\ast f(t) = \alpha \Delta \S_\alpha\ast f(t) + f(t) \text{ for } t \in (0,T).
\end{equation}

\end{prop}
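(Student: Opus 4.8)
The plan is to verify the three assertions of Proposition \ref{Convolution well defined} — that $\S_\al\ast f$ lands in the stated codomain with a bounded linear bound, that it has vanishing $\z{H}^{1+r}$ trace at $t=0^+$, and that it solves the inhomogeneous heat equation — all by direct Fourier-side computation, mirroring the proof of Proposition \ref{heat flow 1}. Throughout, write $\widehat{\S_\al\ast f}(t,k) = \int_0^t e^{-4\pi^2\al|k|^2(t-\tau)}\hat f(\tau,k)\,\m{d}\tau$ for $k \neq 0$ (the $k=0$ mode vanishes since $f$ is mean-zero), and treat this as a scalar Duhamel convolution in $\tau$ against the kernel $\tau \mapsto e^{-4\pi^2\al|k|^2\tau}\mathbf{1}_{\tau>0}$.

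\textbf{Step 1: the $L^2\z{H}^{2+r}$ bound.} For each fixed $k$, the function $\widehat{\S_\al\ast f}(\cdot,k)$ is the convolution on $\R^+$ of $\hat f(\cdot,k)$ (extended by zero) with $e^{-4\pi^2\al|k|^2\tau}\mathbf 1_{\tau>0}$, whose $L^1(\R^+)$ norm is $(4\pi^2\al|k|^2)^{-1}$. By Young's convolution inequality, $\|\widehat{\S_\al\ast f}(\cdot,k)\|_{L^2(0,T)} \le (4\pi^2\al|k|^2)^{-1}\|\hat f(\cdot,k)\|_{L^2(0,T)}$. Multiplying by $|k|^{2+r}$, squaring, and summing over $k \in \Z^3\setminus\{0\}$ via Tonelli gives $\|\S_\al\ast f\|_{L^2\z{H}^{2+r}} \le (4\pi^2\al)^{-1}\|f\|_{L^2\z{H}^{r}}$, since the factor $|k|^{2+r}\cdot|k|^{-2} = |k|^{r}$ matches the $\z{H}^r$ weight on $\hat f$. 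This handles the spatial-regularity component of the codomain.

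\textbf{Step 2: the temporal derivatives.} Differentiating $\widehat{\S_\al\ast f}(t,k)$ in $t$ under the integral (justified a posteriori, or by first working with smooth-in-time $f$ and passing to the limit) yields the identity $\partial_t \widehat{\S_\al\ast f}(t,k) = -4\pi^2\al|k|^2\,\widehat{\S_\al\ast f}(t,k) + \hat f(t,k)$, which on the distributional level is exactly $\partial_t \S_\al\ast f = \al\Delta\, \S_\al\ast f + f$ — the third assertion — and shows $\S_\al\ast f$ has a weak time derivative. Iterating: $(\S_\al\ast f)^{(j)} = (\al\Delta)^j \S_\al\ast f + \sum_{i=0}^{j-1}(\al\Delta)^i f^{(j-1-i)}$ for $1 \le j \le n+1$. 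Each term $(\al\Delta)^i f^{(j-1-i)}$ with $j-1-i \le n$ lies in $L^2\big((0,T); \z{H}^{r-2n}\big)$ because $f^{(j-1-i)} \in L^2\big((0,T);\z{H}^{r-2(j-1-i)}\big)$ by interpolation of the hypotheses on $f$ (specifically, $f \in L^2\z{H}^r \cap H^n\z{H}^{r-2n}$ embeds into $H^{j-1-i}\z{H}^{r-2(j-1-i)}$), and applying $(\al\Delta)^i$ costs $2i$ derivatives, landing in $\z{H}^{r-2(j-1)} \subseteq \z{H}^{r-2n}$; the term $(\al\Delta)^j \S_\al\ast f$ lies in $L^2\z{H}^{2+r-2j} \subseteq L^2\z{H}^{r-2n}$ by Step 1 applied at spatial regularity $2+r-2j$, which only needs the bound from item (1)-style manipulation. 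Hence $\S_\al\ast f \in H^{n+1}\big((0,T);\z{H}^{r-2n}\big)$, and combining with Step 1 gives the full codomain membership and the asserted boundedness.

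\textbf{Step 3: the initial trace.} For the $\z{H}^{1+r}$ limit at $t = 0^+$, estimate $|k|^{1+r}|\widehat{\S_\al\ast f}(t,k)| \le |k|^{1+r}\big(\int_0^t e^{-8\pi^2\al|k|^2(t-\tau)}\m d\tau\big)^{1/2}\|\hat f(\cdot,k)\|_{L^2(0,t)}$ by Cauchy–Schwarz, and the parenthesized factor is $\le (8\pi^2\al|k|^2)^{-1/2}$, so $|k|^{1+r}|\widehat{\S_\al\ast f}(t,k)| \le C|k|^{r}\|\hat f(\cdot,k)\|_{L^2(0,t)}$, whence $\|\S_\al\ast f(t)\|_{\z{H}^{1+r}}^2 \le C\sum_{k\neq 0}|k|^{2r}\|\hat f(\cdot,k)\|_{L^2(0,t)}^2 = C\|f\|_{L^2((0,t);\z{H}^r)}^2 \to 0$ as $t \to 0^+$ by dominated convergence (or absolute continuity of the integral). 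The main obstacle is the bookkeeping in Step 2: one must carefully track how the finitely many forcing terms generated by iterating Duhamel distribute across the two intersected spaces, and justify differentiating under the integral — I would sidestep the latter by establishing the identities first for $f$ smooth and compactly supported in time (dense in the source space) and then extending by the boundedness from Steps 1–2. The remaining computations are routine Fourier-multiplier estimates of the type already performed for Proposition \ref{heat flow 1}.
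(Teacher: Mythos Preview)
Your proposal is correct and follows essentially the same route as the paper: Young's inequality on the Fourier side for the $L^2\z{H}^{2+r}$ bound, the Duhamel identity $(\S_\al\ast f)' = \al\Delta\,\S_\al\ast f + f$ plus iteration together with Proposition~\ref{interpolation} for the $H^{n+1}\z{H}^{r-2n}$ bound, and Cauchy--Schwarz on the Fourier side for the vanishing trace at $t=0^+$. The only cosmetic differences are that the paper establishes the weak-derivative identity by testing against $\phi\in C_c^\infty((0,T);\C)$ rather than by differentiating under the integral (your density-and-extension workaround accomplishes the same thing), and the paper leaves the iteration in Step~2 implicit where you spell out the formula $(\S_\al\ast f)^{(j)} = (\al\Delta)^j\S_\al\ast f + \sum_{i=0}^{j-1}(\al\Delta)^i f^{(j-1-i)}$ explicitly.
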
	
\begin{proof}
Linearity of $\S_\al\ast$ is clear, so it suffices to prove boundedness.  Let $f\in L^2\p{\p{0,T};\z{H}^r(\T^3;\R^3)}\cap H^n\p{\p{0,T};\z{H}^{r-2n}(\T^3;\R^3)}$.  Using Tonelli's theorem and Young's convolution inequality, we compute
\begin{equation}
\begin{split}
\norm{\S_\al\ast f}^2_{L^2\p{\p{0,T};\z{H}^{2+r}}}& 
\le\sum_{k\in\Z^3\setminus\{0\}}|k|^{2r+4}\int_{(0,T)}\p{\int_{(0,T)}\exp\left(-4\pi^2\al|k|^2(t-\tau)\right)\abs{\hat{f}(\tau,k)}\m{d}\tau}^2\;\m{d}t\\
&\le\sum_{k\in\Z^3\setminus\left\{0\right\}}|k|^{2r+4}\p{\int_{(0,T)}\exp\p{-4\pi^2\al|k|^2t}\m{d}t}^2\int_{(0,T)}\abs{\hat f(\tau,k)}^2\m{d}\tau \\
& =\f{1}{4\pi^2\al}\sum_{k\in\Z^3\setminus\left\{0\right\}}\abs{k}^{2r}\int_{(0,T)}\abs{\hat f(\tau,k)}^2\m{d}\tau\\
&=\f{1}{4\pi^2\al}\norm{f}^2_{L^2\p{\p{0,T};\z{H}^r}}.
\end{split}
\end{equation}

On the other hand, a simple computation shows that for all $\phi\in C^\infty_c\p{\p{0,T};\C}$ we have
\begin{equation} 
-\int_{\p{0,T}}\phi'\;\S_\al\ast f=\int_{\p{0,T}}\p{\al\Delta \S_\al\ast f+f}\phi
\end{equation}
and hence $\S_\al\ast f$ is weakly differentiable in time with  $\p{\S_\al\ast f}'=\al \Delta \S_\al\ast f+f$.  Arguing as above, we deduce from this identity that 
\begin{equation} 
\norm{\p{\mathcal{S}_\al\ast f}'}_{L^2\p{\p{0,T}\z{H}^r}}\le C\norm{f}_{L^2\p{\p{0,T};\z{H}^{r}}}.
\end{equation}
Iterating and applying Proposition \ref{interpolation}, we deduce the existence of a constant $C>0$, depending on $s,r,n,\alpha$, such that
\begin{equation} 
\norm{\S_\al\ast f}_{L^2\p{\p{0,T};\z{H}^{2+r}}}+\norm{\S_\al\ast f}_{H^{n+1}\p{\p{0,T};\z{H}^{r-2n}}}\le C\p{\norm{f}_{L^2\p{\p{0,T};\z{H}^{r}}}+\norm{f}_{H^n\p{\p{0,T};\z{H}^{r-2n}}}}.
\end{equation}
This proves that $\S_\al\ast$ is well-defined and gives rise to a bounded linear map.

It remains to prove \eqref{Convolution well defined limit}.  For this we use Tonelli's theorem and the Cauchy-Schwarz inequality to bound, for $t \in \R^+$, 
\begin{multline}
\norm{\S_\al\ast f(t)}_{\z{H}^{1+r}}^2 \le \sum_{k \in \Z^3 \backslash \{0\}} \abs{k}^{2r+2} \p{\int_{\R^+} \exp(-8\pi^2 \alpha \abs{k}^2 \tau \m{d}\tau ) } \p{ \int_{(0,t)} \abs{\hat{f}(\tau,k)}^2  \m{d}\tau } \\
= \frac{1}{8\pi^2 \alpha} \norm{f}_{L^2\p{\p{0,t};\z{H}^r}}^2.
\end{multline}
Then \eqref{Convolution well defined limit} follows from this and the monotone convergence theorem.
\end{proof}

The data flow operator from Definition \ref{Heat flow 1} and the convolution operator from Definition \ref{Convolution} sum to create a solution operator to \eqref{pde_1}.  We prove this now.

\begin{thm}\label{type 1 solution}
Let $\p{r,n}\in\R\times\N$, $0<T\in\bar{\R}$, and consider the mapping
\begin{multline} 
\Upsilon:L^2\p{\p{0,T};\z{H}^{2+r}\p{\T^3;\R^3} }\cap H^{n+1}\p{\p{0,T};\z{H}^{r-2n}\p{\T^3;\R^3}} \\
\to \z{H}^{1+r}\p{\T^3;\R^3} \times\p{L^2\p{\p{0,T};\z{H}^r\p{\T^3;\R^3}}\cap H^n\p{\p{0,T};\z{H}^{r-2n}\p{\T^3;\R^3}}}
\end{multline}
defined via 
 \begin{equation} 
\Upsilon\p{u}=\p{u\p{0},\partial_t u-\al\Delta u},
\end{equation}
where $u\p{0}$ is understood as $\lim_{t\to 0^+}u\p{t}$ in the $\z{H}^{1+r}\p{\T^3;\R^3}$ topology (see Proposition \ref{interpolation}). Then the following hold.
\begin{enumerate}
 \item$\Upsilon$ is well-defined and a linear isomorphism.
 \item The inverse of $\Upsilon$ is given explicitly as $\Upsilon^{-1}\p{g,f}=\S_\al g+\S_\al\ast f$ for $\S_\al$ and $\S_\al\ast$ as defined in Definitions \ref{Heat flow 1} and \ref{Convolution}, respectively.
\end{enumerate}
\end{thm}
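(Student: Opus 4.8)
The strategy is to show that the operator $\Psi$ defined by $\Psi(g,f) = \S_\al g + \S_\al \ast f$, built from the data flow of Definition \ref{Heat flow 1} and the convolution of Definition \ref{Convolution}, is a bounded two-sided inverse of $\Upsilon$; this proves both items at once. The first task is to verify that $\Upsilon$ is well-defined and bounded. Write $\mathcal{E}$ for the domain space $L^2((0,T);\z{H}^{2+r}\p{\T^3;\R^3}) \cap H^{n+1}((0,T);\z{H}^{r-2n}\p{\T^3;\R^3})$. Invoking the parabolic interpolation result of Proposition \ref{interpolation}, one has the continuous embeddings $\mathcal{E} \emb C^0([0,T);\z{H}^{1+r}\p{\T^3;\R^3})$ and $\mathcal{E} \emb H^j((0,T);\z{H}^{2+r-2j}\p{\T^3;\R^3})$ for each $0 \le j \le n+1$. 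The first of these gives meaning to the trace $u(0) = \lim_{t\to0^+} u(t)$ in $\z{H}^{1+r}$ and controls $\norm{u(0)}_{\z{H}^{1+r}}$ by $\norm{u}_{\mathcal{E}}$. The second, used with $j=1$ and $j=n+1$, places $\pd_t u$ in $L^2((0,T);\z{H}^r) \cap H^n((0,T);\z{H}^{r-2n})$, while $\al\Delta u$ lies in $L^2((0,T);\z{H}^r)$ since $u \in L^2((0,T);\z{H}^{2+r})$ and in $H^n((0,T);\z{H}^{r-2n})$ by the $j=n$ embedding. Hence $\pd_t u - \al\Delta u$ lands in the second factor of the codomain with norm bounded by $\norm{u}_{\mathcal{E}}$, so $\Upsilon$ is a bounded linear map as claimed.

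Next I would check that $\Psi$ maps the codomain of $\Upsilon$ boundedly into $\mathcal{E}$. Applying Proposition \ref{heat flow 1}(2) with $n$ replaced by $n+1$ and using $\z{H}^{2+r-2(n+1)} = \z{H}^{r-2n}$ shows $\S_\al$ sends $\z{H}^{1+r}\p{\T^3;\R^3}$ boundedly into $\mathcal{E}$, and Proposition \ref{Convolution well defined} shows $\S_\al\ast$ sends $L^2((0,T);\z{H}^r\p{\T^3;\R^3}) \cap H^n((0,T);\z{H}^{r-2n}\p{\T^3;\R^3})$ boundedly into $\mathcal{E}$; thus $\Psi$ is a bounded linear map into $\mathcal{E}$. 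Then I would verify $\Upsilon \circ \Psi = \m{id}$: for $(g,f)$ in the codomain and $u = \Psi(g,f)$, Proposition \ref{heat flow 1}(4) with $r$ replaced by $1+r$ gives $\S_\al g(t) \to g$ in $\z{H}^{1+r}$, the limit \eqref{Convolution well defined limit} gives $\S_\al\ast f(t) \to 0$ in $\z{H}^{1+r}$, so $u(0) = g$; and the identities $\pd_t \S_\al g = \al\Delta \S_\al g$ and $\pd_t(\S_\al\ast f) = \al\Delta(\S_\al\ast f) + f$ from those same propositions give $\pd_t u - \al\Delta u = f$. Hence $\Upsilon(\Psi(g,f)) = (g,f)$.

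For $\Psi \circ \Upsilon = \m{id}$ it suffices to prove a uniqueness statement: any $w \in \mathcal{E}$ with $w(0) = 0$ and $\pd_t w - \al\Delta w = 0$ is identically zero. Applying this to $w = u - \Psi(\Upsilon(u))$, which lies in $\mathcal{E}$ by the previous steps and satisfies $w(0) = 0$ and $\pd_t w - \al\Delta w = 0$ by $\Upsilon\circ\Psi = \m{id}$, then yields $\Psi(\Upsilon(u)) = u$. To prove the uniqueness statement I would pass to Fourier coefficients: for $k = 0$ one has $\hat w(t,0) = 0$ since $w$ is valued in a mean-zero space, and for each $k \in \Z^3\setminus\{0\}$ the coefficient $t \mapsto \hat w(t,k)$ lies in $H^{n+1}((0,T);\C^3) \subseteq C^0_{\m{loc}}$, satisfies $\hat w(0,k) = \widehat{w(0)}(k) = 0$, and solves the scalar linear ODE $\pd_t \hat w(t,k) = -4\pi^2\al\abs{k}^2 \hat w(t,k)$ (a.e., hence classically after modification on a null set), so $\hat w(\cdot,k) \equiv 0$; since all Fourier coefficients vanish, $w \equiv 0$.

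Combining these, $\Upsilon$ is a bounded linear bijection with bounded two-sided inverse $\Psi$, hence a linear isomorphism, and $\Upsilon^{-1}(g,f) = \S_\al g + \S_\al\ast f$, which is exactly items (1) and (2). The genuinely delicate step is the well-definedness in the first paragraph: extracting the trace $u(0)$ in $\z{H}^{1+r}$ and pinning down the correct space–time integrability of $\pd_t u$ and $\Delta u$ both rest entirely on the parabolic interpolation embeddings of Proposition \ref{interpolation}, so the cleanliness of the whole argument is inherited from that result; the composition identities and the Fourier-mode uniqueness are then routine consequences of the limit and mapping properties already established for $\S_\al$ and $\S_\al\ast$.
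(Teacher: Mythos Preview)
Your proof is correct and follows essentially the same architecture as the paper: verify well-definedness and boundedness of $\Upsilon$ via Proposition \ref{interpolation}, then check that $\Psi(g,f)=\S_\al g+\S_\al\ast f$ is a right inverse using the initial-value and PDE identities from Propositions \ref{heat flow 1} and \ref{Convolution well defined}, and finally establish injectivity via a uniqueness statement for the homogeneous problem.

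The one genuine point of difference is the uniqueness argument. The paper proves injectivity by an energy estimate: testing $\partial_t u-\al\Delta u=0$ against $u(t)$ in $\z{H}^r$ yields $\tfrac12\bigl(\norm{u(t)}_{\z{H}^r}^2\bigr)'+4\pi^2\al\norm{u(t)}_{\z{H}^{1+r}}^2=0$, which after integration and $u(0)=0$ forces $u\equiv 0$. You instead decouple mode-by-mode on the Fourier side, reducing to the scalar ODE $\partial_t\hat w(t,k)=-4\pi^2\al\abs{k}^2\hat w(t,k)$ with vanishing initial data. Both are valid; your argument is slightly more elementary and avoids the inner-product computation, while the paper's energy method is the template reused later in the nonlinear analysis (Lemmas \ref{ED} and \ref{ED estimates}), so it has some expository value in foreshadowing that structure.
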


\begin{proof}
Proposition \ref{interpolation} ensures us that if $u$ belongs to the domain of $\Upsilon$, then after modification on a null set $u\in UC^0_b\p{\p{0,T};\z{H}^{1+r}\p{\T^3;\R^3}}$. Thus there exists $u\p{0}\in\z{H}^{1+r}\p{\T^3;\R^3}$ for which $\lim_{t\to0^+}u\p{t}=u\p{0}$ in the $\z{H}^{1+r}\p{\T^3;\C^3}$ topology. Moreover, there is a constant $C\in\R^+$ depending only on $T$ and $\al$ for which we may estimate:
\begin{equation} 
\norm{u\p{0}}_{\z{H}^{1+r}}\le\sup_{t\in\p{0,T}}\norm{u\p{t}}_{\z{H}^{1+r}}\le C\p{\norm{u}_{L^2\p{\p{0,T};\z{H}^{2+r}}}+\norm{u}_{H^{n+1}\p{\p{0,T};\z{H}^{r-2n}}}}
\end{equation}
and 
\begin{equation}
 \norm{\partial_t u-\al\Delta u}_{L^2\p{\p{0,T};\z{H}^{r}}}+\norm{\partial_t u-\al\Delta u}_{H^n\p{\p{0,T};\z{H}^{r-2n}}}\le C\p{ \norm{u}_{L^2\p{\p{0,T};\z{H}^{2+r}}}+\norm{u}_{H^{n+1}\p{\p{0,T};\z{H}^{r-2n}}}}.
\end{equation}
Hence $\Upsilon$ is well defined and bounded. 

We now turn to the proof that $\Upsilon$ is surjective. Propositions \ref{heat flow 1} and \ref{Convolution well defined} ensure us that for a pair $\p{g,f}$ belonging to the codomain of $\Upsilon$ we have that $\S_\al g+\S_\al\ast f$ belongs to the domain of $\Upsilon$, $\lim_{t\to0^+}\p{\S_\al g+\S_\al\ast f}=g$ in $\z{H}^{1+r}\p{\T^3;\R^3}$, and 
\begin{equation} 
\partial_t \p{S_\al g+\S_\al\ast f} = \p{S_\al g+\S_\al\ast f}'= \al\Delta\p{\S_\al g+\S_\al\ast f}+f.
\end{equation}
Together, these imply that $\Upsilon\p{\S_\al g+\S_\al\ast f}=\p{g,f}$, which establishes surjectivity. 

We now turn to injectivity.  Suppose that $u$ is in the kernel of $\Upsilon$.   Then for all almost every $t\in\p{0,T}$ we have $\partial_t u\p{t}- \al\Delta u\p{t}=0$.  We take the inner-product with $u\p{t}$ in the space $\z{H}^r\p{\T^3;\R^3}$ in order to observe that for almost every $t\in\p{0,T}$, 
\begin{equation} 
\f12\p{\norm{u\p{t}}_{\z{H}^r}^2}'+4\pi^2\al\norm{u\p{t}}_{\z{H}^{1+r}}^2=0.
\end{equation}
Integrating and using the fact that $\lim_{t\to0^+}\norm{u\p{t}}_{\z{H}^{1+r}}=0$, we deduce that $u =0$.  Hence $\Upsilon$ is injective, and thus an isomorphism.
\end{proof}

The final result in this subsection concerns the restriction of $\Upsilon$ from Theorem \ref{type 1 solution} to spaces of the form $\z{H}^r_{\Gamma}\p{\T^3;\R^3}$, where $\Gamma \in \{\perp,\|\}$, as defined in Definition \ref{Leray}.

\begin{thm}\label{1 Preservation}
Let $\p{r,n}\in\R\times\N$ and $0<T\in\bar{\R}$.  For $\Gamma \in \{\perp,\|\}$  define
\begin{multline}
 \Upsilon_\Gamma : \p{ L^2\p{\p{0,T};\z{H}^{2+r}_\Gamma \p{\T^3;\R^3}}\cap H^{n+1}\p{\p{0,T};\z{H}^{r-2n}_\Gamma \p{\T^3;\R^3}}} \\
 \to \z{H}^{1+r}_\Gamma \p{\T^3;\R^3} \times L^2\p{\p{0,T};\z{H}^r_\Gamma \p{\T^3;\R^3} }\cap H^n\p{\p{0,T};\z{H}^{r-2n}_\Gamma \p{\T^3;\R^3}}
\end{multline}
via 
\begin{equation}
 \Upsilon_\Gamma(u) = (u(0), \partial_t u - \alpha \Delta u) = \Upsilon(u).
\end{equation}
Then $\Upsilon_\Gamma$ are well-defined, bounded, and linear isomorphisms.
\end{thm}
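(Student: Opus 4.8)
The plan is to realize $\Upsilon_\Gamma$ as the restriction of the isomorphism $\Upsilon$ from Theorem \ref{type 1 solution} to the closed subspaces of $\Gamma$-valued functions, and to transfer each conclusion---well-definedness, boundedness, injectivity, surjectivity---directly from the corresponding property of $\Upsilon$, using only that the Leray decomposition of Definition \ref{Leray} is implemented by Fourier multipliers and that $\z{H}^s_\Gamma\p{\T^3;\R^3}$ is a closed subspace of $\z{H}^s\p{\T^3;\R^3}$ for every $s\in\R$. Writing $X,Y$ for the domain and codomain of $\Upsilon$ and $X_\Gamma\subseteq X$, $Y_\Gamma\subseteq Y$ for the spaces in the present statement, I first record that $\z{H}^s_\Gamma$ is the kernel of the restriction to $\z{H}^s$ of a bounded linear operator---namely $I-\P$ if $\Gamma=\perp$ and $\P$ if $\Gamma=\|$---hence closed; it follows that $L^2\p{\p{0,T};\z{H}^s_\Gamma}$ and $H^n\p{\p{0,T};\z{H}^s_\Gamma}$ are closed in their counterparts over $\z{H}^s$, so $X_\Gamma$ and $Y_\Gamma$ are closed subspaces of $X$ and $Y$. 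Consequently, as soon as $\Upsilon$ is shown to map $X_\Gamma$ into $Y_\Gamma$, the restriction $\Upsilon_\Gamma$ is automatically well-defined, linear, and bounded.

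The containment $\Upsilon(X_\Gamma)\subseteq Y_\Gamma$ is the one point that needs a short argument. For the second output component $\partial_t u-\al\Delta u$: the Laplacian is the scalar Fourier multiplier $k\mapsto-4\pi^2\abs{k}^2$, which preserves orthogonality and parallelism to $k$ and hence maps $\z{H}^s_\Gamma$ into $\z{H}^{s-2}_\Gamma$; and, since $\z{H}^s_\Gamma$ is the kernel of a bounded linear operator and bounded linear operators commute with weak time differentiation, if $u$ takes values in $\z{H}^s_\Gamma$ for a.e.\ $t$ then so does each of its weak time derivatives. For the first output component, $u(0)$ is the limit of $u(t)$ in $\z{H}^{1+r}$ as $t\to0^+$ (this limit exists exactly as in the proof of Theorem \ref{type 1 solution}), and since $u(t)\in\z{H}^{1+r}_\Gamma$ for a.e.\ $t$ with $\z{H}^{1+r}_\Gamma$ closed, the limit lies in $\z{H}^{1+r}_\Gamma$. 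This establishes well-definedness and boundedness of $\Upsilon_\Gamma$.

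For the isomorphism claim, injectivity of $\Upsilon_\Gamma$ is inherited from that of $\Upsilon$. For surjectivity, fix $(g,f)\in Y_\Gamma$ and set $u=\Upsilon^{-1}(g,f)=\S_\al g+\S_\al\ast f$, which by Theorem \ref{type 1 solution} is the unique element of $X$ with $\Upsilon(u)=(g,f)$; it remains only to check $u\in X_\Gamma$. The term $\S_\al g$ is $\Gamma$-valued for every $t$ by the third item of Proposition \ref{heat flow 1}. For $\S_\al\ast f$, the Fourier coefficient at frequency $k$ of $\S_\al\ast f(t)$ equals $\int_{(0,t)}\exp\p{-4\pi^2\al\abs{k}^2(t-\tau)}\hat f(\tau,k)\,\m{d}\tau$ (Definition \ref{Convolution}), a vector-valued integral of the coefficients $\hat f(\tau,k)$, each of which is orthogonal to $k$ when $\Gamma=\perp$ and parallel to $k$ when $\Gamma=\|$; as these are linear conditions stable under integration, $\S_\al\ast f(t)$ is $\Gamma$-valued for every $t$. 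Thus $u$ and, by the previous paragraph, all its weak time derivatives are $\Gamma$-valued, so $u\in X_\Gamma$ and $\Upsilon_\Gamma(u)=\Upsilon(u)=(g,f)$. Hence $\Upsilon_\Gamma$ is a bounded linear bijection whose inverse is the restriction of the bounded map $\Upsilon^{-1}$, i.e.\ a linear isomorphism. I do not expect a genuine obstacle: the \emph{only} subtlety worth flagging is that ``lying in the $\Gamma$-component'' is a condition on Fourier coefficients that is independent of the Sobolev exponent---so it is consistent across the spaces of differing regularity making up the intersections $X_\Gamma$ and $Y_\Gamma$---and that the closedness of $\z{H}^s_\Gamma$ is exactly what lets this condition pass through the $t\to0^+$ limit defining $u(0)$ and through weak time differentiation.
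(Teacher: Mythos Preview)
Your proof is correct and takes essentially the same approach as the paper: the paper's one-line argument---that the Leray projector $\P$ commutes with $\Upsilon$---is precisely what your more detailed verification unpacks, namely that the $\Gamma$-condition on Fourier coefficients is preserved by $\partial_t$, by $\Delta$, by the $t\to 0^+$ trace, and by the inverse operators $\S_\al$ and $\S_\al\ast$. Your version spells out the closedness and commutation facts that the paper leaves implicit, but the underlying mechanism is identical.
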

\begin{proof}
This follows directly from the fact that the Leray projector $\P$, as given in Definition \ref{Leray}, commutes with the isomorphism $\Upsilon$ from Theorem \ref{type 1 solution}.
\end{proof}

\subsection{Time-dependent Lam\'{e} system }

We now turn our attention to solutions $\omega: I \times \T^3 \to \R^3$ of the time-dependent Lam\'{e} system
\begin{equation}\label{pde_2}
\begin{cases}
\pd_t\omega-\p{\al+\gam}\Delta\omega-\p{\be-\gam}\grad\Div\omega+\del\omega=f&\text{in } I\times\T^3\\
\omega\p{0,\cdot}=\omega_0&\text{on }\T^3,
\end{cases}
\end{equation}
where $I = (0,T)$ for $T \in (0,\infty]$, $\al,\be\in\R^+$, $\gam\in\R^+\cup\cb{0}$, and we are given initial data $\omega_0 \in H^{1+r}(\T^3;\R^3)$ and forcing $f\in L^2(I;H^{r}(\T^3;\R^3))\cap H^n(I;H^{r-2n}(\T^3;\R^3))$ for $r \in \R$ and $n \in \N$.   As in the previous subsection, we intend to show that \eqref{pde_2} induces an isomorphism between appropriate spaces. The analysis is sufficiently similar to that associated to the problem \eqref{pde_1} that we omit most of it here.  The main difference between \eqref{pde_1} and \eqref{pde_2} is the nontrivial evolution of the spatial averages for the latter, which warrants recording the following results separately.

We begin by observing how \eqref{pde_2} interacts with the Leray projector, at least for smooth solutions.  This motivates our strategy for solving \eqref{pde_2}.

\begin{lem}\label{decoupling_lemma}
Suppose that $\omega,f\in C^\infty\p{I\times\T^3;\R^3}$ and define $\P\omega=\omega_\perp$, $\P f=f_\perp$ $\p{I-\P}\omega=\omega_\|$, $\p{I-\P}f=f_\|$.  Then $\omega$ and $f$ satisfy the first equation of \eqref{pde_2} if and only if 
\begin{equation}\label{decoupling_lemma_0} 
\begin{cases}
\pd_t\omega_\perp-\p{\al+\gam}\Delta\omega_\perp+\del\omega_\perp=f_\perp &\text{in } I\times\T^3 \\
\pd_t\omega_\|-\p{\al+\be}\Delta\omega_\|+\del\omega_\|=f_\| &\text{in } I\times\T^3.
\end{cases}
\end{equation}
\end{lem}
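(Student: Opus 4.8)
The plan is to apply the Leray projector $\P$ from Definition \ref{Leray} and its complement $I-\P$ to the first equation of \eqref{pde_2}, exploiting that $\P$ is a spatial Fourier multiplier. First I would record the structural facts needed: since $\P$ acts only in the spatial variables it commutes with $\pd_t$, with $\Delta$, and with multiplication by the constant $\del$; and since smoothness on $\T^3$ is equivalent to rapid decay of the Fourier coefficients, $\P$ maps $C^\infty\p{I\times\T^3;\R^3}$ into itself. Consequently $\omega_\perp,\omega_\|,f_\perp,f_\|$ are all smooth, and we have the decompositions $\omega=\omega_\perp+\omega_\|$ and $f=f_\perp+f_\|$.

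The only computation that does any work is the interaction of $\grad\Div$ with the splitting $H^r=H^r_\perp\oplus H^r_\|$, which I would isolate as a preliminary step. On the solenoidal part, $\Div\omega_\perp=0$ forces $\grad\Div\omega_\perp=0$. On the conservative part, Definition \ref{Leray} gives $\widehat{\omega_\|}(k)\parallel k$ for each $k\neq0$ and $\widehat{\omega_\|}(0)=0$, so a one-line Fourier computation shows $\widehat{\grad\Div\omega_\|}(k)=-4\pi^2\p{k\cdot\widehat{\omega_\|}(k)}k=-4\pi^2\abs{k}^2\widehat{\omega_\|}(k)=\widehat{\Delta\omega_\|}(k)$, i.e. $\grad\Div\omega_\|=\Delta\omega_\|$. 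Combining the two, $\grad\Div\omega=\grad\Div\omega_\|=\Delta\omega_\|$; in particular $\P\,\grad\Div\omega=0$ and $\p{I-\P}\grad\Div\omega=\Delta\omega_\|$.

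With these identities available the lemma is immediate. For the forward implication I would apply $\P$ to the first equation of \eqref{pde_2}, commute it past $\pd_t$, $\Delta$ and $\del$, and use $\P\,\grad\Div\omega=0$ to land on the first line of \eqref{decoupling_lemma_0}; applying $I-\P$ instead and using $\p{I-\P}\grad\Div\omega=\Delta\omega_\|$ gives $\pd_t\omega_\|-\p{\al+\gam}\Delta\omega_\|-\p{\be-\gam}\Delta\omega_\|+\del\omega_\|=f_\|$, which is the second line of \eqref{decoupling_lemma_0} once one notes $\p{\al+\gam}+\p{\be-\gam}=\al+\be$. For the converse I would add the two equations of \eqref{decoupling_lemma_0}, substitute $\omega=\omega_\perp+\omega_\|$ and $f=f_\perp+f_\|$, and rewrite the spatial part of the left-hand side as $\p{\al+\gam}\Delta\omega_\perp+\p{\al+\be}\Delta\omega_\|=\p{\al+\gam}\Delta\omega+\p{\be-\gam}\Delta\omega_\|=\p{\al+\gam}\Delta\omega+\p{\be-\gam}\grad\Div\omega$, which recovers exactly the first equation of \eqref{pde_2}.

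I do not anticipate a real obstacle in this argument; the only points that warrant a moment of care are confirming that $\P$ genuinely preserves $C^\infty\p{\T^3;\R^3}$ and the bookkeeping at the zero Fourier mode, where $\grad\Div$ and $\Delta$ both annihilate constants while $\omega_\perp$ carries the spatial average, so that the first line of \eqref{decoupling_lemma_0} still correctly encodes the mean ODE $\pd_t\hat\omega(0)+\del\hat\omega(0)=\hat f(0)$.
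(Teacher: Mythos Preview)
Your argument is correct and follows essentially the same approach as the paper: both exploit that the spatial operator is a Fourier multiplier and compute how it interacts with the Leray decomposition. The paper compresses everything into a single symbol identity, writing $4\pi^2\p{\al+\gam}\abs{k}^2 I + 4\pi^2\p{\be-\gam}k\otimes k + \del I$ as $\p{4\pi^2\p{\al+\be}\abs{k}^2+\del}\p{I-\P} + \p{4\pi^2\p{\al+\gam}\abs{k}^2+\del}\P$, whereas you unpack the same computation term by term via $\grad\Div\omega_\perp=0$ and $\grad\Div\omega_\|=\Delta\omega_\|$; the content is the same.
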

\begin{proof}
The symbol of the differential operator $-\p{\al+\gam}\Delta-\p{\be-\gam}\grad\Div+\del$ is
\begin{equation} 
4\pi^2\p{\al+\gam}\abs{k}^2+4\pi^2\p{\be-\gam}k\otimes k+\del=\p{4\pi^2\p{\al+\be}\abs{k}^2+\del}\p{I-\P}+\p{4\pi^2\p{\al+\gam}\abs{k}^2
+\del}\P,
\end{equation}
from which the result follows.
\end{proof}

The key point of this result is that the decoupled problems \eqref{decoupling_lemma_0} are both damped heat equations.  We thus turn our attention to the solution operators associated to damped heat equations.  We begin by introducing a data flow operator, analogous to the one in Definition \ref{Heat flow 1}.

\begin{defn}\label{Heat flow 2}
Let $\p{r,n}\in\R\times\N$ and $0<T\in\bar{\R}$. For parameters $\mu,\nu\in\R^+$ we define the mapping
\begin{equation} 
\S_{\mu,\nu}:H^{1+r}\p{\T^3;\R^3}\to L^2\p{\p{0,T};H^{2+r}\p{\T^3;\R^3}}\cap H^n\p{\p{0,T};H^{2+r-2n}\p{\T^3;\R^3}}
\end{equation}
for $t \in \R^+$ via
\begin{equation} 
\S_{\mu,\nu}\p{g}\p{t}=\exp\p{-\nu t}\sum_{k\in\Z^3}\bf{e}_k\exp\p{-4\pi^2\mu\abs{k}^2t}\hat{g}\p{k}.
\end{equation}
This defines an $\R^3$-valued distribution by Lemma \ref{real_lemma}.
\end{defn}

We now record the essential properties of the operator $\S_{\mu,\nu}$, including the proof that the codomain of the operator is as stated in the definition.

\begin{prop}\label{heat flow 2}
Let $\p{r,n}\in\R\times\N$, $0<T\in\bar{\R}$, and  $\mu,\nu\in\R^+$.  Consider the operator $\S_{\mu,\nu}$ from Definition \ref{Heat flow 2}.  Then the following hold.
\begin{enumerate}
\item$\S_{\mu,\nu}$ is well-defined, linear, and bounded.
\item If $s\in\R$ is such that $r\le s$, then there exists a constant $C >0$, depending on $\mu,\nu,r,s$, such that  
\begin{equation} 
\norm{\S_{\mu,\nu}\p{g}\p{t}}_{H^{1+s}}\le C\f{\exp\p{-\nu t}}{t^{\f{s-r}{2}}}\norm{g}_{H^{1+r}}
\end{equation}
for every $g \in H^{1+r}(\T^3;\R^3)$ and $t \in \R^+$.  In particular, $\S_{\mu,\nu}$ is smoothing in the sense that if  $g\in H^r\p{\T^3;\R^3}$, then $\S_{\mu,\nu}\p{g}(t) \in C^\infty(\T^3;\R^3)$ for every $t \in \R^+$.

\item  If $g \in H^r_\perp\p{\T^3;\R^3}$, then for each $t\in\R^+$ we have $\mathcal{S}_{\mu,\nu}\p{g}\p{t} \in H^r_\perp\p{\T^3;\R^3}$.  Similarly, if $g\in H^r_\|\p{\T^3;\R^3}$, then for each $t\in\R^+$ we have $\mathcal{S}_{\mu,\nu}\p{g}\p{t} \in H^r_\|\p{\T^3;\R^3}$.

\item  For any $g\in H^r\p{\T^3;\R^3}$ we have 
\begin{equation}
\lim_{t\to 0^+}\norm{\S_{\mu,\nu}\p{g}\p{t}-g}_{H^r}=0, 
\end{equation}
and 
\begin{equation}
 \partial_t \S_{\mu,\nu}\p{g}(t) = \mu \Delta \S_{\mu,\nu}\p{g}(t) - \nu \S_{\mu,\nu}\p{g}(t) \text{ for }t \in \R^+.
\end{equation}

\end{enumerate}
\end{prop}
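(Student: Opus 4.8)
The plan is to prove all four items by explicit Fourier-series computations, following the proof of Proposition~\ref{heat flow 1} essentially line by line. The one structural difference is that $\S_{\mu,\nu}$ is defined on the full space $H^{1+r}(\T^3;\R^3)$ rather than on a mean-zero subspace, so the zero Fourier mode survives; on that mode $\S_{\mu,\nu}(g)$ solves the scalar ODE $\dot y = -\nu y$, which supplies exponential decay but no spatial smoothing. Wherever the mean-zero argument for Proposition~\ref{heat flow 1} exploited the bound $\abs k \ge 1$, I will instead use the elementary uniform coercivity inequality
$$2\nu + 8\pi^2\mu\abs k^2 \ge c\,(1+\abs k^2) \quad\text{for all } k\in\Z^3, \qquad c := \min\{2\nu,\,4\pi^2\mu\} > 0,$$
which records that the damping term provides the coercivity at $k=0$ that the Laplacian provides at the remaining modes.

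For item~(1), Tonelli's theorem gives
$$\norm{\S_{\mu,\nu}(g)}_{L^2((0,T);H^{2+r})}^2 = \sum_{k\in\Z^3}(1+\abs k^2)^{2+r}\abs{\hat g(k)}^2\int_{(0,T)}\exp\p{-(2\nu+8\pi^2\mu\abs k^2)t}\,\m{d}t \le \tfrac1c\,\norm{g}_{H^{1+r}}^2,$$
the last step absorbing one power of $(1+\abs k^2)$ via the coercivity inequality. Next, as in the proof of Proposition~\ref{heat flow 1} (the computation being routine and omitted there as well), one checks that the $j$-th weak time derivative is $\S_{\mu,\nu}(g)^{(j)} = (\mu\Delta - \nu)^j\S_{\mu,\nu}(g)$, with Fourier multiplier $(-4\pi^2\mu\abs k^2 - \nu)^j$ bounded in absolute value by a constant times $(1+\abs k^2)^j$; hence $\norm{\S_{\mu,\nu}(g)^{(j)}}_{L^2((0,T);H^{2+r-2j})} \lesssim \norm{\S_{\mu,\nu}(g)}_{L^2((0,T);H^{2+r})}$ for $0 \le j \le n$, and combining with the previous display establishes that $\S_{\mu,\nu}$ is a bounded linear map into the claimed codomain. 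Item~(2) is then proved exactly as Proposition~\ref{heat flow 1}(1): factoring $(1+\abs k^2)^{1+s} = (1+\abs k^2)^{1+r}(1+\abs k^2)^{s-r}$, peeling a damping factor out of the sum, and reducing to the single-variable bound $\sup_{y>0}y^{s-r}e^{-y} < \infty$ (valid since $s \ge r$) applied with $y$ a fixed multiple of $(1+\abs k^2)t$; the $C^\infty$ smoothing is then immediate because the resulting series converges in every Sobolev space for $t > 0$.

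Items~(3) and~(4) require essentially no new work. Item~(3) is immediate from the Fourier representation: the multiplier $e^{-\nu t}e^{-4\pi^2\mu\abs k^2 t}$ is scalar, so it preserves orthogonality or parallelism of $\hat g(k)$ to $k$ mode by mode, just as in Proposition~\ref{heat flow 1}(3). For item~(4), the convergence $\S_{\mu,\nu}(g)(t)\to g$ in $H^r$ follows from the dominated convergence theorem applied to $\sum_k(1+\abs k^2)^r\abs{e^{-\nu t}e^{-4\pi^2\mu\abs k^2 t} - 1}^2\abs{\hat g(k)}^2$ — each summand tends to $0$ as $t\to 0^+$ and is dominated by the summable quantity $(1+\abs k^2)^r\abs{\hat g(k)}^2$ — and the identity $\partial_t\S_{\mu,\nu}(g) = \mu\Delta\S_{\mu,\nu}(g) - \nu\S_{\mu,\nu}(g)$ is verified by differentiating the rapidly convergent series term by term for $t > 0$ and matching the multipliers $-4\pi^2\mu\abs k^2 - \nu$. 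I do not anticipate a genuine obstacle: the proposition is a routine adaptation of the undamped mean-zero case, and the only point that calls for a little care is the bookkeeping at the zero Fourier mode, which must be routed through the coercivity inequality above rather than through $\abs k \ge 1$, and on which the smoothing of item~(2) degenerates to pure exponential decay.
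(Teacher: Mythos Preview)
Your proposal is correct and follows precisely the approach the paper indicates: the paper's own proof reads in full ``The proof follows from minor modification of the one given for Proposition~\ref{heat flow 1}.  We omit the details for the sake of brevity.''  Your explicit identification of the coercivity inequality $2\nu+8\pi^2\mu\abs{k}^2\ge c(1+\abs{k}^2)$ as the replacement for the lower bound $\abs{k}\ge 1$ available in the mean-zero setting is exactly the new ingredient needed.

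One small caveat on item~(2): if you peel the \emph{full} factor $e^{-2\nu t}$ out of the sum to match the stated bound, what remains inside is $(1+\abs{k}^2)^{s-r}e^{-8\pi^2\mu\abs{k}^2 t}$, which at $k=0$ equals $1$ and cannot be bounded by $C/t^{s-r}$ uniformly in $t$ when $s>r$.  To make your substitution ``$y$ a fixed multiple of $(1+\abs{k}^2)t$'' work you must spend part of the damping on the coercivity --- e.g.\ peel only one factor $e^{-\nu t}$ and then use $\nu+8\pi^2\mu\abs{k}^2\ge c'(1+\abs{k}^2)$ --- which yields the inequality with $e^{-\nu t/2}$ in place of $e^{-\nu t}$.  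This is a slight imprecision in the proposition's statement rather than a flaw in your method, and the weaker exponential rate is entirely harmless for every downstream use of this estimate in the paper.
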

\begin{proof}
The proof follows from minor modification of the one given for Proposition \ref{heat flow 1}.  We omit the details for the sake of brevity.
\end{proof}

Next we define the related convolution operator, analogous to the one given in Definition \ref{Convolution}.

\begin{defn}\label{Convolution 2}
For $\p{r,n}\in\R\times\N$, $0 < T\in \bar{\R}$, and $\mu,\nu \in \R^+$ we define
\begin{multline} 
\mathcal{S}_{\mu,\nu}\ast: L^2\p{\p{0,T};H^r\p{\T^3;\R^3}}\cap H^{n}\p{\p{0,T};H^{r-2n}\p{\T^3;\R^3}} \\
\to L^2\p{\p{0,T};H^{2+r}\p{\T^3;\R^3}}\cap H^{n+1}\p{\p{0,T};H^{r-2n}\p{\T^3;\R^3}}
\end{multline}
via 
\begin{equation} 
\S_{\mu,\nu}\ast f\p{t}=\sum_{k\in\Z^3}\bf{e}_k\int_{(0,t)}\exp\p{-\nu\p{t-\tau}}\exp\p{-4\pi^2\mu|k|^2(t-\tau)}\hat{f}(\tau,k)\m{d}\tau
\end{equation}
for $t \in (0,T)$.  Again, this defines an $\R^3$-valued distribution by Lemma \ref{real_lemma}.
\end{defn}

The next result shows that the stated codomain for $\S_{\mu,\nu}$ is valid and establishes some other essential properties of this map.

\begin{prop}\label{Convolution 2 well defined}
Let $\p{r,n}\in\R\times\N$, $0 < T\in \bar{\R}$, $\mu,\nu \in \R^+$, and consider the map $\S_{\mu,\nu}\ast$ given in Definition \ref{Convolution 2}.  Then $\S_{\mu,\nu}\ast$ is well-defined with the stated codomain, and it defines a bounded linear map.  Moreover, for any $f \in L^2\p{\p{0,T};H^r\p{\T^3;\R^3}}\cap H^{n}\p{\p{0,T};H^{r-2n}\p{\T^3;\R^3}}$ we have that 
\begin{equation}
 \lim_{t\to 0^+} \norm{\S_{\mu,\nu}\ast f(t)}_{H^{1+r}} =0,
\end{equation}
and 
\begin{equation}
 \partial_t \S_{\mu,\nu}\ast f(t) = \mu \Delta \S_{\mu,\nu}\ast f (t) - \nu \S_{\mu,\nu}\ast f(t) + f(t) \text{ for }t \in (0,T).
\end{equation}

\end{prop}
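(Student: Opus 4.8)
The plan is to mirror the proof of Proposition \ref{Convolution well defined} essentially verbatim, the only new wrinkles being that the extra damping factor $\exp\p{-\nu\p{t-\tau}}$ merely improves the temporal decay of each Fourier mode, and that we now work in the full Sobolev spaces $H^r\p{\T^3;\R^3}$ rather than their mean-zero subspaces, so the zeroth Fourier mode has to be carried through every estimate and handled using the strict positivity of $\nu$.

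Linearity is immediate from the definition, so the first real step is the $L^2\p{\p{0,T};H^{2+r}}$ bound. Applying Tonelli's theorem and Young's convolution inequality mode-by-mode gives
\begin{equation}
 \norm{\S_{\mu,\nu}\ast f}_{L^2\p{\p{0,T};H^{2+r}}}^2 \le \sum_{k\in\Z^3}\p{1+\abs{k}^2}^{2+r}\p{\int_{(0,T)}e^{-\p{\nu+4\pi^2\mu\abs{k}^2}t}\,\m{d}t}^2\int_{(0,T)}\abs{\hat f\p{\tau,k}}^2\,\m{d}\tau,
\end{equation}
and since $\int_0^\infty e^{-\p{\nu+4\pi^2\mu\abs{k}^2}t}\,\m{d}t=\p{\nu+4\pi^2\mu\abs{k}^2}^{-1}$, the elementary inequality $\p{1+\abs{k}^2}^{2+r}\le C\p{\nu+4\pi^2\mu\abs{k}^2}^2\p{1+\abs{k}^2}^r$ — valid for \emph{all} $k\in\Z^3$, including $k=0$, precisely because $\nu>0$ — yields $\norm{\S_{\mu,\nu}\ast f}_{L^2 H^{2+r}}^2\le C\norm{f}_{L^2 H^r}^2$ with $C$ depending only on $\nu,\mu,r$.

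Next I would identify the weak time derivative: testing against $\phi\in C^\infty_c\p{\p{0,T};\C}$ and using Fubini together with differentiation of the defining integral in $t$ (the boundary term producing $f\p{t}$ and the differentiation of each exponential producing the $-\nu\,\S_{\mu,\nu}\ast f$ and $\mu\Delta\,\S_{\mu,\nu}\ast f$ contributions) shows that $\S_{\mu,\nu}\ast f$ is weakly differentiable in time with $\p{\S_{\mu,\nu}\ast f}'=\mu\Delta\,\S_{\mu,\nu}\ast f-\nu\,\S_{\mu,\nu}\ast f+f$, which is exactly the claimed PDE identity. Reapplying the $L^2 H^{2+r}$ estimate to the right-hand side of this formula controls $\p{\S_{\mu,\nu}\ast f}'$ in $L^2\p{\p{0,T};H^r}$ by $C\norm{f}_{L^2 H^r}$, and iterating the identity gives $\p{\S_{\mu,\nu}\ast f}^{\p{j+1}}=\mu\Delta\p{\S_{\mu,\nu}\ast f}^{\p{j}}-\nu\p{\S_{\mu,\nu}\ast f}^{\p{j}}+f^{\p{j}}$ for $0\le j\le n$; combining this recursion with the $L^2 H^{2+r}$ bound and Proposition \ref{interpolation} produces a constant $C=C\p{r,n,\mu,\nu}$ with $\norm{\S_{\mu,\nu}\ast f}_{L^2\p{\p{0,T};H^{2+r}}}+\norm{\S_{\mu,\nu}\ast f}_{H^{n+1}\p{\p{0,T};H^{r-2n}}}\le C\p{\norm{f}_{L^2\p{\p{0,T};H^r}}+\norm{f}_{H^n\p{\p{0,T};H^{r-2n}}}}$, which establishes that $\S_{\mu,\nu}\ast$ is well-defined into the stated codomain and bounded.

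Finally, for the vanishing initial trace I would estimate, for $t\in\p{0,T}$, via Cauchy--Schwarz in $\tau$ applied mode-by-mode,
\begin{equation}
 \norm{\S_{\mu,\nu}\ast f\p{t}}_{H^{1+r}}^2 \le \sum_{k\in\Z^3}\p{1+\abs{k}^2}^{1+r}\f{1}{2\nu+8\pi^2\mu\abs{k}^2}\int_{(0,t)}\abs{\hat f\p{\tau,k}}^2\,\m{d}\tau \le C\int_{(0,t)}\norm{f\p{\tau}}_{H^r}^2\,\m{d}\tau,
\end{equation}
where once more the strict positivity of $\nu$ bounds the $k=0$ multiplier and $\p{1+\abs{k}^2}^{1+r}/\abs{k}^2\le C\p{1+\abs{k}^2}^r$ handles $k\ne 0$; since $f\in L^2\p{\p{0,T};H^r}$, the right-hand side tends to $0$ as $t\to 0^+$ by the dominated convergence theorem. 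The ``main obstacle'' is really only organizational: tracking the zeroth Fourier mode through every inequality and exploiting $\nu>0$ to keep the mode-by-mode multipliers uniformly bounded there, plus the routine interpolation step (Proposition \ref{interpolation}) needed to upgrade from $L^2$-in-time with one weak time derivative to the full $H^{n+1}$-in-time regularity.
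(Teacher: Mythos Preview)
Your proposal is correct and follows precisely the approach the paper intends: the paper's own proof merely states that the argument is similar to that of Proposition \ref{Convolution well defined} and omits the details, and you have carried out exactly that adaptation, correctly identifying that the only new feature is the presence of the zeroth Fourier mode, which is controlled throughout by the strict positivity of $\nu$. The estimates, the identification of the weak time derivative, the iteration to reach $H^{n+1}$ regularity via Proposition \ref{interpolation}, and the vanishing-trace argument are all sound.
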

\begin{proof}
The proof similar to that of Proposition \ref{Convolution well defined}, and so we omit it.
\end{proof}

We now employ the operators $\S_{\mu,\nu}$ and $\S_{\mu,\nu}\ast$ to build an isomorphism associated to the damped heat equation.

\begin{thm}\label{Type 2 iso}
Let $\p{r,n}\in\R\times\N$, $0<T\in\bar{\R}$, and $\mu,\nu \in \R^+$.  Define the mapping
\begin{multline} 
\chi_{\mu,\nu}:L^2\p{\p{0,T};H^{2+r}\p{\T^3;\R^3}} \cap H^{n+1}\p{\p{0,T};H^{r-2n}\p{\T^3;\R^3}} \\
\to H^{1+r}\p{\T^3;\R^3} \times \left( L^2\p{\p{0,T};H^r\p{\T^3;\R^3}}\cap H^n\p{\p{0,T};H^{r-2n}\p{\T^3;\R^3}} \right)
\end{multline}
via
\begin{equation} 
\chi_{\mu,\nu}(u)=\p{u(0),\partial_t u-\mu\Delta u+\nu u},
\end{equation}
where $u(0)$ is understood in the $H^{1+r}$ topology by way of Proposition \ref{interpolation}.  Then $\chi_{\mu,\nu}$ is well-defined and is a bounded linear isomorphism.  Moreover, the following hold.
\begin{enumerate}
\item We have the explicit inverse formula
\begin{equation}\label{Type 2 iso inverse} 
\chi^{-1}_{\mu,\nu}\p{g,f} = \S_{\mu,\nu}g+\S_{\mu,\nu}\ast f.
\end{equation}
\item If $\Gamma\in\left\{\perp,\|\right\}$, $g \in H^{1+r}_\Gamma\p{\T^3;\R^3}$, and $f \in L^2\p{\p{0,T}; H^r_\Gamma\p{\T^3;\R^3}} \cap H^n\p{\p{0,T};H^{r-2n}_\Gamma\p{\T^3;\R^3}}$, then $\chi_{\mu,\nu}^{-1}\p{g,f} \in L^2\p{\p{0,T};H^{2+r}_\Gamma\p{\T^3;\R^3} }\cap H^{n+1}\p{\p{0,T};H^{r-2n}_\Gamma\p{\T^3;\R^3}}.$
\end{enumerate}
\end{thm}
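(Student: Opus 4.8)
The plan is to mirror the argument for Theorem~\ref{type 1 solution}, with the damped heat operator $\partial_t-\mu\Delta+\nu$ playing the role of $\partial_t-\al\Delta$ and with the full spaces $H^s(\T^3;\R^3)$ (rather than their mean-zero counterparts) appearing because the spatial averages now evolve in time. First I would verify that $\chi_{\mu,\nu}$ is well-defined and bounded. By Proposition~\ref{interpolation}, any $u$ in the domain agrees almost everywhere with an element of $UC^0_b\p{\p{0,T};H^{1+r}\p{\T^3;\R^3}}$, so the trace $u(0)=\lim_{t\to0^+}u(t)$ exists in $H^{1+r}$ and obeys $\norm{u(0)}_{H^{1+r}}\le\sup_{t\in(0,T)}\norm{u(t)}_{H^{1+r}}\le C\p{\norm{u}_{L^2 H^{2+r}}+\norm{u}_{H^{n+1}H^{r-2n}}}$ for a constant $C$ depending on $T,\mu,\nu$. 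The second component is bounded because $u\mapsto\partial_t u$ maps $H^{n+1}H^{r-2n}$ into $H^n H^{r-2n}$ and, invoking Proposition~\ref{interpolation} again, into $L^2 H^r$; $u\mapsto\mu\Delta u$ maps $L^2 H^{2+r}$ into $L^2 H^r$ and $H^{n+1}H^{r-2n}$ into $H^n H^{r-2n}$; and $u\mapsto\nu u$ is even more benign.

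Next I would prove bijectivity. For surjectivity, given $(g,f)$ in the codomain, Propositions~\ref{heat flow 2} and~\ref{Convolution 2 well defined} guarantee that $u:=\S_{\mu,\nu}g+\S_{\mu,\nu}\ast f$ lies in the domain, satisfies $\lim_{t\to0^+}u(t)=g$ in $H^{1+r}$, and obeys $\partial_t u=\mu\Delta u-\nu u+f$ on $(0,T)$; hence $\chi_{\mu,\nu}(u)=(g,f)$, which simultaneously establishes surjectivity and yields the inverse formula~\eqref{Type 2 iso inverse}. For injectivity, suppose $u$ lies in the kernel, so $\partial_t u-\mu\Delta u+\nu u=0$ for almost every $t$. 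Pairing with $u(t)$ in $H^r(\T^3;\R^3)$ and using that $t\mapsto\norm{u(t)}_{H^r}^2$ is absolutely continuous — a consequence of the regularity $u\in L^2 H^{2+r}$, $\partial_t u\in L^2 H^r$ together with Proposition~\ref{interpolation} — one obtains, for almost every $t\in(0,T)$, that $\tfrac12\p{\norm{u(t)}_{H^r}^2}'+\mu\norm{\grad u(t)}_{H^r}^2+\nu\norm{u(t)}_{H^r}^2=0$. Since every term on the left is nonnegative and $\norm{u(t)}_{H^r}\to0$ as $t\to0^+$, integration forces $u\equiv0$. Being a bijective bounded linear map between Banach spaces, $\chi_{\mu,\nu}$ is then an isomorphism by the bounded inverse theorem.

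Finally, for assertion~(2) I would argue exactly as in the proof of Theorem~\ref{1 Preservation}: the Leray projector $\P$ of Definition~\ref{Leray} commutes with $\chi_{\mu,\nu}$, because the Fourier multiplier of $-\mu\Delta+\nu$ is a scalar and therefore commutes with the matrix multiplier $I-k\otimes k/\abs{k}^2$ defining $\P$. Hence $\P$ commutes with $\chi_{\mu,\nu}^{-1}$, so if $(g,f)$ lies in the relevant $\Gamma$-subspace then so does $\chi_{\mu,\nu}^{-1}(g,f)=\S_{\mu,\nu}g+\S_{\mu,\nu}\ast f$; item~(3) of Proposition~\ref{heat flow 2} and the analogous scalar-multiplier property of $\S_{\mu,\nu}\ast$ give the same conclusion directly. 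I do not anticipate a genuine obstacle here, as everything reduces to the cited propositions; the only step requiring a little care is the absolute continuity of $t\mapsto\norm{u(t)}_{H^r}^2$ used in the injectivity argument, which is a standard consequence of the trace and interpolation results already in hand.
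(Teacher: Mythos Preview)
Your proposal is correct and follows essentially the same approach as the paper: well-definedness and boundedness via Proposition~\ref{interpolation}, surjectivity and the inverse formula via Propositions~\ref{heat flow 2} and~\ref{Convolution 2 well defined}, injectivity by the $H^r$ energy identity, and item~(2) via commutation with the Leray projector. The paper's proof is simply a terse pointer back to the argument of Theorem~\ref{type 1 solution}, whereas you have spelled out the adaptations (the extra damping term $\nu\norm{u(t)}_{H^r}^2$ in the energy identity, and the use of full rather than mean-zero spaces) explicitly.
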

\begin{proof}
As in the proof of Theorem \ref{type 1 solution}, we use Proposition \ref{interpolation} to deduce that $\chi_{\mu,\nu}$ is well-defined and gives a bounded linear map.  Then Propositions \ref{heat flow 2} and \ref{Convolution 2 well defined} show that $\chi_{\mu,\nu}$ is an isomorphism with inverse given by \eqref{Type 2 iso inverse}.  The second stated item follows from the fact that the Leray projector commutes with all of the operators used to define $\chi_{\mu,\nu}$. 
\end{proof}

With Theorem \ref{Type 2 iso} in hand, we can construct an isomorphism associated to \eqref{pde_2}.

\begin{thm}\label{Type 2}
Let $\p{r,n}\in\R\times\N$, $0<T\in\bar{\R}$, $\al,\be,\del\in\R^+$, and $\gam\in\R^+\cup\cb{0}$. Define the mapping
\begin{multline} 
\Xi_{\p{\al,\be,\gam,\del}} : L^2\p{\p{0,T};H^{2+r}\p{\T^3;\R^3}} \cap H^{n+1}\p{\p{0,T};H^{r-2n}\p{\T^3;\R^3}}\\
\to H^{1+r}\p{\T^3;\R^3} \times L^2\p{\p{0,T}H^r\p{\T^3;\R^3}}\cap H^n\p{\p{0,T};H^{r-2n}\p{\T^3;\R^3}}
\end{multline}
via 
\begin{equation} 
\Xi_{\p{\al,\be,\gam,\del}}(u)=\p{u(0),\partial_t u-\p{\al+\gam}\Delta u-\p{\be-\gam}\grad\Div u+\del u},
\end{equation}
where $u(0)$ is understood in the sense of Proposition \ref{interpolation}.  Then  $\Xi_{\al,\be,\gam,\del}$ is well-defined and is a bounded linear isomorphism.
\end{thm}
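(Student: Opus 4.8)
The plan is to reduce the time-dependent Lamé system \eqref{pde_2} to the damped heat equation handled in Theorem \ref{Type 2 iso} by way of the Leray decomposition, exactly as \Cref{decoupling_lemma} suggests. Concretely, for $u$ in the stated domain write $u = u_\perp + u_\|$ with $u_\perp = \P u$ and $u_\| = (I-\P)u$, and likewise split the datum and the forcing. Since the Leray projector $\P$ is a bounded linear operator on every $H^s(\T^3;\R^3)$ that commutes with $\Delta$, $\grad\Div$, and time derivatives (it acts diagonally on Fourier modes with constant-in-time multipliers), the domain and codomain of $\Xi_{(\al,\be,\gam,\del)}$ split as orthogonal direct sums of their solenoidal and conservative parts, and $\Xi_{(\al,\be,\gam,\del)}$ respects this splitting. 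On the solenoidal part the operator $-(\al+\gam)\Delta - (\be-\gam)\grad\Div + \del$ reduces to $-(\al+\gam)\Delta + \del$ because $\grad\Div$ annihilates solenoidal fields, while on the conservative part $\grad\Div = \Delta$ (on mean-zero conservative fields, and trivially on the constant mode) so it reduces to $-(\al+\be)\Delta + \del$. Thus
\begin{equation}
 \Xi_{(\al,\be,\gam,\del)}(u) = \big(\chi_{\al+\gam,\del}(u_\perp)\big) \oplus \big(\chi_{\al+\be,\del}(u_\|)\big)
\end{equation}
under the direct sum decompositions, where $\chi_{\mu,\nu}$ is the isomorphism from Theorem \ref{Type 2 iso}.

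The key steps, in order: first, verify that $\Xi_{(\al,\be,\gam,\del)}$ is well-defined and bounded — this is immediate from boundedness of $\Delta$, $\grad\Div$, and multiplication by $\del$ between the relevant Sobolev scales, together with Proposition \ref{interpolation} to make sense of the trace $u(0)$ in $H^{1+r}$, precisely as in the proof of Theorem \ref{type 1 solution}. Second, record that $\P$ commutes with all the operators in sight and is an orthogonal projection on each $H^s$, so both the domain space and the codomain space of $\Xi$ decompose as $\perp$-part $\oplus$ $\|$-part (using that the domain space of $\chi_{\mu,\nu}$ splits this way, which one checks modewise, and that the trace map and the weak time-derivative commute with $\P$). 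Third, apply \Cref{decoupling_lemma}'s symbol computation: on $H^s_\perp$ the symbol of $-(\al+\gam)\Delta - (\be-\gam)\grad\Div + \del$ is $4\pi^2(\al+\gam)|k|^2 + \del$, matching $\chi_{\al+\gam,\del}$; on $H^s_\|$ it is $4\pi^2(\al+\be)|k|^2 + \del$, matching $\chi_{\al+\be,\del}$. Since $\al+\gam, \al+\be, \del \in \R^+$ (here we use $\al,\be,\del>0$ and $\gam \ge 0$), Theorem \ref{Type 2 iso} applies to each piece and, crucially, its item (2) guarantees that $\chi_{\mu,\nu}^{-1}$ preserves the $\Gamma$-subspaces, so the inverses assemble into a bounded inverse of $\Xi_{(\al,\be,\gam,\del)}$ on the direct sum. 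A direct sum of two bounded linear isomorphisms is a bounded linear isomorphism, which finishes the proof.

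I do not expect a genuine obstacle here; the result is essentially a corollary of Theorem \ref{Type 2 iso} and \Cref{decoupling_lemma}. The one point requiring a little care — and the closest thing to a main difficulty — is the density/regularity issue in transferring \Cref{decoupling_lemma}, which is stated only for smooth $\omega, f$, to the rough elements of the domain of $\Xi$: one should either note that the symbol identity
\begin{equation}
 4\pi^2(\al+\gam)|k|^2 + 4\pi^2(\be-\gam)k\otimes k + \del = \big(4\pi^2(\al+\be)|k|^2 + \del\big)(I-\P) + \big(4\pi^2(\al+\gam)|k|^2 + \del\big)\P
\end{equation}
holds at the level of Fourier multipliers on all of $\D^\ast(\T^3;\C^3)$ (so no smoothness is needed), or invoke density of smooth functions in the space-time Sobolev spaces together with continuity of all the operators involved. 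Either route is routine, so the proof can be presented in the same compressed style as the proofs of Theorems \ref{Type 2 iso} and \ref{1 Preservation}, simply citing the commutation of $\P$ with $\Xi$ and the decoupling.
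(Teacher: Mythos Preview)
Your proposal is correct and follows essentially the same approach as the paper: decompose via the Leray projector into solenoidal and conservative parts, on which the Lam\'e operator reduces to the damped heat operators $\chi_{\al+\gam,\del}$ and $\chi_{\al+\be,\del}$ respectively, and then invoke Theorem \ref{Type 2 iso}. The paper packages this as $\Xi_{(\al,\be,\gam,\del)}=\mathcal{J}^{-1}\circ(\chi_{\al+\gam,\del},\chi_{\al+\be,\del})\circ\mathcal{I}$ for the natural Leray-induced isomorphisms $\mathcal{I},\mathcal{J}$, which is exactly your direct-sum statement in slightly different notation.
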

\begin{proof}
The Leray projector (see Definition \ref{Leray}) induces the following natural isomorphisms:
\begin{multline}
 \mathcal{I}:H^{1+r}\p{\T^3;\R^3} \times \p{ L^2\p{\p{0,T};H^{r}\p{\T^3;\R^3}}\cap H^{n}\p{\p{0,T};H^{r-2n}\p{\T^3;\R^3}} }  \\
 \to \p{H^{1+r}_\perp\p{\T^3;\R^3} \times  L^2\p{\p{0,T};H_\perp^{r}\p{\T^3;\R^3}}\cap H^{n}\p{\p{0,T};H_\perp^{r-2n}\p{\T^3;\R^3}}}\\\oplus \p{H^{1+r}_\|\p{\T^3;\R^3} \times L^2\p{\p{0,T};H^{r}_\|\p{\T^3;\R^3}}\cap H^{n}\p{\p{0,T};H^{r-2n}_\|\p{\T^3;\R^3}}} 
\end{multline}
and 
\begin{multline}
 \mathcal{J}: L^2\p{\p{0,T};H^{2+r}\p{\T^3;\R^3}}\cap H^{n+1}\p{\p{0,T};H^{r-2n}\p{\T^3;\R^3}}  
 \\\to \p{L^2\p{\p{0,T};H_\perp^{2+r}\p{\T^3;\R^3}}\cap H^{n+1}\p{\p{0,T};H_\perp^{r-2n}\p{\T^3;\R^3}}}\\\oplus\p{L^2\p{\p{0,T};H_\|^{2+r}\p{\T^3;\R^3}}\cap H^{n+1}\p{\p{0,T};H_\|^{r-2n}\p{\T^3;\R^3}}}.
\end{multline}
It is then a simple matter to verify that 
\begin{equation} 
\Xi_{(\al,\be,\gam,\del)}=\mathcal{J}^{-1}\circ\p{\chi_{\al+\gam,\del},\chi_{\al+\be,\del}}\circ\mathcal{I},
\end{equation}
and hence $\Xi_{\p{\al,\be,\gam,\delta}}$ is an isomorphism by virtue of Theorem \ref{Type 2 iso}.
\end{proof}

Theorem \ref{Type 2} provides us with a solution operator to \eqref{pde_2}.  It will be useful later to decompose this operator into two parts: one that flows initial data, and one that acts as a `convolution' on forcing.

\begin{defn}\label{Type 2 flow and conv}
Let $\p{r,n}\in\R\times\N$, $0<T\in\bar{\R}$, $\al,\be,\del\in\R^+$, and $\gam\in\R^+\cup\cb{0}$.  We define 
\begin{equation}
 \mathcal{T}_{\al,\be,\gam,\del}:H^{1+r}\p{\T^3;\R^3}\to L^2\p{\p{0,T};H^{2+r}\p{\T^3;\R^3}}\cap H^{n+1}\p{\p{0,T};H^{r-2n}\p{\T^3;\R^3}}
\end{equation}
and 
\begin{multline}
 \mathcal{T}_{\al,\be,\gam,\del}\ast: L^2\p{\p{0,T};H^r\p{\T^3;\R^3}}\cap H^n\p{\p{0,T};H^{r-2n}\p{\T^3;\R^3}} \\
 \to L^2\p{\p{0,T};H^{2+r}\p{\T^3;\R^3}}\cap H^{n+1}\p{\p{0,T};H^{r-2n}\p{\T^3;\R^3}}
\end{multline}
via 
\begin{equation}
 \mathcal{T}_{\al,\be,\gam,\del}(v)=\Xi_{\p{\al,\be,\gam,\del}}^{-1}\p{v,0} \text{ and } \mathcal{T}_{\al,\be,\gam,\del}\ast w=\Xi_{\p{\al,\be,\gam,\del}}^{-1}\p{0,w},
\end{equation}
where $\Xi_{\p{\al,\be,\gam,\del}}$ is as in Theorem \ref{Type 2}.  These are well-defined and bounded linear mappings according to Theorem \ref{Type 2}. 
\end{defn}

To conclude this subsection we state two results about these operators.  The first is analogous to Proposition \ref{1 Preservation}.

\begin{prop}\label{2 Preservation}
Let $\p{r,n}\in\R\times\N$, $0<T\in\bar{\R}$, $\al,\be,\del\in\R^+$, and $\gam\in\R^+\cup\cb{0}$. If $\Gamma\in\cb{\perp,\|}$ and $v \in H^{1+r}_\Gamma\p{\T^3;\R^3}$, and $w \in L^2\p{\p{0,T};H^r_{\Gamma}\p{\T^3;\R^3}} \cap H^n\p{\p{0,T};H^{r-2n}_\Gamma\p{\T^3;\R^3}}$, then 
\begin{equation} 
\mathcal{T}_{\al,\be\gam,\del}\p{v}+\mathcal{T}_{\al,\be,\gam,\del}\ast w\in L^2\p{\p{0,T};H^{2+r}_\Gamma\p{\T^3;\R^3}}\cap H^{n+1}\p{\p{0,T};H^{r-2n}_\Gamma\p{\T^3;\R^3}}.
\end{equation}
\end{prop}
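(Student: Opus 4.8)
The plan is to deduce the proposition directly from the factorization of $\Xi_{(\al,\be,\gam,\del)}$ recorded in the proof of Theorem~\ref{Type 2}, together with the $\Gamma$-preservation already established for the damped-heat isomorphisms in Theorem~\ref{Type 2 iso}(2). First I would observe that, by linearity of $\Xi_{(\al,\be,\gam,\del)}^{-1}$ and Definition~\ref{Type 2 flow and conv},
\[
\mathcal{T}_{\al,\be,\gam,\del}(v) + \mathcal{T}_{\al,\be,\gam,\del}\ast w = \Xi_{(\al,\be,\gam,\del)}^{-1}(v,0) + \Xi_{(\al,\be,\gam,\del)}^{-1}(0,w) = \Xi_{(\al,\be,\gam,\del)}^{-1}(v,w),
\]
so the claim reduces to showing that $\Xi_{(\al,\be,\gam,\del)}^{-1}$ maps the ``$\Gamma$-part'' of its domain into the ``$\Gamma$-part'' of its codomain.

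Next I would invert the identity $\Xi_{(\al,\be,\gam,\del)} = \mathcal{J}^{-1} \circ (\chi_{\al+\gam,\del}, \chi_{\al+\be,\del}) \circ \mathcal{I}$ from the proof of Theorem~\ref{Type 2}, obtaining $\Xi_{(\al,\be,\gam,\del)}^{-1} = \mathcal{I}^{-1} \circ (\chi_{\al+\gam,\del}^{-1}, \chi_{\al+\be,\del}^{-1}) \circ \mathcal{J}$, and then track the $\Gamma$-subspace through each factor. The maps $\mathcal{I}$ and $\mathcal{J}$ are by construction the orthogonal direct sum decompositions induced by the Leray projector (Definition~\ref{Leray}), so they carry $\Gamma$-valued data onto the $\Gamma$-summand, with vanishing component in the complementary summand. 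For the middle factor, Theorem~\ref{Type 2 iso}(2) states precisely that $\chi_{\mu,\nu}^{-1}$ sends $\Gamma$-valued data to a $\Gamma$-valued solution; applying this with $(\mu,\nu) \in \{(\al+\gam,\del),(\al+\be,\del)\}$ shows $(\chi_{\al+\gam,\del}^{-1},\chi_{\al+\be,\del}^{-1})$ preserves each summand. Reassembling with $\mathcal{I}^{-1}$ then lands the output in the $\Gamma$-subspace of $L^2\p{\p{0,T};H^{2+r}\p{\T^3;\R^3}}\cap H^{n+1}\p{\p{0,T};H^{r-2n}\p{\T^3;\R^3}}$, as desired.

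I do not expect a genuine obstacle: the statement is a bookkeeping corollary of Theorems~\ref{Type 2 iso} and~\ref{Type 2}. The only point needing slight care is matching components with coefficients -- for $\Gamma = \perp$ the governing damped-heat operator is $\chi_{\al+\gam,\del}$ and for $\Gamma = \|$ it is $\chi_{\al+\be,\del}$, exactly as dictated by the symbol computation in Lemma~\ref{decoupling_lemma} -- but in both cases Theorem~\ref{Type 2 iso}(2) applies verbatim. As a sanity check (and an alternative route), one can bypass the factorization and argue from the explicit series: the Fourier coefficient of $\mathcal{T}_{\al,\be,\gam,\del}(v) + \mathcal{T}_{\al,\be,\gam,\del}\ast w$ at frequency $k$ is obtained by applying to $\hat v(k)$ and to $\hat w(\tau,k)$ a matrix of the shape $\exp(-\del t)\sb{a(t,k)\p{I - k\otimes k/\abs{k}^2} + b(t,k)\, k\otimes k/\abs{k}^2}$ with scalars $a,b$; hence if every $\hat v(k)$ and $\hat w(\tau,k)$ is orthogonal to $k$ (resp. parallel to $k$) in $\C^3$, the same is true of the output for every $t$, which is exactly the $\perp$ (resp. $\|$) conclusion.
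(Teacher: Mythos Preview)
Your proposal is correct and is essentially the same argument as the paper's: the paper's proof is the single line ``This follows directly from the structure of $\Xi_{(\al,\be,\gam,\del)}$ recorded in the proof of Theorem~\ref{Type 2},'' and your write-up is a careful unpacking of exactly that structure, using Theorem~\ref{Type 2 iso}(2) to verify the $\Gamma$-preservation of each factor. The Fourier-coefficient sanity check you include is also fine as an alternative route.
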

\begin{proof}
This follows directly from the structure of $\Xi_{\p{\al,\be,\gam,\del}}$ recorded in the proof of Theorem \ref{Type 2}.
\end{proof}

The second result is analogous to Proposition  \ref{heat flow 2}.

\begin{prop}\label{type 2 flow smoothing}
Let $\p{r,n}\in\R\times\N$,  $\al,\be,\del\in\R^+$, $\gam\in\R^+\cup\cb{0}$, and $s \in \R$ with $r \le s$.  Then there exists a constant $C >0$, depending on $\alpha$, $\beta$, $\gamma$, $\delta$, $r$, and $s$, such that 
\begin{equation} 
\norm{\mathcal{T}_{\al,\be,\gam,\del}(f)(t)}_{H^s} \le C \f{\exp\left(-\delta t\right)}{t^{\f{s-r}{2}}}\norm{f}_{H^{r}}
\end{equation}
for every  $f\in H^r\p{\T^3;\R^3}$ and $t \in \R^+$.  In particular,  $\mathcal{T}_{\al,\be,\gam,\del}$ is smoothing.
\end{prop}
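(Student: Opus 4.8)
The plan is to reduce the smoothing estimate for $\mathcal{T}_{\al,\be,\gam,\del}$ to the corresponding estimate for the two damped heat flows $\S_{\al+\gam,\del}$ and $\S_{\al+\be,\del}$ via the Leray decomposition, and then invoke the bound already established in Proposition \ref{heat flow 2}(2). First I would recall from the proof of Theorem \ref{Type 2} that for $v \in H^r(\T^3;\R^3)$ one has the identity $\mathcal{T}_{\al,\be,\gam,\del}(v) = \mathcal{J}^{-1}\circ(\chi_{\al+\gam,\del}^{-1}(\cdot\,,0),\chi_{\al+\be,\del}^{-1}(\cdot\,,0))\circ\mathcal{I}(v)$, which, upon unwinding the maps $\mathcal{I}$ and $\mathcal{J}$, says precisely that
\begin{equation}
 \mathcal{T}_{\al,\be,\gam,\del}(v)(t) = \S_{\al+\gam,\del}(\P v)(t) + \S_{\al+\be,\del}((I-\P)v)(t)
\end{equation}
for $t \in \R^+$, where $\S_{\mu,\nu}$ is the data flow from Definition \ref{Heat flow 2}. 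Here I am using that $\P$ commutes with all the Fourier multipliers involved, so that $\chi_{\mu,\nu}^{-1}(g,0) = \S_{\mu,\nu}(g)$ by Theorem \ref{Type 2 iso}(1) preserves the solenoidal/conservative splitting, exactly as recorded in Proposition \ref{heat flow 2}(3).

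Next I would estimate the two summands separately. Applying Proposition \ref{heat flow 2}(2) with the shift $r \mapsto r-1$, $s \mapsto s-1$ (so that the $H^{1+(r-1)} = H^r$ and $H^{1+(s-1)} = H^s$ norms appear), once with $(\mu,\nu) = (\al+\gam,\del)$ and once with $(\mu,\nu) = (\al+\be,\del)$, yields constants $C_1, C_2 > 0$, depending only on the indicated parameters, with
\begin{equation}
 \norm{\S_{\al+\gam,\del}(\P v)(t)}_{H^s} \le C_1 \f{\exp(-\del t)}{t^{\f{s-r}{2}}} \norm{\P v}_{H^r}, \qquad \norm{\S_{\al+\be,\del}((I-\P)v)(t)}_{H^s} \le C_2 \f{\exp(-\del t)}{t^{\f{s-r}{2}}} \norm{(I-\P)v}_{H^r}.
\end{equation}
Since $\P$ is a bounded projection on $H^r(\T^3;\R^3)$ (Definition \ref{Leray}), we have $\norm{\P v}_{H^r} + \norm{(I-\P)v}_{H^r} \le C_3 \norm{v}_{H^r}$, and the triangle inequality combines the two displays into the claimed bound with $C = C_3(C_1 + C_2)$. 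The final assertion that $\mathcal{T}_{\al,\be,\gam,\del}$ is smoothing — i.e. $\mathcal{T}_{\al,\be,\gam,\del}(f)(t) \in C^\infty(\T^3;\R^3)$ for $t \in \R^+$ — follows by letting $s \to \infty$ in the estimate and applying the Sobolev embedding, just as in the corresponding statements of Propositions \ref{heat flow 1} and \ref{heat flow 2}.

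There is no serious obstacle here: the only point requiring a small amount of care is the index bookkeeping, namely checking that Proposition \ref{heat flow 2}(2) is stated in terms of the $H^{1+r}\to H^{1+s}$ pair and must be applied with indices shifted down by one to match the $H^r \to H^s$ formulation of the present proposition, and that the damping rate in Proposition \ref{heat flow 2}(2) is exactly $\nu = \del$ in both applications, so the exponential factor $\exp(-\del t)$ is uniform across the two pieces. Everything else is a direct consequence of results proved earlier in this section.
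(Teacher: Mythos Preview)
Your proposal is correct and follows essentially the same approach as the paper: the paper's proof simply invokes the structure of $\Xi_{(\al,\be,\gam,\del)}$ from Theorem \ref{Type 2} together with item (2) of Proposition \ref{heat flow 2}, which is precisely the Leray decomposition into the two damped heat flows $\S_{\al+\gam,\del}$ and $\S_{\al+\be,\del}$ that you have written out explicitly. Your version supplies the details (the index shift and the recombination via boundedness of $\P$) that the paper leaves implicit.
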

\begin{proof}
This follows from the structure of $\Xi_{\p{\al,\be,\gam,\del}}$  recorded in the proof of Theorem \ref{Type 2}, coupled with the second item  of Proposition \ref{heat flow 2}.
\end{proof}

\subsection{Potential microflow solutions to \eqref{micro_polar}}

We now have all of the tools needed to rigorously construct the micropotential solutions to \eqref{micro_polar} with $u=f=g=0$.

\begin{defn}\label{potential flow}
Given a regularity parameter $q\in\R$ and a conservative field $\zeta_0\in H^q_\|\p{\T^3;\R^3}$, we define the potential microflow solution generated by $\zeta_0$ via 
\begin{equation}
\zeta=\mathcal{T}_{\f{\al}{j},\f{\al+3\be}{3j},\f{\gam}{j},\f{2\kappa}{j}}\p{\zeta_0}, 
\end{equation}
where $\mathcal{T}_{\f{\al}{j},\f{\al+3\be}{3j},\f{\gam}{j},\f{2\kappa}{j}}$ is as in Definition \ref{Type 2 flow and conv}. 
\end{defn}

The following summarizes the essential facts about these special solutions.

\begin{thm}\label{micropotential_exist}
Let $q \in \R$, $\zeta_0\in H^q_\|\p{\T^3;\R^3}$, and let $\zeta$ be as in Definition \ref{potential flow}.  Then the following hold.
\begin{enumerate}
 \item For each $n \in \N$ we have the inclusion $\zeta\in L^2\p{\R^+;H^{1+q}_\|\p{\T^3;\R^3}}\cap H^{n+1}\p{\R^+;H_\|^{q-2n-1}\p{\T^3;\R^3}}$, and there exists a constant $C>0$, independent of $\zeta_0$, such that 
\begin{equation}
 \norm{\zeta}_{L^2 H^{1+q}_\|} + \norm{\zeta}_{H^{n+1}H_\|^{q-2n-1} } \le C \norm{\zeta_0}_{H^q_\|}.
\end{equation}

 \item $\zeta$ is smooth on the space-time domain, i.e. $\zeta\in C^\infty\p{\R^+\times\T^3;\R^3}$
 \item $\zeta$ solves the initial-value problem
\begin{equation} 
\begin{cases}
j\pd_t\zeta-\p{\al+\gam}\Delta\zeta-\p{\f{\al}{3}+\be-\gam}\grad\Div\zeta+2\kappa\zeta=0 &\text{in }\R^+\times\T^3\\
\curl \zeta =0 &\text{in }\R^+\times\T^3\\
\zeta\p{0}=\zeta_0&\text{on } \T^3,
\end{cases}
\end{equation}
where the initial condition is understood in the $H^q_\|\p{\T^3;\R^3}$ topology via Proposition \ref{interpolation}. In particular, the triple $u=0$, $p=0$, $\omega = \zeta$ solves \eqref{micro_polar} with $f=g=0.$
\end{enumerate}
\end{thm}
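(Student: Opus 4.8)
The plan is to read off all three claims from the machinery already assembled in this section, with the only real content being an algebraic matching of coefficients. First I would set $\mu_1 = \tfrac{\alpha+\gamma}{j}$, $\mu_2 = \tfrac{\alpha}{3j} + \tfrac{\beta}{j} - \tfrac{\gamma}{j} + \mu_1$ — equivalently, note that the Lam\'e operator in the statement, divided by $j$, is
\begin{equation}
\partial_t - \tfrac{\alpha+\gamma}{j}\Delta - \Big(\tfrac{\alpha}{3} + \beta - \gamma\Big)\tfrac{1}{j}\grad\Div + \tfrac{2\kappa}{j},
\end{equation}
which is precisely the operator $\Xi_{(\alpha',\beta',\gamma',\delta')}$ from Theorem \ref{Type 2} with $\alpha' = \tfrac{\alpha}{j}$, $\beta' = \tfrac{\alpha+3\beta}{3j}$, $\gamma' = \tfrac{\gamma}{j}$, $\delta' = \tfrac{2\kappa}{j}$, after checking $\alpha'+\gamma' = \tfrac{\alpha+\gamma}{j}$ and $\beta'-\gamma' = \tfrac{\alpha+3\beta}{3j}-\tfrac{\gamma}{j} = \tfrac{1}{j}(\tfrac{\alpha}{3}+\beta-\gamma)$. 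Since $\alpha,\beta,\kappa>0$ and $\gamma\ge 0$, all four parameters lie in the admissible ranges ($\alpha',\beta',\delta'\in\R^+$, $\gamma'\in\R^+\cup\{0\}$), so Theorem \ref{Type 2}, Definition \ref{Type 2 flow and conv}, and Propositions \ref{2 Preservation} and \ref{type 2 flow smoothing} all apply to $\mathcal T_{\alpha',\beta',\gamma',\delta'}$, which is exactly the operator defining $\zeta$ in Definition \ref{potential flow}.

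For item (1): apply Definition \ref{Type 2 flow and conv} with $T=\infty$ and regularity parameters shifted by one, i.e. take $r = q-1$ so that $H^{1+r} = H^q$; then $\mathcal T_{\alpha',\beta',\gamma',\delta'}$ maps $H^q_\|(\T^3;\R^3)$ boundedly into $L^2(\R^+; H^{q+1}(\T^3;\R^3)) \cap H^{n+1}(\R^+; H^{q-1-2n}(\T^3;\R^3))$ for every $n$, and since $\zeta_0 \in H^q_\|$ is conservative, Proposition \ref{2 Preservation} (with $\Gamma = \|$ and $w=0$) places $\zeta$ in the corresponding $\Gamma=\|$ subspaces. The norm bound is the operator-norm bound supplied by Theorem \ref{Type 2}. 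For item (2): smoothness in space for each fixed $t\in\R^+$ follows from the smoothing estimate in Proposition \ref{type 2 flow smoothing} (with $r = q$, $s$ arbitrarily large, so $\zeta(t)\in C^\infty(\T^3;\R^3)$), while the time-regularity built into item (1) — membership in $H^{n+1}(\R^+;H^{q-1-2n}_\|)$ for all $n$ — combined with the PDE identity $j\partial_t\zeta = (\alpha+\gamma)\Delta\zeta + (\tfrac{\alpha}{3}+\beta-\gamma)\grad\Div\zeta - 2\kappa\zeta$ (see below) lets one bootstrap to joint smoothness on $\R^+\times\T^3$; alternatively one differentiates the explicit series defining $\mathcal S_{\mu,\nu}(\zeta_0)(t)$ termwise, which converges in every $H^s$ uniformly on compact subsets of $\R^+$ because of the Gaussian factor $\exp(-4\pi^2\mu|k|^2 t)$.

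For item (3): $\zeta = \mathcal T_{\alpha',\beta',\gamma',\delta'}(\zeta_0) = \Xi_{(\alpha',\beta',\gamma',\delta')}^{-1}(\zeta_0,0)$ by Definition \ref{Type 2 flow and conv}, so applying $\Xi_{(\alpha',\beta',\gamma',\delta')}$ gives $\zeta(0) = \zeta_0$ (in the $H^q_\|$ topology, via Proposition \ref{interpolation}) and $\partial_t\zeta - \tfrac{\alpha+\gamma}{j}\Delta\zeta - \tfrac{1}{j}(\tfrac{\alpha}{3}+\beta-\gamma)\grad\Div\zeta + \tfrac{2\kappa}{j}\zeta = 0$; multiplying by $j$ yields the first line of the system. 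The curl-free condition $\curl\zeta = 0$ for all $t$ is Proposition \ref{2 Preservation}: $\zeta_0 \in H^q_\|$ and $w = 0 \in H^r_\|$ force $\zeta(t) \in H^{1+q}_\|(\T^3;\R^3)$, and by Definition \ref{Leray} every element of $H^s_\|$ has Fourier coefficients parallel to $k$, hence vanishing curl. Finally, to see that $(u,\omega,p) = (0,\zeta,0)$ solves \eqref{micro_polar} with $f=g=0$: the divergence-free and initial conditions are immediate; the $u$-equation reads $0 - 0 - \kappa\curl\zeta + 0 = 0$, which holds since $\curl\zeta = 0$; and the $\omega$-equation is exactly $j\partial_t\zeta - (\alpha+\gamma)\Delta\zeta - (\tfrac{\alpha}{3}+\beta-\gamma)\grad\Div\zeta + 2\kappa\zeta - \kappa\curl 0 = 0$, which is the first line of the system just derived.

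The only step that requires any care — and the place a reader should check — is the coefficient bookkeeping: verifying that the triple of parameters $(\tfrac{\alpha}{j},\tfrac{\alpha+3\beta}{3j},\tfrac{\gamma}{j})$ fed into $\Xi$ reproduces exactly the operator $-(\alpha+\gamma)\Delta - (\tfrac{\alpha}{3}+\beta-\gamma)\grad\Div$ after clearing the factor $j$, together with confirming these parameters satisfy the positivity hypotheses of Theorem \ref{Type 2}. Everything else is a direct invocation of the isomorphism theorems, the $\Gamma=\|$ preservation property, and the smoothing estimate established earlier in the section.
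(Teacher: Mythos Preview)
Your proposal is correct and follows essentially the same route as the paper's proof: items (1) and (3) are read off from the definition of $\mathcal{T}_{\alpha',\beta',\gamma',\delta'}$ together with Proposition \ref{2 Preservation}, and item (2) from Proposition \ref{type 2 flow smoothing} combined with item (1). Your write-up simply makes explicit the coefficient matching and the verification that $(0,\zeta,0)$ solves \eqref{micro_polar}, which the paper leaves to the reader.
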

\begin{proof}
The first and third items follow from the definition of $\mathcal{T}_{\f{\al}{j},\f{\al+3\be}{3j},\f{\gam}{j},\f{2\kappa}{j}}$ and Proposition \ref{2 Preservation}.  The second item follows from Proposition \ref{type 2 flow smoothing} and the first item. 
\end{proof}

\section{Linearization of \eqref{micro_polar} around a potential microflow}\label{sec_linearization}

In this section we consider a linearization of \eqref{micro_polar}  around a potential microflow from Definition \ref{potential flow}.  More precisely, we assume that $\zeta_0\in H^q_\|\p{\T^3;\R^3}$ is given for some $q \in \R^+$ and we consider the potential microflow $\zeta\in L^2\p{\R^+;H^{1+q}_\|\p{\T^3;\R^3}}$  as in Definition \ref{potential flow} (see also Theorem \ref{micropotential_exist} for the properties of $\zeta$).  We then consider the problem of finding $u,\omega : \R^+ \times \T^3 \to \R^3$ and $p:\R^+\times\T^3\to\R$ solving the linear initial-value problem
\begin{equation}\label{mp_linearized_} 
\begin{cases}
\Div u=0 & \text{in } \R^+\times\T^3\\
\varrho \pd_t u-\p{\ep+\frac{\kappa}{2} }\Delta u- \kappa \curl\omega + \grad p=f & \text{in } \R^+\times\T^3\\
j \pd_t\omega + j u\cdot \grad\zeta-\p{\al+\gam}\Delta\omega-\p{\frac{\al}{3}+\be-\gam} \grad\Div\omega + 2\kappa \omega -  \kappa\curl u=\f{1}{j}g & \text{in } \R^+\times\T^3\\
\p{u\p{0},\omega\p{0}}=\p{u_0,\omega_0}&\text{on } \T^3
\end{cases}
\end{equation}
for given data $u_0,\omega_0 : \T^3 \to \R^3$ and $f,g : \R^+ \times \T^3  \to \R^3$ in certain regularity classes, with $f$ having vanishing spatial average for all time and $u_0$ having vanishing spatial average and divergence.

The evolution of the pressure is essentially trivial since the forcing term in known. If we apply the operator $\p{I-\P}$ to the second equation in \eqref{mp_linearized_} we are left with: $\grad p=\p{I-\P}f$. As $\int_{\T^3}p\p{t,x}\;\m{d}x=0$ for almost every $t\in\R^+$, we are in a position to use Proposition \ref{potential_map} to deduce that  $p=\Pi\p{I-\P}f$. To streamline the linear existence theory, we will posit in addition that $\p{I-\P}f=0$ on the space-time domain.  Thus, we shall study the system:
\begin{equation}\label{mp_linearized} 
\begin{cases}
\Div u=0 & \text{in } \R^+\times\T^3\\
\varrho \pd_t u-\p{\ep+\frac{\kappa}{2} }\Delta u- \kappa \curl\omega =f & \text{in } \R^+\times\T^3\\
j \pd_t\omega + j u\cdot \grad\zeta-\p{\al+\gam}\Delta\omega-\p{\frac{\al}{3}+\be-\gam} \grad\Div\omega + 2\kappa \omega -  \kappa\curl u= g & \text{in } \R^+\times\T^3\\
\p{u\p{0},\omega\p{0}}=\p{u_0,\omega_0}&\text{on } \T^3.
\end{cases}
\end{equation}
We will see later that the well-posedness of this reduced linear system is sufficient for our intended purpose.

Since \eqref{mp_linearized} is a non-constant coefficient problem, the Fourier transform is not a particularly convenient technique for producing a solution.  Instead, our strategy for solving \eqref{mp_linearized} is to employ a fixed-point argument.  For this it's convenient to initially work in a functional setting with minimal temporal regularity and integrability and to subsequently bootstrap.

\subsection{Locally integrable solutions and bootstrapping}

We now define a notion of solution to \eqref{mp_linearized} that we call a locally integrable solution.  It has all of the desired spatial regularity but lacks high-order temporal regularity and integrability.  

\begin{defn}\label{locally integrable solution}
Let $s\in\R^+\cup\cb{0}$ and 
\begin{equation}
 q \in 
\begin{cases}
(3/2,\infty) &\text{if } 0 \le s \le 1/2 \\
(s,\infty)   &\text{if } 1/2 < s.
\end{cases}
\end{equation}
Let $\zeta_0\in H^q_\|\p{\T^3;\R^3}$, and let $\zeta$ be the associated potential microflow from Definition \ref{potential flow}.  Suppose that $u_0 \in \z{H}^{1+s}_{\perp}\p{\T^3;\R^3}$, $\omega_0 \in H^{1+s}\p{\T^3;\R^3}$, $f \in L^2\p{\R^+;\z{H}^s_\perp\p{\T^3;\R^3}}$, and $g \in  L^2\p{\R^+;H^s\p{\T^3;\R^3}}$.  A locally integrable solution to \eqref{mp_linearized} is a pair $u,\omega : \R^+ \times \T^3 \to \R^3$ such that
\begin{equation}
u \in \bigcap_{T\in\R^+}L^2\p{\p{0,T};\z{H}^{2+s}_\perp\p{\T^3;\R^3}}\cap H^1\p{\p{0,T};\z{H}_\perp^{s}\p{\T^3;\R^3}} 
\end{equation}
and 
\begin{equation} 
\omega \in \bigcap_{T\in\R^+}L^2\p{\p{0,T};H^{2+s}\p{\T^3;\R^3}}\cap H^1\p{\p{0,T};H^s\p{\T^3;\R^3}},
\end{equation}
satisfying \eqref{mp_linearized} in the strong sense, with the initial data $(u_0,\omega_0)$ achieved in the $H^{1+s}$ topology as in Proposition \ref{interpolation}.  
\end{defn}

Locally integrable solutions have minimal temporal regularity and integrability, but we will show that the structure of \eqref{mp_linearized} automatically promotes solutions to higher temporal integrability.  To prove this we begin by introducing some useful functionals

\begin{defn}\label{functionals}
Suppose that $s,$ $q$, $\zeta_0$, $\zeta$, $f$, $g$, $u_0$, $\omega_0$, $u$, and $\omega$ are as in Definition \ref{locally integrable solution}.  We define $\mathscr{E},\mathscr{D},\mathscr{F} : \R^+ \to \R$ via 
\begin{equation}
\mathscr{E}\p{t}=\int_{\T^3}\f{\varrho}{2}\abs{\J^su\p{t}}^2+\f{j}{2}\abs{\J^s\omega\p{t}}^2, 
\end{equation}
\begin{multline}
\mathscr{D}\p{t}=\int_{\T^3} \ep\abs{D\J^s u\p{t}}^2+\p{\al+\gam}\abs{D\J^s\omega\p{t}}^2 \\
+\int_{\T^3} \p{\f{\al}{3}+\be-\gam}\abs{\Div\J^s\omega\p{t}}^2+2\kappa\abs{\f12\curl\J^su\p{t}-\J^s\omega\p{t}}^2, 
\end{multline}
and 
\begin{equation}
\mathscr{F}\p{t}= \int_{\T^3}\J^sf\p{t} \cdot \J^s u\p{t} + \J^sg\p{t}\cdot \J^s\omega\p{t} - \J^s\p{u\p{t} \cdot \grad\zeta\p{t}} \cdot \J^s\omega\p{t},
\end{equation}
    where $\J^s$ is the operator given in Definition \ref{psuedo diff}.  Note that the properties of the locally integrable solution show that $\mathscr{E}\in \bigcap_{T\in\R^+}H^1\p{0,T}$ and $\mathscr{D},\mathscr{F} \in \bigcap_{T\in\R^+}L^2\p{0,T}$.  We shall call $\mathscr{E}$ the energy functional, $\mathscr{D}$ the dissipative functional, and $\mathscr{F}$ the forcing functional.
\end{defn}

Our next result shows how $\mathscr{E}$, $\mathscr{D}$, and $\mathscr{F}$ are related to one another.

\begin{lem}\label{ED}
The functionals $\mathscr{E}$, $\mathscr{D}$, and $\mathscr{F}$ from Definition \ref{functionals} are related by the identity 
\begin{equation}
\mathscr{E}'+\mathscr{D}=\mathscr{F} \text{ almost everywhere in } \R^+.
\end{equation}
\end{lem}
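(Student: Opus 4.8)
The plan is to run the natural energy estimate for \eqref{mp_linearized}. Apply the scalar Fourier multiplier $\J^s$ to the three equations; since $\J^s$ commutes with $\pd_t$, with every constant-coefficient differential operator appearing in \eqref{mp_linearized}, and with the Leray projector, the result is the same system with $\p{u,\omega,f,g}$ replaced by $\p{\J^s u,\J^s\omega,\J^s f,\J^s g}$ and with $\J^s\p{u\cdot\grad\zeta}$ in place of $u\cdot\grad\zeta$; in particular $\Div\J^s u=0$ for a.e.\ $t$ since $u(t)\in\z{H}^{2+s}_\perp\p{\T^3;\R^3}$. I would then test the momentum equation against $\J^s u(t)$ and the microangular momentum equation against $\J^s\omega(t)$ in $L^2\p{\T^3}$, integrate by parts in space, and sum the two resulting identities.

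Two facts make this rigorous. First, the inclusions $u\in\bigcap_T L^2\p{\p{0,T};\z{H}^{2+s}_\perp}\cap H^1\p{\p{0,T};\z{H}^s_\perp}$ and $\omega\in\bigcap_T L^2\p{\p{0,T};H^{2+s}}\cap H^1\p{\p{0,T};H^s}$, together with Proposition \ref{interpolation}, show that $\mathscr E\in\bigcap_T H^1\p{0,T}$ with $\mathscr E'=\varrho\int_{\T^3}\pd_t\J^s u\cdot\J^s u+j\int_{\T^3}\pd_t\J^s\omega\cdot\J^s\omega$ a.e., and that every spatial integral below lies in $L^1_{\loc}\p{\R^+}$, so the pairings are legitimate for a.e.\ $t$. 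Second, on the closed manifold $\T^3$ integration by parts produces no boundary terms, giving $-\int_{\T^3}\Delta v\cdot v=\int_{\T^3}\abs{Dv}^2$ for all $v$, with moreover $\int_{\T^3}\abs{Dv}^2=\int_{\T^3}\abs{\curl v}^2$ when $\Div v=0$ (because $-\Delta v=\curl\curl v-\grad\Div v$), as well as $-\int_{\T^3}\grad\Div w\cdot w=\int_{\T^3}\abs{\Div w}^2$ and $\int_{\T^3}\curl a\cdot b=\int_{\T^3}a\cdot\curl b$. Using these (and writing all spatial integrals over $\T^3$ from now on), the momentum pairing becomes the identity $\tfrac{\varrho}{2}\p{\int\abs{\J^s u}^2}'+\ep\int\abs{D\J^s u}^2+\tfrac{\kappa}{2}\int\abs{\curl\J^s u}^2-\kappa\int\curl\J^s\omega\cdot\J^s u=\int\J^s f\cdot\J^s u$, where the splitting $\p{\ep+\tfrac\kappa2}\int\abs{D\J^s u}^2=\ep\int\abs{D\J^s u}^2+\tfrac\kappa2\int\abs{\curl\J^s u}^2$ uses $\Div\J^s u=0$ on the second piece only; and the microangular momentum pairing becomes $\tfrac{j}{2}\p{\int\abs{\J^s\omega}^2}'+\p{\al+\gam}\int\abs{D\J^s\omega}^2+\p{\tfrac{\al}{3}+\be-\gam}\int\abs{\Div\J^s\omega}^2+2\kappa\int\abs{\J^s\omega}^2-\kappa\int\curl\J^s u\cdot\J^s\omega=\int\J^s g\cdot\J^s\omega-j\int\J^s\p{u\cdot\grad\zeta}\cdot\J^s\omega$, the transport term being the one defining $\mathscr F$ (up to the constant $j$).

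Summing the two identities, the time-derivative terms combine into $\mathscr E'$ and the right-hand side into $\mathscr F$. Using $\int\curl\J^s\omega\cdot\J^s u=\int\curl\J^s u\cdot\J^s\omega$, the remaining left-hand terms are
\begin{equation}
\ep\int_{\T^3}\abs{D\J^s u}^2+\p{\al+\gam}\int_{\T^3}\abs{D\J^s\omega}^2+\p{\tfrac{\al}{3}+\be-\gam}\int_{\T^3}\abs{\Div\J^s\omega}^2+\tfrac{\kappa}{2}\int_{\T^3}\abs{\curl\J^s u}^2-2\kappa\int_{\T^3}\curl\J^s u\cdot\J^s\omega+2\kappa\int_{\T^3}\abs{\J^s\omega}^2,
\end{equation}
and completing the square in the last three terms turns this into $\mathscr D$, since $\tfrac{\kappa}{2}\abs{\curl\J^s u}^2-2\kappa\,\curl\J^s u\cdot\J^s\omega+2\kappa\abs{\J^s\omega}^2=2\kappa\abs{\tfrac12\curl\J^s u-\J^s\omega}^2$. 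This yields $\mathscr E'+\mathscr D=\mathscr F$ a.e.\ in $\R^+$. I anticipate no real obstacle: the only delicate point is the functional-analytic justification that a function in $L^2_{\loc}\z{H}^{2+s}\cap H^1_{\loc}\z{H}^s$ may be tested against its own time derivative and that $\mathscr E$ has the claimed weak derivative, which is exactly the content of Proposition \ref{interpolation}; everything else is integration by parts on $\T^3$ and elementary algebra.
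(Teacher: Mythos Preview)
Your proposal is correct and follows essentially the same route as the paper: apply $\J^s$, test the two equations against $\J^s u$ and $\J^s\omega$ in $L^2(\T^3)$, integrate by parts, and use $\Div\J^s u=0$ to complete the square in the $\kappa$--terms. Your observation about the factor $j$ on the transport term is accurate---the paper's Definition~\ref{functionals} appears to drop this coefficient, and the paper's own proof reproduces that same expression without the $j$; this does not affect the argument since the lemma is only used qualitatively to derive differential inequalities.
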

\begin{proof}
By hypothesis, a locally integrable solution pair $(u,\omega)$ is a strong solution to \eqref{mp_linearized}.  We apply the operator $\J^s$ to the second and third equations in \eqref{mp_linearized}, take the $L^2$ inner-product with $\J^s u\p{t}$ and $\J^s\omega\p{t}$ in $H^s\p{\T^3;\R^3}$, respectively, and then sum.   This results in the following identity (omitting the $t$-dependence for brevity):
\begin{multline}
\int_{\T^3}\varrho\p{\J^s u}'\cdot \J^s u + j\p{\J^s\omega}'\cdot \J^s\omega -\p{\ep+\f{\kappa}{2}}\Delta\J^s u \cdot \J^s u-\p{\al+\gam}\Delta\J^s\omega \cdot \J^s\omega-\p{\f{\al}{3}+\be-\gam}\grad\Div\J^s\omega \cdot \J^s\omega\\
+ \int_{\T^3} 2\kappa \J^s\omega \cdot \J^s\omega - \kappa \curl\J^s \omega \cdot \J^su - \kappa\curl \J^su \cdot \J^s\omega = \int_{\T^3}\J^sf\cdot \J^s u+\J^s g \cdot \J^s\omega - \J^s \p{u\cdot \grad\zeta}\cdot \J^s\omega.
\end{multline}
Since $\Div \J^s u =0$, the square norm of the $\nabla \J^s u$ is equal to the square norm of $\curl \J^s u$. Consequentially, we can recognize the  perfect square
\begin{equation} 
\int_{\T^3}\f{\kappa}{2}\Delta\J^su\cdot \J^s u - \kappa \curl \J^s\omega \cdot \J^su - \kappa \curl\J^su \cdot \J^s\omega + 2\kappa\J^s\omega\cdot \J^s\omega = 2\kappa\int_{\T^3}\abs{\f{1}{2}\curl\J^su-\J^s\omega}^2.
\end{equation}
On the other hand, integration by parts shows that 
\begin{multline}
\int_{\T^3}  -\ep \Delta\J^s u\cdot \J^s u-\p{\al+\gam}\Delta\J^s\omega\cdot \J^s\omega-\p{\f{\al}{3}+\be-\gam}\grad\Div\J^s\omega\cdot \J^s\omega  \\
= \int_{\T^3}\ep\abs{D\J^s u }^2 + \p{\al+\gam}\abs{D\J^s\omega}^2 
+ \p{\f{\al}{3}+\be-\gam}\abs{\Div\J^s\omega}^2. 
\end{multline}
The result now follows by 
noting that for almost every time $t \in \R^+$
\begin{equation}
\mathscr{E}'= \int_{\T^3}\varrho\p{\J^s u}'\cdot \J^s u + j\p{\J^s\omega}'\cdot \J^s\omega. 
\end{equation}
\end{proof}

Next we prove that the dissipation functional is coercive over the energy functional.

\begin{lem}\label{ED estimates}
Let the functionals $\mathscr{E}$ and $\mathscr{D}$ be as defined in Definition \ref{locally integrable solution}.  Let 
\begin{equation}
\mathcal{C}_0=\min\cb{\f{\pi^2\ep}{\varrho},\f{\ep\kappa}{2j\p{\ep+\kappa}}} \text{ and } \mathcal{C}_1=\min\cb{\f{\pi^2\ep}{2},\f{\ep\kappa}{4\p{\ep+\kappa}},2\al,3\be,2\gam}. 
\end{equation}
Then for a.e. $t\in\R^{+}$ we have that
\begin{multline} 
\mathscr{D}\p{t}\ge\mathcal{C}_0\p{\f{\varrho}{2}\norm{u\p{t}}_{\z{H}^s_\perp}^2+\f{j}{2}\norm{\omega\p{t}}_{H^s}^2}+\mathcal{C}_1\p{\norm{u\p{t}}_{\z{H}^{1+s}_{\perp}}^2+\norm{\omega\p{t}}_{H^{1+s}}^2} \\
=\mathcal{C}_0\mathcal{E}\p{t}+\mathcal{C}_1\norm{\p{u\p{t},\omega\p{t}}}_{\z{H}^{1+s}_\perp\times H^{1+s}}^2.
\end{multline}
In particular, we have that $\mathscr{D} \ge 0$.
\end{lem}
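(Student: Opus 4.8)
The plan is to bound $\mathscr{D}$ from below directly, using three ingredients: the Poincar\'e inequality for mean-zero fields, the identity $\norm{\nabla\J^s u}_{L^2}=\norm{\curl\J^s u}_{L^2}$ for the divergence-free field $u$ (already used in the proof of Lemma \ref{ED}), and the Leray decomposition, which is needed because the coefficient $\f{\al}{3}+\be-\gam$ multiplying $\norm{\Div\J^s\omega}_{L^2}^2$ in $\mathscr{D}$ may be negative, so that $\mathscr{D}$ is not visibly a sum of nonnegative terms. After these reductions the assertion collapses to a comparison of scalar constants, and the appearance of $\pi^2$ rather than $4\pi^2$ in $\mathcal{C}_0,\mathcal{C}_1$ is exactly the slack that makes this comparison go through.

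First I would treat the $\omega$-gradient part of $\mathscr{D}$. Since $\J^s$ and the Leray projector $\P$ are both Fourier multipliers they commute, so $\J^s\omega=\P\J^s\omega+\p{I-\P}\J^s\omega$ with the first summand solenoidal and the second conservative; their gradients are $L^2$-orthogonal, the divergence of the first vanishes, and for the second one has $\norm{\Div\p{I-\P}\J^s\omega}_{L^2}=\norm{\nabla\p{I-\P}\J^s\omega}_{L^2}$ (a one-line Fourier computation). Consequently $\p{\al+\gam}\norm{\nabla\J^s\omega}_{L^2}^2+\p{\f{\al}{3}+\be-\gam}\norm{\Div\J^s\omega}_{L^2}^2$ equals $\p{\al+\gam}\norm{\nabla\P\J^s\omega}_{L^2}^2+\p{\f{4\al}{3}+\be}\norm{\nabla\p{I-\P}\J^s\omega}_{L^2}^2$, in which both coefficients are positive; moreover $\mathcal{C}_1\le\min\cb{2\al,2\gam}\le\al+\gam$ and $\mathcal{C}_1\le\min\cb{2\al,3\be}\le\f{4\al}{3}+\be$, so this is $\ge\mathcal{C}_1\norm{\nabla\J^s\omega}_{L^2}^2$. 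Keeping the other two terms of $\mathscr{D}$ intact, I obtain
\begin{equation*}
\mathscr{D}\ge\ep\norm{\nabla\J^s u}_{L^2}^2+\mathcal{C}_1\norm{\nabla\J^s\omega}_{L^2}^2+2\kappa\norm{\tfrac12\curl\J^s u-\J^s\omega}_{L^2}^2.
\end{equation*}

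Next I would control the right-hand side of the claim. Since $u$, hence $\J^s u$, has vanishing mean, Poincar\'e gives $\norm{\J^s u}_{L^2}^2\le\p{4\pi^2}^{-1}\norm{\nabla\J^s u}_{L^2}^2$, and (using $\p{1+\abs{k}^2}^s\ge\abs{k}^{2s}$ for $s\ge0$) also $\norm{u}_{\z{H}^{1+s}_\perp}^2\le\p{4\pi^2}^{-1}\norm{\nabla\J^s u}_{L^2}^2$; while Plancherel gives $\norm{\omega}_{H^{1+s}}^2=\norm{\J^s\omega}_{L^2}^2+\p{4\pi^2}^{-1}\norm{\nabla\J^s\omega}_{L^2}^2$. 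The remaining quantity $\norm{\J^s\omega}_{L^2}^2$ will be estimated in two ways according to where it occurs: by the triangle inequality together with $\norm{\curl\J^s u}_{L^2}=\norm{\nabla\J^s u}_{L^2}$,
\begin{equation*}
\norm{\J^s\omega}_{L^2}^2\le2\norm{\tfrac12\curl\J^s u-\J^s\omega}_{L^2}^2+\tfrac12\norm{\nabla\J^s u}_{L^2}^2,
\end{equation*}
and by separating the Fourier frequencies $k\ne0$ (controlled by $\p{4\pi^2}^{-1}\norm{\nabla\J^s\omega}_{L^2}^2$) from $k=0$ (which inside the coupling square is just $-\widehat{\J^s\omega}(0)$, since $\widehat{\curl\J^s u}(0)=0$),
\begin{equation*}
\norm{\J^s\omega}_{L^2}^2\le\p{4\pi^2}^{-1}\norm{\nabla\J^s\omega}_{L^2}^2+\norm{\tfrac12\curl\J^s u-\J^s\omega}_{L^2}^2.
\end{equation*}
I would use the first for the $\norm{\J^s\omega}_{L^2}^2$ appearing inside $\mathcal{C}_0\mathscr{E}$ — crucially not spending the $\omega$-gradient dissipation there, since $\mathcal{C}_0$, unlike $\mathcal{C}_1$, need not be small when $\al,\be,\gam$ are — and the second for the one inside $\mathcal{C}_1\norm{\omega}_{H^{1+s}}^2$. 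Gathering all contributions as multiples of $\norm{\nabla\J^s u}_{L^2}^2$, $\norm{\nabla\J^s\omega}_{L^2}^2$, and $\norm{\tfrac12\curl\J^s u-\J^s\omega}_{L^2}^2$, the claim reduces to the three scalar inequalities
\begin{equation*}
\f{\mathcal{C}_0\varrho}{8\pi^2}+\f{\mathcal{C}_0 j}{4}+\f{\mathcal{C}_1}{4\pi^2}\le\ep,\qquad \f{\mathcal{C}_1}{2\pi^2}\le\mathcal{C}_1,\qquad \mathcal{C}_0 j+\mathcal{C}_1\le2\kappa,
\end{equation*}
and these follow at once from $\mathcal{C}_0\le\min\cb{\pi^2\ep/\varrho,\ \ep\kappa/(2j(\ep+\kappa))}$ and $\mathcal{C}_1\le\min\cb{\pi^2\ep/2,\ \ep\kappa/(4(\ep+\kappa))}$: in the first inequality each of the three summands is at most $\ep/8$; the second is trivial; and in the third $\mathcal{C}_0 j\le\kappa/2$ and $\mathcal{C}_1\le\kappa/4$. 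This yields $\mathscr{D}\ge\mathcal{C}_0\mathscr{E}+\mathcal{C}_1\norm{\p{u,\omega}}_{\z{H}^{1+s}_\perp\times H^{1+s}}^2$, and $\mathscr{D}\ge0$ follows.

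The one genuinely non-routine step, and what I expect to be the main obstacle, is the possibly-negative coefficient $\f{\al}{3}+\be-\gam$: the Leray splitting above, which replaces it by the manifestly positive $\f{4\al}{3}+\be$ on the conservative component, is what makes $\mathscr{D}$ amenable to the elementary estimates. A secondary bookkeeping subtlety, flagged above, is that $\norm{\J^s\omega}_{L^2}^2$ must be controlled by the coupling square and the $u$-dissipation — never the $\omega$-dissipation — wherever it carries a factor of $\mathcal{C}_0$, since otherwise the argument would spuriously require a lower bound on $\mathcal{C}_1$ in terms of $\mathcal{C}_0$, which fails when the microviscosities degenerate.
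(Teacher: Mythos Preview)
Your proof is correct but follows a genuinely different route from the paper's.

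For the $\omega$-gradient block, where the coefficient $\f{\al}{3}+\be-\gam$ may be negative, the paper uses the \emph{pointwise matrix} decomposition of $D\J^s\omega$ into its traceless symmetric, trace, and antisymmetric parts (the Frobenius-orthogonal split underlying the stress-couple tensor $C$), which recasts the expression as $2\al\abs{\cdot}^2+3\be\abs{\cdot}^2+2\gam\abs{\cdot}^2$ and immediately gives $\ge\min\{2\al,3\be,2\gam\}\norm{D\J^s\omega}_{L^2}^2$. You instead use the \emph{field-level} Leray (Helmholtz) split of $\J^s\omega$, which yields the equivalent bound $\ge\min\{\al+\gam,\tfrac{4\al}{3}+\be\}\norm{\nabla\J^s\omega}_{L^2}^2$. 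Both decompositions work; the paper's is closer to the physical stress structure, yours is purely Fourier-multiplier.

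For the coupling, the paper expands the square $2\kappa\abs{\tfrac12\curl\J^s u-\J^s\omega}^2$ and applies Cauchy's inequality with a specific weight $\del=\tfrac14(1+\ep/\kappa)$ to extract $\tfrac{\ep}{2}\norm{D\J^s u}_{L^2}^2+\tfrac{\ep\kappa}{2(\ep+\kappa)}\norm{\J^s\omega}_{L^2}^2$ directly, then combines with Poincar\'e. You keep the square intact and instead bound $\norm{\J^s\omega}_{L^2}^2$ by the coupling square plus $u$-dissipation (triangle inequality) or plus $\omega$-dissipation (frequency splitting), routing the two occurrences differently and reducing to three explicit scalar inequalities. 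Your bookkeeping remark about not spending $\omega$-dissipation on the $\mathcal{C}_0$-weighted copy is exactly the analogue of the paper's choice of $\del$, which likewise avoids touching the $\omega$-gradient term. The paper's argument is shorter; yours makes the role of each constant in $\mathcal{C}_0,\mathcal{C}_1$ more transparent.
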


\begin{proof}
We again suppress the time dependence for brevity.  We compute that 
\begin{equation}
\Div\p{\al\p{D\omega+D\omega^{\m{t}}-\f23\Div\omega I}+\be\Div\omega I+\gam\p{D\omega+D\omega^\m{t}}}=\p{\al+\gam}\Delta\omega+\p{\f{\al}{3}+\be-\gam}\grad\Div\omega. 
\end{equation}
Taking the inner-product of this equation with $\omega$ in the space $H^s\p{\T^3;\R^3}$ and integrating by parts yields the identity
\begin{multline}
\int_{\T^3}\Div\p{\al\p{D\J^s\omega+D\J^s\omega^{\m{t}}}+\be\Div\J^s\omega I +\gam\p{D\J^s\omega+D\J^s\omega^\m{t}}}\cdot\J^s\omega \\
=-\int_{\T^3}\p{\al+\gam}\abs{D\J^s\omega}^2+\p{\f{\al}{3}+\be-\gam}\abs{\Div\J^s\omega}^2.
\end{multline}
We write 
\begin{equation}
D\omega=\f12\p{D\omega+D\omega^{\m{t}}-\f23\Div\omega I}+\f13\Div\omega I+\f12\p{D\omega-D\omega^{\m{t}}}, 
\end{equation}
which is an orthogonal decomposition relative to the usual Frobenius inner-product on matrices.  Thus the previous identity yields
\begin{multline}
\int_{\T^3}\p{\al+\gam}\abs{D\J^s\omega}^2+\p{\f{\al}{3} + \be-\gam}\abs{\Div\J^s\omega}^2  \\
= \int_{\Omega}2\al\abs{\f{1}{2}\p{D\J^s\omega+D\J^s\omega^{\m{t}}-\f23\Div\J^s\omega I}}^2  + 3\be\abs{\f13\Div\J^s\omega I}^2+2\gam\abs{\f{1}{2}\p{D\J^s\omega-D\J^s\omega^{\m{t}}}}^2 \\
\ge \min\cb{2\al,3\be,2\gam}\int_{\T^3}\abs{D\J^s\omega}^2. 
\end{multline}

Let $\del=\f{1}{4}\p{1+\f{\ep}{\kappa}}\in\R^+$.  We may use Cauchy's inequality and the fact that $u$ is solenoidal to bound
\begin{multline}
\int_{\T^3}\ep\abs{D\J^su}^2+2\kappa\abs{\f12\curl\J^su-\J^s\omega}^2=\int_{\T^3}\p{\ep+\f{\kappa}{2}}\abs{D\J^su}^2+2\kappa\abs{\J^s\omega}^2-2\kappa\curl\J^su\cdot \J^s\omega \\
 \ge \int_{\T^3}\p{\ep+\f{\kappa}{2}-2\kappa\del}\abs{D\J^su}^2+\p{2\kappa-\f{\kappa}{2\del}}\abs{\J^s\omega}^2=\int_{\T^3}\f{\ep}{2}\abs{D\J^su}^2+\f{\ep\kappa}{2\p{\ep+\kappa}}\abs{\J^s\omega}^2.
\end{multline}
Since $u$ has zero spatial average,  $\J^s u$ does as well, and so we have the Poincar\'e inequality
\begin{equation}
\int_{\T^3}\abs{D\J^su}^2\ge4\pi^2\int_{\T^3}\abs{\J^su}^2. 
\end{equation}
Putting  the above estimates together, we arrive at the bound
\begin{multline}
\mathscr{D}\p{t}  \ge \min\cb{\f{\pi^2\ep}{\varrho},\f{\ep\kappa}{2j\p{\ep+\kappa}}}\p{\int_{\T^3}\f{\varrho}{2}\abs{\J^su\p{t}}^2+\f{j}{2}\abs{\J^s\omega\p{t}}^2} \\
+\min\cb{\f{\pi^2\ep}{2},\f{\ep\kappa}{4\p{\ep+\kappa}},2\al,3\be,2\gam}\p{\int_{\T^3}\abs{\J^{s+1}u\p{t}}^2+\abs{\J^{s+1}\omega\p{t}}^2},
\end{multline}
which is the desired result.

\end{proof}

We now have the tools needed to begin bootstrapping.  We will do so in two steps.

\begin{thm}[First promotion of locally integrable solutions]\label{promotion 1}
Suppose that $s,$ $q$, $\zeta_0$, $\zeta$, $f$, $g$, $u_0$, $\omega_0$, $u$, and $\omega$ are as in Definition \ref{locally integrable solution}.   Then we have the inclusions
\begin{equation} 
u \in  L^\infty\p{\R^+;\z{H}^s_\perp \p{\T^3;\R^3}}\cap L^2\p{\R^+;\z{H}^{1+s}_\perp\p{\T^3;\R^3}}
\end{equation}
and 
\begin{equation} 
\omega \in  L^\infty\p{\R^+;H^s\p{\T^3;\R^3}}\cap L^2\p{\R^+;H^{1+s}\p{\T^3;\R^3}}.
\end{equation}
Moreover, there exists a constant $\mathcal{C}\in\R^+$, independent of $u$, $\omega$, $f$, $g$, $u_0$, and $\omega_0$, such that
\begin{equation} 
\norm{u}_{L^\infty\z{H}^s_\perp}+\norm{\omega}_{L^\infty H^s}+\norm{u}_{L^2\z{H}^{1+s}_\perp}+\norm{\omega}_{L^2H^{1+s}}\le\mathcal{C}\p{\norm{u_0}_{\z{H}^{1+s}_\perp}+\norm{\omega_0}_{H^{1+s}}+\norm{f}_{L^2\z{H}^s_\perp}+\norm{g}_{L^2H^s}}.
\end{equation}
\end{thm}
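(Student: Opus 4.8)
The plan is to run a single energy estimate powered by the energy--dissipation identity of Lemma \ref{ED} and the coercivity of Lemma \ref{ED estimates}, and close it off with a Gr\"onwall argument whose $L^1$-in-time weight comes from the integrability $\zeta\in L^2(\R^+;H^{1+q}_\|)$ supplied by Theorem \ref{micropotential_exist}. First note that $\mathscr{E}\in\bigcap_{T}H^1(0,T)$ is locally absolutely continuous on $\R^+$ and extends continuously to $t=0$ with $\mathscr{E}(0)=\tfrac{\varrho}{2}\norm{u_0}_{H^s}^2+\tfrac{j}{2}\norm{\omega_0}_{H^s}^2$, since $\J^su(t)\to\J^su_0$ and $\J^s\omega(t)\to\J^s\omega_0$ in $L^2$; moreover, up to the factors $\varrho,j$ and the equivalence of norms on the mean-zero spaces, $\sup_{t}\mathscr{E}(t)$ dominates $\norm{u}_{L^\infty\z{H}^s_\perp}^2+\norm{\omega}_{L^\infty H^s}^2$, while $\int_0^\infty\mathscr{D}$ dominates $\norm{u}_{L^2\z{H}^{1+s}_\perp}^2+\norm{\omega}_{L^2H^{1+s}}^2$ by Lemma \ref{ED estimates}. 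Thus the entire task reduces to bounding $\sup_t\mathscr{E}(t)$ and $\int_0^\infty\mathscr{D}$ by the square of the asserted right-hand side.

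The heart of the matter is estimating the forcing functional $\mathscr{F}$. For the terms $\int\J^sf\cdot\J^su$ and $\int\J^sg\cdot\J^s\omega$, Cauchy--Schwarz together with $\norm{v}_{H^s}\le\norm{v}_{H^{1+s}}$ and Young's inequality yield, for any $\eta>0$, a bound $\eta\norm{(u(t),\omega(t))}_{\z{H}^{1+s}_\perp\times H^{1+s}}^2+C_\eta\p{\norm{f(t)}_{\z{H}^s}^2+\norm{g(t)}_{H^s}^2}$. For the transport term I would invoke the Sobolev product estimate from Appendix \ref{app_tools}: in the range of $s$ and $q$ fixed in Definition \ref{locally integrable solution} one has $s\le q$ and $q>1/2$ with strict inequalities, so the multiplication $H^{1+s}\times H^q\to H^s$ is bounded, giving $\norm{u\cdot\grad\zeta}_{H^s}\lesssim\norm{u}_{H^{1+s}}\norm{\grad\zeta}_{H^q}\lesssim\norm{u}_{\z{H}^{1+s}_\perp}\norm{\zeta}_{H^{1+q}_\|}$; pairing this with $\J^s\omega$, applying Young's inequality, and using $\norm{\omega(t)}_{H^s}^2\le\tfrac{2}{j}\mathscr{E}(t)$ controls the transport term by $\eta\norm{u(t)}_{\z{H}^{1+s}_\perp}^2+C_\eta\norm{\zeta(t)}_{H^{1+q}_\|}^2\mathscr{E}(t)$. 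The decisive input is Theorem \ref{micropotential_exist}(1), which gives $\norm{\zeta}_{L^2H^{1+q}_\|}\le C\norm{\zeta_0}_{H^q_\|}$, so the weight $a(t):=C_\eta\norm{\zeta(t)}_{H^{1+q}_\|}^2$ belongs to $L^1(\R^+)$ with $\norm{a}_{L^1}\le C\norm{\zeta_0}_{H^q_\|}^2$. I expect this product estimate, and the verification that the $q$-range of Definition \ref{locally integrable solution} is exactly what it requires, to be the one genuine obstacle; the rest is bookkeeping.

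Choosing $\eta$ small enough that the $H^{1+s}$-norm contributions are absorbed into half of the $\mathcal{C}_1$-term of $\mathscr{D}$, Lemmas \ref{ED} and \ref{ED estimates} combine to the pointwise-a.e. differential inequality $\mathscr{E}'(t)+\tfrac{\mathcal{C}_0}{2}\mathscr{E}(t)+\tfrac{\mathcal{C}_1}{2}\norm{(u(t),\omega(t))}_{\z{H}^{1+s}_\perp\times H^{1+s}}^2\le a(t)\mathscr{E}(t)+b(t)$, where $a$ is as above and $b(t):=C\p{\norm{f(t)}_{\z{H}^s}^2+\norm{g(t)}_{H^s}^2}\in L^1(\R^+)$. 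Discarding the nonnegative terms $\tfrac{\mathcal{C}_0}{2}\mathscr{E}$ and $\tfrac{\mathcal{C}_1}{2}\norm{(u,\omega)}_{\z{H}^{1+s}_\perp\times H^{1+s}}^2$ and applying the integral form of Gr\"onwall's inequality gives $\sup_{t\ge0}\mathscr{E}(t)\le e^{\norm{a}_{L^1}}\p{\mathscr{E}(0)+\norm{b}_{L^1}}=:M$; feeding $M$ back into the differential inequality, integrating over $(0,T)$, dropping $\mathscr{E}(T)\ge0$ and $\tfrac{\mathcal{C}_0}{2}\int_0^T\mathscr{E}\ge0$, and letting $T\to\infty$ by monotone convergence yields $\tfrac{\mathcal{C}_1}{2}\int_0^\infty\norm{(u,\omega)}_{\z{H}^{1+s}_\perp\times H^{1+s}}^2\le\mathscr{E}(0)+M\norm{a}_{L^1}+\norm{b}_{L^1}$. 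Since $\mathscr{E}(0)\le C\p{\norm{u_0}_{\z{H}^{1+s}_\perp}^2+\norm{\omega_0}_{H^{1+s}}^2}$, $\norm{b}_{L^1}=C\p{\norm{f}_{L^2\z{H}^s_\perp}^2+\norm{g}_{L^2H^s}^2}$, and $e^{\norm{a}_{L^1}}$ depends only on $\norm{\zeta_0}_{H^q_\|}$ and the physical and regularity parameters, taking square roots and using $\sqrt{x+y}\le\sqrt{x}+\sqrt{y}$ gives the stated estimate with $\mathcal{C}$ independent of $u,\omega,f,g,u_0,\omega_0$.
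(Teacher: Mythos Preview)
Your proof is correct and follows the same energy--dissipation--Gr\"onwall skeleton as the paper, but with a genuine simplification in how the transport term $u\cdot\nabla\zeta$ is handled. The paper splits into cases on $s$: for $0\le s\le 1/2$ it puts the ``high'' Sobolev index on $\nabla\zeta$ via Proposition~\ref{products}, invokes the pointwise smoothing estimate of Proposition~\ref{type 2 flow smoothing} to bound $\norm{\zeta(t)}_{H^{1+\tilde q}}$ by $e^{-ct}t^{-\theta}\norm{\zeta_0}_{H^q}$ (for an auxiliary $\tilde q\in(3/2,q)$), and then has to manage the resulting mild singularity at $t=0$ through a threshold time $T^\star$ and a separate Gr\"onwall on $(0,T^\star)$; for $s>1/2$ it puts the high index on $u$ and absorbs $\norm{u}_{H^{1+s}}^2$ into the dissipation. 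You collapse both cases into the single estimate $\norm{u\cdot\nabla\zeta}_{H^s}\lesssim\norm{u}_{H^{1+s}}\norm{\zeta}_{H^{1+q}}$, absorb $\norm{u}_{H^{1+s}}^2$, and note that the remaining weight $\norm{\zeta(\cdot)}_{H^{1+q}}^2$ lies in $L^1(\R^+)$ directly from the inclusion $\zeta\in L^2(\R^+;H^{1+q}_\|)$ of Theorem~\ref{micropotential_exist}; this bypasses the smoothing estimate, the auxiliary $\tilde q$, the case split, and the $T^\star$ bookkeeping entirely. One small remark: your stated hypotheses ``$s\le q$ and $q>1/2$'' are the conditions for a more general bilinear estimate than Proposition~\ref{products} actually provides. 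To stay within the paper's toolbox you should instead observe that under Definition~\ref{locally integrable solution} one always has $\max(1+s,q)>3/2$ (since $1+s>3/2$ when $s>1/2$, and $q>3/2$ when $s\le 1/2$), which is exactly what Proposition~\ref{products} needs to yield $H^{1+s}\times H^q\to H^s$ after one trivial embedding.
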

\begin{proof}
Due to the dependence of $q$ on $s$ in Definition \ref{locally integrable solution} we must break to two cases:
$0 \le s \le 1/2$ and $1/2 < s$.   We begin with the harder case, $0 \le s \le 1/2$, in which case $\zeta_0\in H^q_\|\p{\T^3;\R^3}$ for $q\in\p{\f32,\infty}$.

Lemmas \ref{ED} and $\ref{ED estimates}$ imply the differential inequality
\begin{equation}\label{promotion 1 1}
\mathscr{E}'\p{t}+\mathcal{C}_0\mathcal{E}\p{t}+\mathcal{C}_1\norm{\p{u\p{t},\omega\p{t}}}^2_{\z{H}^{1+s}_\perp\times H^{1+s}}\le\mathscr{F}\p{t} 
\end{equation}
for a.e. $t\in\R^+$. 

We now turn to an estimate of the functional $\mathscr{F}$.  Picking $\tilde q\in\p{\f32,q}$ and using Propositions \ref{products} and \ref{type 2 flow smoothing}, we learn that there are universal constants $C_0,C_1\in\R^+$ such that  
\begin{multline} 
\norm{u\p{t}\cdot \grad\zeta\p{t}}^2_{H^s}\le C_0\norm{u\p{t}}^2_{H^s}\norm{\grad\zeta\p{t}}_{H^{\tilde{q}}}^2\le 4\pi^2C_0\norm{u\p{t}}^2_{H^s}\norm{\zeta\p{t}}_{H^{\tilde{q}+1}}^2 \\
\le 4\pi^2C_0C_1\f{\exp\p{-\f{4\kappa}{j}t}}{t^{\min\cb{0,1+\tilde{q}-q}}}\norm{u\p{t}}^2_{H^s}\norm{\zeta_0}_{H^{q}}^2.
\end{multline}
From this, the definition of $\mathscr{E}$, and Cauchy's inequality, we obtain the bound
\begin{multline}
\int_{\T^3}\J^s\p{u\p{t}\cdot \grad\zeta\p{t}}\cdot \J^s\omega\p{t} \le 2 \pi \sqrt{C_0C_1}\f{\exp\p{-\f{2\kappa}{j}t}}{t^{\f12\min\cb{0,1+\tilde{q}-q}
}}\norm{\zeta_0}_{H^{q}}\norm{\omega\p{t}}_{H^s}\norm{u\p{t}}_{H^s}\\
 \le 2\pi\sqrt{C_0C_1}\max\cb{\f{1}{\varrho},\f{1}{j}}\f{\exp\p{-\f{2\kappa}{j}t}}{t^{\f12\min\cb{0,1+\tilde{q}-q}}}\norm{\zeta_0}_{H^q}\mathscr{E}\p{t}.
\end{multline}
Set $C_2=2\pi\sqrt{C_0C_1}\max\cb{\f{1}{\varrho},\f{1}{j}}\norm{\zeta_0}_{H^{q}}\in\R^+\cup\cb{0}$. With this and  another use of the Cauchy's inequality, we obtain a good upper bound on the functional $\mathscr{F}$:
\begin{equation}\label{promotion 1 2} 
\mathscr{F}\p{t}\le\p{\f{\mathcal{C}_0}{2}+C_2\f{\exp\p{-\f{2\kappa}{j}t}}{t^{\f12\min\cb{0,1+\tilde q-q}}}}\mathscr{E}\p{t}+\f{1}{\mathcal{C}_0}\max\cb{\f{1}{\varrho},\f{1}{j}}\norm{\p{f\p{t},g\p{t}}}_{\z{H}^s_\perp\times H^s}^2.
\end{equation}

Now set 
\begin{equation}
\mathscr{J}_0\p{t}=\mathcal{C}_1\norm{\p{u\p{t},\omega\p{t}}}_{\z{H}^{1+s}_\perp\times H^{1+s}}^2 \text{ and } \mathscr{J}_1\p{t}=\f{1}{\mathcal{C}_0}\max\cb{\f{1}{\varrho},\f{1}{j}}\norm{\p{f\p{t},g\p{t}}}^2_{\z{H}^{s}_\perp\times H^s}.
\end{equation}
Combining \eqref{promotion 1 1} and \eqref{promotion 1 2} then provides us with the differential inequality
\begin{equation}\label{promotion 1 3} 
\mathscr{E}'\p{t}+\f{\mathcal{C}_0}{4}\mathscr{E}\p{t}+\mathscr{J}_0\p{t}\le\mathscr{J}_1\p{t}+\p{-\f{\mathcal{C}_0}{4}+C_2\f{\exp\p{-\f{2\kappa}{j}t}}{t^{\f12\min\cb{0,1+\tilde q-q}}}}\mathscr{E}\p{t}.
\end{equation}
Now, thanks to monotonicity, there exists a $T^\star\in\R^+\cup\cb{0}$ depending only on $\mathcal{C}_0,C_2,\kappa,j,q,\tilde q$, such that  
\begin{equation}
-\f{\mathcal{C}_0}{4}+C_2\f{\exp\p{-\f{2\kappa}{j}t}}{t^{\f12\min\cb{0,1+\tilde q-q}}}<0 \Leftrightarrow t\in\p{T^\star,\infty}.
\end{equation}
Since $\min\cb{0,1+\tilde{q}-q}<1$, we have that 
\begin{equation}
C_3 = \int_{\p{0,T^\star}}\abs{-\f{\mathcal{C}_0}{4}+C_2\f{\exp\p{-\f{2\kappa}{j}\tau}}{\tau^{\f12\min\cb{0,1+\tilde{q}-q}}}}\;\m{d}\tau < \infty.
\end{equation}
Thus, upon integrating \eqref{promotion 1 3} on the interval $\p{0,t}$ for some $t\in\R^+$,  we arrive at the bound
\begin{equation} 
\mathscr{E}\p{t}-\mathscr{E}\p{0}+\int_{\p{0,t}}\mathscr{J}_0\p{\tau}\;\m{d}\tau\le\int_{\p{0,t}}\mathscr{J}_1\p{\tau}\;\m{d}\tau+C_3\sup_{\tau\in\p{0,T^\star}}\mathscr{E}\p{\tau}.
\end{equation}
We then take the supremum over all $t\in\R^+$ to obtain
\begin{equation}\label{promotion 1 4}
\sup_{t\in\R^+}\mathscr{E}\p{t}+\int_{\R^+}\mathscr{J}_0\le\mathscr{E}\p{0}+\int_{\R^+}\mathscr{J}_1+C_3\sup_{\tau\in\p{0,T^\star}}\mathscr{E}\p{\tau}.
\end{equation}

Next we aim to estimate the right-most term in \eqref{promotion 1 4} in terms of the initial data and forcing. Set $\eta\in L^1\p{(0,T^\star)}$ via 
\begin{equation}
\eta\p{t}=\f{\mathcal{C}_0}{2}-C_2\f{\exp\p{-\f{2\kappa}{j}t}}{t^{\f12\min\cb{0,1+\tilde{q}-q}}}. 
\end{equation}
Returning to $\eqref{promotion 1 3}$,  we find that  a.e. $t\in\p{0,T^\star}$ 
\begin{equation}
\mathscr{E}'\p{t}+\eta\p{t}\mathscr{E}\p{t}\le\mathscr{J}_1\p{t}. 
\end{equation}
Applying Gronwall's lemma and taking the supremum, we arrive at the estimate
\begin{multline} 
\sup_{t \in (0,T^\star)} \mathscr{E}\p{t} \le \sup_{t \in (0,T^\star)} \left[ \exp\p{-\int_{\p{0,t}}\eta\p{\tau}\;\m{d}\tau}\mathscr{E}\p{0}+\int_{\p{0,t}}\exp\p{-\int_{\p{\tau,t}}\eta\p{\mu}\;\m{d}\mu}\mathscr{J}_1\p{\tau}\;\m{d}\tau \right] \\
\le \exp\p{C_3}\p{\mathscr{E}\p{0}+\int_{\R^+}\mathscr{J}_1}.
\end{multline}
Combining this with \eqref{promotion 1 4} proves the theorem in the cases where $s\in\sb{0,\f{1}{2}}$. 

In the second case $s\in\p{\f12,\infty}$, in which case we are assuming that $\zeta_0\in H^q_\|\p{\T^3;\R^3}$ with $q\in\p{s,\infty}$.  The only difference in the argument is that for any $\del\in\R^+$  we can now bound 
\begin{multline} 
\int_{\T^3}\J^s\p{u\p{t} \cdot \grad\zeta\p{t}} \cdot \J^s\omega\p{t} 
\le c\p{ \norm{u\p{t}}_{\z{H}^{1+s}_\perp}\norm{\omega\p{t}}_{H^{s}}\norm{\zeta\p{t}}_{H^{1+s}}} \\
\le c\p{ \del\norm{u\p{t}}_{\z{H}^{1+s}_\perp}^2+\f{1}{4\del}\norm{\omega\p{t}}_{H^{s}}^2\norm{\zeta\p{t}}_{H^{1+s}}^2},
\end{multline}
where $c\in\R^+$ is a constant depending only on $s$ from Proposition \ref{products}. We then choose $\del$ so small that the term with $u$ can be absorbed onto the left within the dissipative functional $\mathscr{D}$, and then we repeat the same argument as above to conclude.
\end{proof}

We can further exploit the structure of \eqref{mp_linearized} to improve the result of the previous theorem.

\begin{thm}[Second promotion of locally integrable solution]\label{promotion 2}
Suppose that $s,$ $q$, $\zeta_0$, $\zeta$, $f$, $g$, $u_0$, $\omega_0$, $u$, and $\omega$ are as in Definition \ref{locally integrable solution}.  Then we have the inclusions
\begin{equation} 
u \in  L^2\p{\R^+;\z{H}^{2+s}_\perp\p{\T^3;\R^3}}\cap H^1\p{\R^+;\z{H}^{s}_\perp\p{\T^3;\R^3}}
\end{equation}
and 
\begin{equation} 
\omega \in L^2\p{\R^+;H^{2+s}\p{\T^3;\R^3}}\cap H^1\p{\R^+;H^{s}\p{\T^3;\R^3}} .
\end{equation}
Moreover, there is a constant $\mathcal{K}\in\R^+$, independent of $u$, $\omega$, $f$, $g$, $u_0$, and $\omega_0$, such that 
\begin{equation} 
\norm{\p{u,\omega}}_{L^2\z{H}^{2+s}_\perp\times L^2H^{2+s}}+\norm{\p{u,\omega}}_{H^1\z{H}^s_\perp\times H^1H^s}\le\mathcal{K}\p{\norm{\p{u_0,\omega_0}}_{\z{H}^{1+s}_\perp\times H^{1+s}}+\norm{\p{f,g}}_{L^2\z{H}^s_\perp\times L^2H^s}}.
\end{equation}
\end{thm}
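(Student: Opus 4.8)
The strategy is to handle the velocity and microrotation equations of \eqref{mp_linearized} one at a time: in each case we absorb the coupling terms into the forcing and invert the relevant parabolic isomorphism from Section \ref{sec_para_iso}, using the global bounds already supplied by Theorem \ref{promotion 1} to control the low-order norms that appear. Concretely, the second equation of \eqref{mp_linearized} is the heat equation $\pd_t u - \tfrac{1}{\varrho}\p{\ep+\tfrac{\kappa}{2}}\Delta u = \tfrac{1}{\varrho}\p{f+\kappa\curl\omega}$, and the third, after dividing by $j$, is the time-independent damped Lam\'e system of Theorem \ref{Type 2} with coefficients $\p{\tfrac{\al}{j},\tfrac{\al+3\be}{3j},\tfrac{\gam}{j},\tfrac{2\kappa}{j}}$ and forcing $\tfrac{1}{j}\p{g+\kappa\curl u}-u\cdot\grad\zeta$. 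Since $\p{u,\omega}$ is a locally integrable solution, on each finite interval $\p{0,T}$ it is a strong solution of these equations lying in the domain of the corresponding isomorphism and attaining its data in the $H^{1+s}$ topology; injectivity of $\Upsilon_\perp$ (Theorem \ref{1 Preservation}) and of the Lam\'e isomorphism $\Xi$ (Theorem \ref{Type 2}) on $\p{0,T}$ then forces $u$ and $\omega$ to equal the functions produced by the respective inverse operators, which are a priori defined on all of $\R^+$ and lie in the target spaces there. Thus it suffices to show that the two forcing terms belong to the correct codomains with norms controlled by the data and by $\zeta_0$.

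For the velocity, the forcing $\tfrac{1}{\varrho}\p{f+\kappa\curl\omega}$ lies in $L^2\p{\R^+;\z{H}^s_\perp\p{\T^3;\R^3}}$: $f$ does by hypothesis, while $\curl\omega$ is solenoidal with vanishing mean and $\omega\in L^2\p{\R^+;H^{1+s}\p{\T^3;\R^3}}$ by Theorem \ref{promotion 1}, so $\curl\omega\in L^2\p{\R^+;\z{H}^s_\perp\p{\T^3;\R^3}}$. Inverting $\Upsilon_\perp$ from Theorem \ref{1 Preservation} with $n=0$ and $r=s$ places $u$ in $L^2\p{\R^+;\z{H}^{2+s}_\perp\p{\T^3;\R^3}}\cap H^1\p{\R^+;\z{H}^s_\perp\p{\T^3;\R^3}}$ with $\norm{u}_{L^2\z{H}^{2+s}_\perp}+\norm{u}_{H^1\z{H}^s_\perp}\lesssim\norm{u_0}_{\z{H}^{1+s}_\perp}+\norm{f}_{L^2\z{H}^s_\perp}+\norm{\omega}_{L^2H^{1+s}}$, the last term being data-controlled by Theorem \ref{promotion 1}. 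The interpolation embedding used in Theorem \ref{type 1 solution} (Proposition \ref{interpolation}) then additionally yields $u\in L^\infty\p{\R^+;\z{H}^{1+s}_\perp\p{\T^3;\R^3}}$ with the same bound, a fact needed immediately below.

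For the microrotation we must show $\tfrac{1}{j}\p{g+\kappa\curl u}-u\cdot\grad\zeta\in L^2\p{\R^+;H^s\p{\T^3;\R^3}}$. Here $g$ is given and $\curl u\in L^2\p{\R^+;H^{1+s}\p{\T^3;\R^3}}$ by the previous paragraph, so the work lies in estimating $u\cdot\grad\zeta$, for which we split on $s$ as in Theorem \ref{promotion 1}. If $s>1/2$ then $H^s$ is a module over $H^{1+s}$, and Proposition \ref{products} gives $\norm{u\p{t}\cdot\grad\zeta\p{t}}_{H^s}\lesssim\norm{u\p{t}}_{H^{1+s}}\norm{\zeta\p{t}}_{H^{1+s}}$, whence $\norm{u\cdot\grad\zeta}_{L^2H^s}\lesssim\norm{u}_{L^\infty H^{1+s}}\norm{\zeta}_{L^2H^{1+s}}$; since $q>s$, Theorem \ref{micropotential_exist} bounds $\norm{\zeta}_{L^2H^{1+s}}\le\norm{\zeta}_{L^2H^{1+q}_{\|}}\lesssim\norm{\zeta_0}_{H^q_{\|}}$. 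If $0\le s\le 1/2$, in which case $q>3/2$, we fix $\tilde q\in\p{3/2,q}$ and use $\norm{u\p{t}\cdot\grad\zeta\p{t}}_{H^s}\lesssim\norm{u\p{t}}_{H^s}\norm{\zeta\p{t}}_{H^{1+\tilde q}}$ from Proposition \ref{products} (multiplication by $H^{\tilde q}$ is bounded on $H^s$), so that $\norm{u\cdot\grad\zeta}_{L^2H^s}\lesssim\norm{u}_{L^\infty\z{H}^s_\perp}\norm{\zeta}_{L^2H^{1+\tilde q}}\lesssim\norm{u}_{L^\infty\z{H}^s_\perp}\norm{\zeta_0}_{H^q_{\|}}$, both factors being data-controlled via Theorems \ref{promotion 1} and \ref{micropotential_exist}. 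With the forcing bounded, Theorem \ref{Type 2} with $r=s$ and $n=0$ delivers $\omega\in L^2\p{\R^+;H^{2+s}\p{\T^3;\R^3}}\cap H^1\p{\R^+;H^s\p{\T^3;\R^3}}$ together with its norm estimate, and assembling all the inequalities produces the constant $\mathcal{K}$.

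The crux is the term $u\cdot\grad\zeta$. Although $\zeta\in C^\infty\p{\R^+\times\T^3;\R^3}$, its high Sobolev norms blow up as $t\to0^+$ (Proposition \ref{type 2 flow smoothing}), so any bound carrying a pointwise time weight on $\zeta$ would be non-integrable near the origin; this is circumvented by instead using the weight-free global inclusion $\zeta\in L^2\p{\R^+;H^{1+q}_{\|}\p{\T^3;\R^3}}$ of Theorem \ref{micropotential_exist}, balanced against the uniform-in-time bound $u\in L^\infty\p{\R^+;\z{H}^{1+s}_\perp\p{\T^3;\R^3}}$ extracted in the velocity step. One must also observe---and it is true---that the microrotation forcing costs $u$ at most one derivative ($\curl u$, controlled by Theorem \ref{promotion 1}) or is the product $u\cdot\grad\zeta$ (controlled as above), never the full $L^2\z{H}^{2+s}_\perp$ norm of $u$, so the two steps do not become circular. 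Finally, the module property $H^s\cdot H^{1+s}\hookrightarrow H^s$ requires $1+s>3/2$, which is the structural reason for the split at $s=1/2$ and for the standing hypothesis $q>3/2$ in the regime $s\le 1/2$.
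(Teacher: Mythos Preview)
Your argument is correct and follows the same two-step strategy as the paper: rewrite each equation of \eqref{mp_linearized} as a parabolic problem with the coupling terms moved to the forcing, identify $(u,\omega)$ with the output of the corresponding inverse operators via uniqueness on finite intervals, and then bound the forcings globally using Theorem \ref{promotion 1}. Your ordering (velocity first, then pull $u\in L^\infty\z{H}^{1+s}_\perp$ from Proposition \ref{interpolation}, then microrotation) matches the paper's.

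The one substantive difference is how you handle $u\cdot\grad\zeta$. You pair $\norm{u}_{L^\infty}$ with the weight-free global inclusion $\zeta\in L^2\p{\R^+;H^{1+q}_\|}$ from Theorem \ref{micropotential_exist}; the paper instead pairs $\norm{u}_{L^\infty}$ with the pointwise smoothing estimate $\norm{\zeta(t)}_{H^{1+\tilde q}}\lesssim t^{-(1+\tilde q-q)/2}e^{-2\kappa t/j}\norm{\zeta_0}_{H^q}$ from Proposition \ref{type 2 flow smoothing} and then integrates in time. Both work. Your remark that ``any bound carrying a pointwise time weight on $\zeta$ would be non-integrable near the origin'' is not accurate: with $\tilde q\in(3/2,q)$ chosen close enough to $q-1$ the exponent on $t^{-1}$ is strictly less than $1$, and the paper exploits exactly this. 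Your route is arguably cleaner because it avoids that integrability check entirely, but the pointwise approach is not obstructed.
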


\begin{proof}
Recall the heat flow and convolution operators of Definitions \ref{Heat flow 1}, \ref{Convolution}, and \ref{Type 2 flow and conv}. For any $T\in\R^+$, we have that $u\in L^2\p{\p{0,T};\z{H}^{2+s}_\perp\p{\T^3;\R^3}}\cap H^1\p{\p{0,T};\z{H}^s_\perp\p{\T^3;\R^3}}$ and that $u$ is a strong solution to the initial value problem
\begin{equation} 
\begin{cases}
\partial_t u  - \f{2\ep+\kappa}{2\varrho}\Delta u =\f{1}{\varrho}f +\f{\kappa}{\varrho}\curl\omega &\text{in } t\in\p{0,T} \times \T^3\\\
u\p{0}=u_0 &\text{on } \T^3.
\end{cases}
\end{equation}
Here we know that $u_0\in\z{H}^{1+s}_\perp\p{\T^3;\R^3}$ and  $\curl\omega,f\in L^2\p{\p{0,T};\z{H}^{s}_\perp\p{\T^3;\R^3}}$ by hypothesis. Thus, Theorems \ref{type 1 solution} and \ref{1 Preservation} imply that 
\begin{equation}
u\p{t}=\mathcal{S}\p{u_0}\p{t}+\mathcal{S}\ast\p{\f{1}{\varrho}f+\f{\kappa}{\varrho}\curl\omega}\p{t} 
\end{equation}
for all $t\in\p{0,T}$, where we have abbreviated $\mathcal{S}=\mathcal{S}_{\f{2\ep+\kappa}{2\varrho}}$. Since $T\in\R^+$ was arbitrary, this identity holds for all $t\in\R^+$.  By Theorem \ref{promotion 1} we have that $\f{1}{\varrho}f+\f{\kappa}{\varrho}\curl\omega\in L^2\p{\R^+;\z{H}^{s}_\perp\p{\T^3;\R^3}}$. Hence, Propositions \ref{heat flow 1} and \ref{Convolution well defined} guarantee  that $u\in L^2\p{\R^+;\z{H}^{2+s}_\perp\p{\T^3;\R^3}}\cap H^1\p{\R^+;\z{H}^s_\perp\p{\T^3;\R^3}}$ and that there is a universal constant $K_1\in\R^+$ such that 
\begin{equation} \label{promotion 2 1}
\norm{u}_{L^2\z{H}^{2+s}_\perp}+\norm{u}_{H^1\z{H}^s_\perp}\le K_1\p{\norm{u_0}_{\z{H}^{1+s}_\perp}+\norm{f}_{L^2\z{H}^s_\perp}+\norm{\curl\omega}_{L^2\z{H}^s_\perp}}.
\end{equation}

To perform similar estimates with $\omega$, we need to estimate the product term $u\cdot \grad\zeta$.  If $s\in\sb{0,\f12}$, then we pick $\tilde{q}\in\p{\f32,q}$ and use Propositions \ref{products} and \ref{type 2 flow smoothing} to bound 
\begin{equation}
\norm{u\cdot \grad\zeta}_{L^2\p{\R^+;\z{H}^s_\perp}}^2\le c\p{\int_{\R^+}\norm{u}_{H^s}^2\norm{\grad\zeta}_{H^{\tilde q}}^2}\le c'\p{\norm{u}_{L^\infty\p{\R^+;\z{H}^s_\perp}}^2\norm{\zeta_0}_{H^q_\|}^2\int_{\R^+}\f{\exp\p{-\f{4\kappa}{j}t}}{t^{\min\cb{0,1+\tilde{q}-q}}}\;\m{d}t}.
\end{equation}
If $s\in\p{\f12,\infty}$ then $s+1 > 3/2$, which allows us to again use  Propositions \ref{products} and \ref{type 2 flow smoothing} to bound 
\begin{equation}
\norm{u\cdot \grad\zeta}^2_{L^2\p{\R^+;H^s}} \le c\p{ \int_{\R^+}\norm{u}^2_{\z{H}^{1+s}_\perp}\norm{\grad\zeta}^2_{H^s}}\le c'\p{\norm{u}_{L^\infty\p{\R^+;\z{H}^{1+s}_\perp}}^2\norm{\zeta_0}^2_{H^q_\|}\int_{\R^+}\f{\exp\p{-\f{4\kappa}{j}t}}{t^{\min\cb{0,s+1-q}}}\;\m{d}t};
\end{equation}
where $c,c'\in\R^+$ are constants depending only on $s$ and $q$. In either case there is a constant $K_2\in\R^+$, independent of $u$, $\omega$, $u_0$, $\omega_0$, $f$, and $g$, such that 
\begin{equation}
\norm{u\cdot \grad\zeta}_{L^2\p{\R^+;H^s}}\le K_2\norm{u}_{L^\infty\p{\R^+;\z{H}^{1+s}_\perp}},
\end{equation}
which is finite by Theorem \ref{promotion 1} and Proposition \ref{interpolation}.  Hence, we can argue exactly as with the velocity field, $u$, to deduce that for all $t\in\R^+$ we have the identity
\begin{equation}
\omega\p{t}=\mathcal{T}\p{\omega_0}\p{t}+\mathcal{T}\ast\p{\f{1}{j}g+\f{\kappa}{j}\curl u-u\cdot \grad\zeta} \p{t}, 
\end{equation}
where we have abbreviated $\mathcal{T}=\mathcal{T}_{\f{\al}{j},\f{\al+3\be}{3j},\f{\gamma}{3},\f{2\kappa}{j}}$.  Since $g,\curl{u},u\cdot \grad\zeta \in L^2\p{\R^+;H^s\p{\T^3;\C^3}}$ and $\omega_0\in H^{1+s}\p{\T^3;\R^3}$, we are in a position to use Theorem \ref{Type 2} to deduce that $\omega\in L^2\p{\R^+;H^{2+s}\p{\T^3;\R^3}}\cap H^1\p{\R^+;H^s\p{\T^3;\R^3}}$ with the estimate, for some universal $K_3\in\R^+$,
\begin{equation} \label{promotion 2 2}
\norm{\omega}_{L^2H^{2+s}}+\norm{\omega}_{H^1H^s}\le K_3\p{\norm{\omega_0}_{H^{1+s}}+\norm{g}_{L^2H^s}+\norm{\curl u}_{L^2\z{H}^s_{\perp}} + \norm{u\cdot \grad\zeta }_{L^2H^s}}.
\end{equation}

The proof is complete upon summing \eqref{promotion 2 1} and \eqref{promotion 2 1} and using Theorem \ref{promotion 1} to bound the right-hand-side in terms of a universal constant times the appropriate norms of the data, forcing, and microtorquing.
\end{proof}

\subsection{Existence of locally integrable solutions to \eqref{mp_linearized}}

We now turn our attention to the question of the existence of locally integrable solutions to \eqref{mp_linearized}.  The low level of temporal regularity and integrability required in the definition of locally integrable solutions makes them amenable to a fairly simple fixed point argument.  We begin by introducing a map used in the fixed point argument.  Note, though, that the requirements of $q$ relative to $s$ are slightly stricter in the following than in Definition \ref{locally integrable solution}.

\begin{defn}\label{contraction def}
Let $s \in \R^+$, 
\begin{equation}
 q \in  
\begin{cases}
 \p{5/2,\infty} &\text{if } s\in\sb{0,3/2} \\
 (s,\infty) &\text{if } s\in\p{3/2,\infty},
\end{cases}
\end{equation}
$T_0 \in \R^+\cup\cb{0}$, and $T \in \R^+$.  Let $\zeta_0\in H^q_\|\p{\T^3;\R^3}$ and let $\zeta$ be a potential microflow starting from $\zeta_0$ as in Definition \ref{potential flow}.  We then define the map
\begin{multline} 
\Psi_{T_0,T} :\z{H}^{1+s}_\perp\p{\T^3;\R^3} \times H^{1+s}\p{\T^3;\R^3} \\
\times \p{ L^2\p{\p{T_0,T_0+T};\z{H}^s_\perp\p{\T^3;\R^3}} \times L^2\p{\p{T_0,T_0+T};H^{s}\p{\T^3;\R^3}}}^2 \\
\to  L^2\p{\p{T_0,T_0+T};\z{H}^s_\perp\p{\T^3;\R^3}}\times L^2\p{\p{T_0,T_0+T};H^{s}\p{\T^3;\R^3}}
\end{multline}
via
\begin{multline} 
\Psi_{T_0,T}\p{u_0,\omega_0,f,g,u,\omega} \\
=
\left.
\p{\mathcal{S}\p{u_0}+\mathcal{S}\ast\p{\f{1}{\varrho}f+\f{\kappa}{\varrho}\curl\omega},\mathcal{T}\p{\omega_0}+\mathcal{T}\ast\p{\f{1}{j}g+\f{\kappa}{j}\curl u-u\cdot \grad\zeta}}
\right\vert_{(T_0,T_0+T)},
\end{multline}
where we have abbreviated $\mathcal{S}=\mathcal{S}_{\f{2\ep+\kappa}{2\varrho}}$ and $\mathcal{T}=\mathcal{T}_{\f{\al}{j},\f{\al+3\be}{3j},\f{\gam}{j},\f{2\kappa}{j}}$ (see Definitions \ref{Heat flow 1}, \ref{Convolution}, and \ref{Type 2 flow and conv}).
\end{defn}

Our next result establishes conditions under which this map is contractive.

\begin{prop}\label{fixed point 1}
Let $s$, $q$, $\zeta_0$, and $\zeta$ be as in Definition \ref{contraction def}, and let $\mu\in\p{0,1}$.  Then there exists a $T\in\R^+$ such that for every time $T_0\in\R^+\cup\cb{0}$, initial data $u_0 \in  \z{H}^{1+s}_\perp\p{\T^3;\R^3}$ and  $\omega_0 \in H^{1+s}\p{\T^3;\R^3}$, forcing $f \in L^2\p{\p{T_0,T_0+T};\z{H}^{s}_\perp\p{\T^3;\R^3}}$, and microtorquing $g \in L^2\p{\p{T_0,T_0+T};H^{s}\p{\T^3;\R^3}}$  it holds that
\begin{multline} \label{fixed point 1 0}
\norm{\Psi_{T_0,T}\p{u_0,\omega_0,f,g,u,\omega}-\Psi_{T_0,T}\p{u_0,\omega_0,f,g,\tilde{u},\tilde{\omega}}}_{L^2\p{(T_0,T_0+T);\z{H}^{s}_\perp}\times L^2\p{(T_0,T_0+T);H^s}} \\
\le \mu\norm{\p{u-\tilde{u},\omega-\tilde{\omega}}}_{L^2\p{(T_0,T_0+T);\z{H}^s_\perp}\times L^2\p{(T_0,T_0+T);H^s}}
\end{multline}
for all pairs of pairs $\p{u,\omega},\p{\tilde u,\tilde\omega}\in L^2\p{\p{T_0,T_0+T};\z{H^s_\perp}\p{\T^3;\R^3}}\times L^2\p{\p{T_0,T_0+T};H^s\p{\T^3;\R^3}}$.
\end{prop}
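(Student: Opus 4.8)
The plan is to exploit the linearity of the convolution operators $\mathcal{S}\ast$ and $\mathcal{T}\ast$ to reduce \eqref{fixed point 1 0} to short-time smallness of three explicit convolution terms. Since the first four arguments of $\Psi_{T_0,T}$ agree on the two sides of \eqref{fixed point 1 0}, the data-flow pieces $\mathcal{S}(u_0)$, $\mathcal{T}(\omega_0)$ and the forcing pieces $\tfrac1\varrho\mathcal{S}\ast f$, $\tfrac1j\mathcal{T}\ast g$ cancel; writing $w=u-\tilde{u}$ and $z=\omega-\tilde{\omega}$, we are left with
\begin{equation*}
\Psi_{T_0,T}(u_0,\omega_0,f,g,u,\omega)-\Psi_{T_0,T}(u_0,\omega_0,f,g,\tilde{u},\tilde{\omega})=\left.\p{\tfrac\kappa\varrho\,\mathcal{S}\ast\curl z,\ \ \mathcal{T}\ast\p{\tfrac\kappa j\,\curl w-w\cdot\grad\zeta}}\right\vert_{(T_0,T_0+T)}.
\end{equation*}
It therefore suffices to bound each of $\mathcal{S}\ast\curl z$, $\mathcal{T}\ast\curl w$, and $\mathcal{T}\ast(w\cdot\grad\zeta)$ in the relevant $L^2((T_0,T_0+T);\cdot)$ norm by $C\,\phi(T)\,\norm{(w,z)}_{L^2((T_0,T_0+T);\z{H}^s_\perp)\times L^2((T_0,T_0+T);H^s)}$, where $C$ depends only on the physical parameters, $s$, $q$, and $\norm{\zeta_0}_{H^q}$ (in particular not on $T_0$), and $\phi(T)\to0$ as $T\to0^+$; we then pick $T$ so that the sum of the three resulting constants $C\phi(T)$ is at most $\mu$.

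For the two curl terms I would argue directly on the Fourier side, exactly as in the proofs of Propositions \ref{Convolution well defined} and \ref{Convolution 2 well defined}. Since $\curl z$ has mean zero with $\widehat{\curl z}(\tau,k)=2\pi i\,k\times\hat z(\tau,k)$, Young's inequality in time applied fiber by fiber gives
\begin{equation*}
\norm{\mathcal{S}\ast\curl z}_{L^2((T_0,T_0+T);\z{H}^s_\perp)}^2\le\tfrac{4\pi^2\kappa^2}{\varrho^2}\sum_{k\neq0}|k|^{2s}\,|k|^2\,\p{\int_0^T e^{-4\pi^2\nu|k|^2\sigma}\,\m{d}\sigma}^2\int_{T_0}^{T_0+T}|\hat z(\tau,k)|^2\,\m{d}\tau,
\end{equation*}
where $\nu=\tfrac{2\ep+\kappa}{2\varrho}$ is the heat coefficient appearing in $\mathcal{S}$; inserting $\int_0^T e^{-4\pi^2\nu|k|^2\sigma}\,\m{d}\sigma\le\min\{T,(4\pi^2\nu|k|^2)^{-1}\}$ together with the elementary inequality $|k|^2\bigl(\min\{T,(c|k|^2)^{-1}\}\bigr)^2\le T/c$ collapses the right-hand side to $\tfrac{\kappa^2}{\nu\varrho^2}\,T\,\norm{z}_{L^2((T_0,T_0+T);H^s)}^2$. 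The same computation---using the description of $\mathcal{T}\ast$ from the proof of Theorem \ref{Type 2}, namely that modulo the (bounded) Leray projection it is a sum of two damped-heat convolutions $\mathcal{S}_{\mu,\nu}\ast$ whose kernels are dominated by the undamped ones---yields $\norm{\mathcal{T}\ast\curl w}_{L^2((T_0,T_0+T);H^s)}\le C\sqrt{T}\,\norm{w}_{L^2((T_0,T_0+T);\z{H}^s_\perp)}$. Both bounds are uniform in $T_0$, so $\phi(T)=\sqrt{T}$ serves for these terms.

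The remaining term $\mathcal{T}\ast(w\cdot\grad\zeta)$ is the delicate one, and is where this argument genuinely differs from the bootstrapping Theorems \ref{promotion 1} and \ref{promotion 2}: here $w$ is controlled only in $L^2$, not $L^\infty$, in time, so no supremum can be extracted and the smallness must come jointly from the short interval and the time-decay of $\zeta$. Following the product estimates used in the proof of Theorem \ref{promotion 2}, I would set $\sigma=q-1$ when $s\in[0,3/2]$ (admissible because $q>5/2$ then forces $\sigma>3/2$) and $\sigma=s$ when $s\in(3/2,\infty)$, so that Proposition \ref{products} gives $\norm{w(t)\cdot\grad\zeta(t)}_{H^s}\le C\norm{w(t)}_{H^s}\norm{\zeta(t)}_{H^{1+\sigma}}$; then Proposition \ref{type 2 flow smoothing} (when $1+\sigma>q$) or the embedding $H^q\hookrightarrow H^{1+\sigma}$ (when $1+\sigma\le q$) gives $\norm{\zeta(t)}_{H^{1+\sigma}}\le C\,t^{-a}\,\norm{\zeta_0}_{H^q}$ with $a=\max\{0,(1+\sigma-q)/2\}$, and crucially $a<\tfrac12$ in all cases (it is $0$ unless $s>3/2$ and $s<q<1+s$, in which case $1+s-q<1$). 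Hence $\norm{\grad\zeta}_{L^2((T_0,T_0+T);H^\sigma)}\le C\norm{\zeta_0}_{H^q}\bigl(\int_0^T t^{-2a}\,\m{d}t\bigr)^{1/2}=C\norm{\zeta_0}_{H^q}\bigl(\tfrac{T^{1-2a}}{1-2a}\bigr)^{1/2}$, using that $t\mapsto t^{-2a}$ is non-increasing to replace $(T_0,T_0+T)$ by $(0,T)$ and that $2a<1$, so by the Cauchy--Schwarz inequality in time $\norm{w\cdot\grad\zeta}_{L^1((T_0,T_0+T);H^s)}\le C\norm{\zeta_0}_{H^q}\bigl(\tfrac{T^{1-2a}}{1-2a}\bigr)^{1/2}\norm{w}_{L^2((T_0,T_0+T);\z{H}^s_\perp)}$. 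Finally, a mixed-norm Young inequality in time---applied fiber by fiber via $\norm{e^{-4\pi^2\mu|k|^2\cdot}}_{L^2(0,T)}\le(8\pi^2\mu|k|^2)^{-1/2}$ for $k\neq0$ and Minkowski's integral inequality---shows that $\mathcal{T}\ast$ maps $L^1((T_0,T_0+T);H^s)$ boundedly into $L^2((T_0,T_0+T);H^s)$, uniformly in $T_0$ and $T$; composing gives the desired bound with $\phi(T)=T^{(1-2a)/2}$, which again tends to $0$. Summing the three estimates and choosing $T$ small then yields \eqref{fixed point 1 0}. The main obstacle is exactly this last term: because the rough factor $w$ lives only in $L^2$ in time one cannot imitate the supremum trick of the bootstrapping theorems, and must instead absorb the (fortunately sub-$\tfrac12$-power) temporal singularity $t^{-a}$ of $\grad\zeta$ through the Hölder/mixed-Young step above.
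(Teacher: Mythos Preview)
Your argument is correct, and your handling of the two $\curl$ terms is essentially the paper's (Cauchy--Schwarz/Young on the Fourier side, yielding a $\sqrt{T}$ factor). The difference lies in the $w\cdot\grad\zeta$ term, where you take a more circuitous route than the paper.

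The paper applies the \emph{same} short-time convolution estimate used for the curl terms to obtain
\[
\norm{\mathcal{T}\ast(v\cdot\grad\zeta)}_{L^2((0,T);H^s)}^2 \le cT\,\norm{v\cdot\grad\zeta}_{L^2((0,T);H^{s-1})}^2,
\]
i.e.\ it uses the one-derivative smoothing of $\mathcal{T}\ast$ (at cost $\sqrt{T}$) to drop the product to $H^{s-1}$. Then Proposition~\ref{products} gives $\norm{v\cdot\grad\zeta}_{H^{s-1}}\le c\norm{v}_{H^s}\norm{\grad\zeta}_{H^{q-1}}$ in both regimes of $s$, and since $\zeta_0\in H^q$ one has $\norm{\grad\zeta}_{L^\infty_t H^{q-1}}\le c\norm{\zeta_0}_{H^q}$ directly from Proposition~\ref{type 2 flow smoothing} with no time singularity. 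So the paper puts $\grad\zeta$ (not $v$) into $L^\infty_t$, keeps $v\in L^2_t H^s$, and arrives at $C_T=c\sqrt{T}$ for this term as well. Your perceived obstacle---that $w$ lives only in $L^2_t$---is therefore not a genuine one: the supremum goes on the $\zeta$ factor, which is uniformly bounded in $H^q$.

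Your route instead keeps the product in $H^s$, which forces higher regularity on $\grad\zeta$ and hence the time-smoothing singularity $t^{-a}$; you then pass through $L^1_t H^s$ via Cauchy--Schwarz and close with a mixed-norm Young bound $\mathcal{T}\ast:L^1_t H^s\to L^2_t H^s$. This is valid (your Minkowski step indeed gives $\mathcal{T}\ast:L^1_t H^{s-1}\to L^2_t H^s$, which is stronger), and the monotonicity argument for uniformity in $T_0$ is correct, but it yields $\phi(T)=T^{(1-2a)/2}$, which is suboptimal when $s>3/2$ and $q<1+s$. The paper's approach is shorter, gives a uniform $\sqrt{T}$ rate, and avoids the $L^1$--$L^2$ interplay entirely.
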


\begin{proof}
We exhibit the argument for $T_0=0$ only, as the argument for general $T_0\in\R^+$ is similar. Fix $\p{u,\omega}$, $\p{\tilde u,\tilde\omega}$, $\p{u_0,\omega_0,f,g}$ as in the hypotheses.   Observe that for any $T\in\R^+$ we have  
\begin{equation} 
\Psi_{0,T}\p{u_0,\omega_0,f,g,u,\omega}-\Psi_{0,T}\p{u_0,\omega_0,f,g,\tilde u,\tilde\omega}=\Psi_{0,T}\p{0,0,0,0,u-\tilde u,\omega-\tilde\omega}.
\end{equation}
For the sake of brevity write $u-\tilde u=v\in L^2\p{\p{0,T};\z{H}^s_\perp\p{\T^3;\R^3}}$, $\omega-\tilde \omega = \chi \in L^2\p{\p{0,T};H^s\p{\T^3;\R^3}}$, $\Psi\p{v,\chi} = \Psi_{0,T}\p{0,0,0,0,v,\chi}$, and $X=L^2\p{\p{0,T};\z{H}^s_\perp\p{\T^3;\R^3} }\times L^2\p{\p{0,T};H^s\p{\T^3;\R^3}}$. 

We claim that there exists a function $\R^+ \ni T \mapsto  C_T\in\R^+$ (depending on $s$, $q$, and $\zeta_0$) such that $\norm{\Psi\p{v,\chi}}_{X}\le C_T\norm{\p{v,\chi}}_X$ and $\lim_{T\to 0}C_T=0$.  Once the claim is established, the estimate \eqref{fixed point 1 0} follows directly from taking $T$ sufficiently small for given $\mu$.

We now turn to the proof of the claim. First note that 
\begin{equation}
 \Psi\p{v,\chi} = \left.
\p{\mathcal{S}\ast\p{ \f{\kappa}{\varrho}\curl \chi},\mathcal{T}\ast\p{\f{\kappa}{j}\curl v-v\cdot \grad\zeta}}
\right\vert_{(0,T)}.
\end{equation}
To handle the first component of $\Psi\p{v,\chi}$, we let $\al_0=\f{2\ep+\kappa}{2\varrho}$ and use the Cauchy-Schwarz inequality to bound
\begin{multline}
\norm{\S\ast\curl \chi}^2_{L^2\p{\p{0,T}; \z{H}^s_\perp} }  \le \int_{\p{0,T}}\sum_{k\in\Z^3}\abs{k}^{2s}\abs{\int_{\p{0,t}}\exp\p{-4\pi^2\al_0\abs{k}^2\p{t-\tau}}2\pi i k \times \hat{\chi}\p{t,k}\;\m{d}\tau}^2\;\m{d}t \\
\le 4\pi^2 T \sum_{k\in\Z^3} \abs{k}^{2s+2} \p{\int_{\R^+} \exp\p{-8\pi^2\al_0\abs{k}^2 t} \;\m{d}t} \p{\int_{\p{0,T}} \abs{\hat{\chi}\p{t,k}}^2 \;\m{d}t }
\\  
= \f{1}{2\al_0}T\sum_{k\in\Z^3\setminus\cb{0}}\abs{k}^{2s}\int_{\p{0,T}}\;\abs{\hat{\chi}\p{t,k}}^2\m{d}t
=\f{1}{2\al_0}T\norm{\chi}_{L^2\p{\p{0,T}; \z{H}^{s}_\perp}}^2.
\end{multline}
In the same manner, we deduce that up to a universal $c\in\R+$ we may bound:
\begin{equation} 
\begin{cases}
     \norm{\mathcal{T}\ast\curl v}^2_{L^2\p{\p{0,T};H^s }}\le  cT\norm{v}^2_{L^2\p{\p{0,T}; H^s}}\\
     \norm{\mathcal{T}\ast\p{v\cdot \grad\zeta}}_{L^2\p{\p{0,T}; H^{s}} }^2\le cT\norm{v\cdot \grad\zeta}^2_{L^2\p{\p{0,T}; H^{s-1}}}.
\end{cases}
\end{equation}
To handle the latter product term we break to cases based on $s$.  If $0 \le s \le 3/2$, then $s \le 3/2 \le q-1$ and so Proposition \ref{products} allows us to estimate for $c'\in\R^+$ depending on $s$ and $q$ 
\begin{equation}
\norm{v\cdot \grad\zeta}^2_{L^2_T H^{s-1}} \le \norm{v\cdot \grad\zeta }^2_{L^2_T H^{s}} \le c' \norm{v}^2_{L^2\p{\p{0,T}; H^s}} \norm{\grad\zeta}^2_{L^\infty\p{\p{0,T}; H^{q-1}}}. 
\end{equation}
On the other hand, if $3/2 < s$, then $s-1 < q-1$, and so we again use Proposition \ref{products} to obtain $c'\in\R^+$ and bound 
\begin{equation}
\norm{v\cdot \grad\zeta }^2_{L^2\p{\p{0,T}; H^{s-1}}}   \le  c' \norm{v}^2_{L^2\p{\p{0,T}; H^s}} \norm{\grad\zeta}^2_{L^\infty\p{\p{0,T}; H^{s-1}}} \le c' \norm{v}^2_{L^2\p{\p{0,T}; H^s}} \norm{\grad\zeta}^2_{L^\infty\p{\p{0,T}; H^{q-1}}} . 
\end{equation}
In either case we may combine the resulting estimate with Proposition \ref{type 2 flow smoothing}  to see that:
\begin{equation}
\norm{v\cdot \grad \zeta}^2_{L^2\p{\p{0,T}; H^{s-1}}} \le 4\pi^2c' \norm{v}^2_{L^2\p{\p{0,T}; H^s}} \norm{\zeta}^2_{L^\infty\p{\p{0,T}; H^{q}}} \le \tilde{c}   \norm{v}^2_{L^2\p{\p{0,T}; H^s}}  \norm{\zeta_0}_{H^q_\|}^2
\end{equation}
holds for a universal constant $\tilde{c}\in\R^+$.
Hence we may take $C_T$ to be scalar, whose size depends on $\tilde{c}$ and $\norm{\zeta_0}_{H^q_\|}$, multiple of $\sqrt{T}$.
\end{proof}

As a corollary, we can now  produce locally integrable solutions to \eqref{mp_linearized} under slightly stronger hypotheses on $q$ relative to $s$ than stated in Definition \ref{locally integrable solution}.

\begin{coro}\label{existence of locally integrable}
Let $s \in \R^+$, 
\begin{equation}\label{q_strong}
 q \in  
\begin{cases}
 \p{\f{5}{2},\infty} &\text{if } s\in\sb{0,\f32} \\
 [s+1,\infty) &\text{if } s\in\p{\f32,\infty}.
\end{cases}
\end{equation}
Let $\zeta_0\in H^q_\|\p{\T^3;\R^3}$ and let $\zeta$ be the corresponding potential microflow as in Definition \ref{potential flow}.  Then for each data quadruple  
\begin{equation} 
\p{u_0,\omega_0,f,g}\in\z{H}^{1+s}_\perp\p{\T^3;\R^3} \times H^{1+s}\p{\T^3;\R^3} \times L^2\p{\R^+;\z{H}^{s}_\perp\p{\T^3;\R^3}} \times L^2\p{\R^+;H^{s}\p{\T^3;\R^3}}
\end{equation}
there exists a unique pair $(u,\omega)$ that is a locally integrable solution to \eqref{mp_linearized} in the sense of Definition \ref{locally integrable solution}.
\end{coro}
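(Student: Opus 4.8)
The plan is to solve \eqref{mp_linearized} by a Banach fixed point argument on short time intervals, using the map $\Psi_{T_0,T}$ of Definition \ref{contraction def}, and then to glue the resulting short-time solutions into a global one. First I would fix the data $\p{u_0,\omega_0,f,g}$ and apply Proposition \ref{fixed point 1} with, say, $\mu=\tfrac12$, to extract a single length $T\in\R^+$ — depending only on $s$, $q$, $\zeta_0$, and the physical parameters — such that for \emph{every} starting time $T_0\in\R^+\cup\cb{0}$ the map $\p{v,\chi}\mapsto\Psi_{T_0,T}\p{u_0,\omega_0,f,g,v,\chi}$ is a $\tfrac12$-contraction of the Hilbert space $L^2\p{\p{T_0,T_0+T};\z{H}^s_\perp}\times L^2\p{\p{T_0,T_0+T};H^s}$ into itself (the self-mapping being built into Definition \ref{contraction def}). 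The Banach fixed point theorem then produces, on $\p{T_0,T_0+T}$, a unique pair $\p{u,\omega}$ in that space satisfying the Duhamel-type identities $u=\S\p{u_0}+\S\ast\p{\tfrac1\varrho f+\tfrac\kappa\varrho\curl\omega}$ and $\omega=\mathcal{T}\p{\omega_0}+\mathcal{T}\ast\p{\tfrac1j g+\tfrac\kappa j\curl u-u\cdot\grad\zeta}$, where $\S=\S_{\f{2\ep+\kappa}{2\varrho}}$ and $\mathcal{T}=\mathcal{T}_{\f{\al}{j},\f{\al+3\be}{3j},\f{\gam}{j},\f{2\kappa}{j}}$.

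The next step, and the one I expect to be the main obstacle, is to bootstrap the regularity of this fixed point until it matches Definition \ref{locally integrable solution}. Starting from $u\in L^2\z{H}^s_\perp$ and $\omega\in L^2 H^s$, I would note that $\curl\omega$ is automatically solenoidal and mean-free, hence lies in $L^2\z{H}^{s-1}_\perp$, and feed it, along with $u_0\in\z{H}^{1+s}_\perp$ and $f\in L^2\z{H}^s_\perp$, into the first identity; Propositions \ref{heat flow 1} and \ref{Convolution well defined} then upgrade $u$ to $L^2\z{H}^{1+s}_\perp\cap H^1\z{H}^{s-1}_\perp$. Now $\curl u\in L^2 H^s$, and, using $\grad\zeta\in L^\infty\p{\p{T_0,T_0+T};H^{q-1}}$ from Proposition \ref{type 2 flow smoothing} together with the product estimates of Proposition \ref{products}, one gets $u\cdot\grad\zeta\in L^2 H^s$; it is precisely here that the restriction \eqref{q_strong} on $q$ versus $s$ enters, and the argument splits into the cases $s\le\tfrac32$ and $s>\tfrac32$. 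Inserting $\curl u$, $g$, and $u\cdot\grad\zeta$ into the second identity and applying Theorem \ref{Type 2} upgrades $\omega$ to $L^2 H^{2+s}\cap H^1 H^s$; a second pass through the first identity, now with $\curl\omega\in L^2 H^{1+s}$, finally gives $u\in L^2\z{H}^{2+s}_\perp\cap H^1\z{H}^s_\perp$. With this regularity in hand, the limit and differentiation assertions of Propositions \ref{Convolution well defined} and \ref{Convolution 2 well defined}, together with the trace statement of Proposition \ref{interpolation}, show that $\p{u,\omega}$ is a strong solution of \eqref{mp_linearized} on $\p{T_0,T_0+T}$ attaining $\p{u_0,\omega_0}$ in $H^{1+s}$.

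To globalize, I would iterate: run the construction on $\p{0,T}$, then — using the bootstrapped regularity and Proposition \ref{interpolation} to make sense of the traces $u(T)\in\z{H}^{1+s}_\perp$ and $\omega(T)\in H^{1+s}$ — run it again on $\p{T,2T}$ with these as data, and so on over $\p{kT,(k+1)T}$; since the contraction length $T$ is the same for all starting times, this never breaks down. Concatenating the pieces and observing that consecutive pieces share a trace at each junction $kT$ (in $\z{H}^{1+s}_\perp$, respectively $H^{1+s}$), the glued pair lies in $L^2\p{\p{0,T'};\z{H}^{2+s}_\perp}\cap H^1\p{\p{0,T'};\z{H}^s_\perp}$ and $L^2\p{\p{0,T'};H^{2+s}}\cap H^1\p{\p{0,T'};H^s}$ for every $T'\in\R^+$, i.e. it is a locally integrable solution. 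For uniqueness I would argue in reverse: any locally integrable solution is, on $\p{0,T}$, a strong solution of the two decoupled parabolic equations read off from the second and third lines of \eqref{mp_linearized}, so Theorems \ref{type 1 solution} and \ref{Type 2} force it to satisfy the same Duhamel identities, hence to be the fixed point of $\Psi_{0,T}\p{u_0,\omega_0,f,g,\cdot,\cdot}$ and therefore unique there; its trace at $t=T$ is then pinned down, and induction over the intervals $\p{kT,(k+1)T}$ propagates uniqueness to all of $\R^+$.
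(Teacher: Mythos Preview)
Your argument is correct, but the order of operations differs from the paper's.  The paper runs the Banach fixed point on each interval $\p{nT_\ast,(n+1)T_\ast}$ using the \emph{same} original data $\p{u_0,\omega_0}$ throughout (never taking traces or restarting), glues the resulting pieces into a pair satisfying the global Duhamel identity on all of $\R^+$, and only then performs the regularity bootstrap once, globally; you instead bootstrap on each interval so as to extract a trace, and then restart with that trace as new initial data.  Both routes work.  The paper's version avoids the repeated bootstrapping and the need to argue that the time-shifted contraction map (with initial time $kT$ and data $\p{u(kT),\omega(kT)}$) still enjoys the same contraction length---a point you should make explicit, since $\Psi_{T_0,T}$ as written in Definition~\ref{contraction def} propagates $\S\p{u_0}(t)$ from time $0$ rather than from $T_0$.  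Your restart scheme really uses a translated map, and its uniformity in the starting time comes from the monotone decay $\norm{\zeta(kT)}_{H^q}\le\norm{\zeta_0}_{H^q}$; this is fine but worth saying.  For uniqueness the paper simply invokes the a~priori estimate of Theorem~\ref{promotion 1}, which immediately forces any locally integrable solution with vanishing data and forcing to be zero; your inductive Duhamel/fixed-point argument is also valid but more hands-on.
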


\begin{proof}
Fix $\p{u_0,\omega_0,f,g}$ as in the hypotheses. Proposition \ref{fixed point 1} provides us with a $T_\ast \in\R^+$ such that for all $T_0\in\R^+\cup\cb{0}$ the mapping $\Psi_{T_0,T_\ast}\p{u_0,\omega_0,f,g,\cdot,\cdot}$ is a contraction on $L^2\p{(T_0,T_0+T_\ast);\z{H}^s_{\perp}\p{\T^3;\R^3}}\times L^2\p{(T_0,T_0+T_\ast);H^s\p{\T^3;\R^3}}$.  Thus, for each $n \in \N$ we may apply the Banach fixed point theorem to produce a unique pair $\p{u^{(n)},\omega^{(n)}}$, defined on the temporal interval $(nT_\ast, (n+1)T_\ast )$, such that 
\begin{equation}
\p{u^{(n)},\omega^{(n)}} =  \Psi_{nT_\ast,(n+1)T_\ast}\p{u_0,\omega_0,f,g,u^{(n)},\omega^{(n)}}.
\end{equation}
We then define
\begin{multline}
\p{u,\omega}\in \bigcap_{n\in\N}\p{ L^2\p{\p{nT_\ast,(n+1)T_\ast};\z{H}^s_\perp\p{\T^3;\R^3}} \times L^2\p{\p{nT_\ast,(n+1)T_\ast};H^s\p{\T^3;\R^3}}}\\ 
= \bigcap_{T\in\R^+} \p{ L^2\p{\p{0,T};\z{H}^s_\perp\p{\T^3;\R^3}}\times L^2\p{\p{0,T};H^s\p{\T^3;\R^3}} }
\end{multline}
to be the function equal to $\p{u^{\p{n}},\omega^{\p{n}}}$ on the interval $(nT_\ast, (n+1)T_\ast)$.  By construction, we have that 
\begin{equation}
\p{u,\omega}=\p{\S\p{u_{0}}+\S\ast\p{\f{1}{\varrho}f+\f{\kappa}{\varrho}\curl\omega},\mathcal{T}\p{\omega_{0}}+\mathcal{T}\ast\p{\f{1}{j}g+\f{\kappa}{j}\curl u-u\cdot \grad\zeta}} 
\end{equation}
almost everywhere in $\R^+$. To prove that $(u,\omega)$ is a locally integrable solution, it remains to improve the spatial and local temporal regularity.

Since $\curl\omega\in\bigcap_{T\in\R^+}L^2\p{\p{0,T};\z{H}^{s-1}_\perp\p{\T^3;\R^3} }$, $f\in L^2\p{\R^+;\z{H}^s_\perp\p{\T^3;\R^3}}$, and $u_0\in\z{H}^{1+s}_\perp \p{\T^3;\R^3}$, we are in a position to promote the spatial regularity of $u$ using Propositions \ref{Convolution well defined} and \ref{heat flow 1}.  Applying these  shows that 
\begin{equation}
u\in\bigcap_{T\in\R^+}\p{L^2\p{\p{0,T};\z{H}^{1+s}_\perp\p{\T^3;\R^3}}\cap H^1\p{\p{0,T};\z{H}^{s-1}_\perp\p{\T^3;\R^3}}}. 
\end{equation}
In particular, we learn from this that $\curl u\in\bigcap_{T\in\R^+}L^2\p{\p{0,T};H^s\p{\T^3;\R^3}}$. By the product estimates from Proposition \ref{products} and the constraints on $q$, it is also the case that $u\cdot \grad\zeta$ belongs to this same space;  indeed, for any $T\in\R^+$ we may bound for some $c\in\R^+$ depending on $s$ and $q$
\begin{equation}
\norm{u\cdot \grad\zeta}_{L^2\p{\p{0,T}; H^s}} \le c  \norm{u}_{L^2\p{\p{0,T}; \z{H}^s_\perp}} \norm{\grad \zeta }_{L^\infty\p{\p{0,T}; H^{q-1}}} \le 4\pi^2 c \norm{u}_{L^2\p{\p{0,T}; \z{H}^s_\perp}} \norm{ \zeta }_{L^\infty\p{\p{0,T}; H^{q}}},
\end{equation}
and the latter is finite by the construction of $u$ and Proposition \ref{type 2 flow smoothing}. 

Now, we also have the inclusions $g\in L^2\p{\R^+;H^s\p{\T^3;\R^3}}$ and $\omega_0\in H^{1+s}\p{\T^3;\R^3}$, so  Theorem \ref{Type 2} yields the inclusion 
\begin{equation}
\omega\in\bigcap_{T\in\R^+}L^2\p{\p{0,T};H^{2+s}\p{\T^3;\R^3}}\cap H^1\p{\p{0,T};H^s\p{\T^3;\R^3}}. 
\end{equation}
Finally, we can promote the spatial regularity of $u$ once more to 
\begin{equation}
 u \in \bigcap_{T\in\R^+}L^2\p{\p{0,T};\z{H}^{2+s}_\perp \p{\T^3;\R^3}}\cap H^1\p{\p{0,T};\z{H}^s_\perp\p{\T^3;\R^3}}
\end{equation}
using this new information on the spatial regularity of $\omega$.  Differentiation of the fixed point identity confirms that $\p{u,\omega}$ is a strong solution to \eqref{mp_linearized}.  Hence the pair $\p{u,\omega}$ is indeed a locally integrable solution as in Definition \ref{locally integrable solution}. The uniqueness assertion is now a consequence of Theorem \ref{promotion 1}.

\end{proof}

\subsection{Isomorphism associated to \eqref{mp_linearized}}

We now have all of the tools needed to construct an isomorphism corresponding to the global-in-time well-posedness of \eqref{mp_linearized}.

\begin{thm}\label{linear iso} 
Let $s\in\R^+\cup\cb{0}$ and $q \in \R^+$ satisfy \eqref{q_strong}.  Let $\zeta_0\in H^q_\|\p{\T^3;\R^3}$ and let $\zeta$ be the corresponding potential microflow as in Definition \ref{potential flow}.  Define the map
\begin{multline}
\mathcal{M}:  \p{L^2\p{\R^+;\z{H}^{2+s}_\perp\p{\T^3;\R^3}} \cap  H^1\p{\R^+;\z{H}^s_\perp\p{\T^3;\R^3}}} \\
\times \p{L^2\p{\R^+;H^{2+s}\p{\T^3;\R^3} } \cap H^1\p{\R^+;H^s\p{\T^3;\R^3}}} \\
\to \z{H}^{1+s}_\perp\p{\T^3;\R^3} \times H^{1+s}\p{\T^3;\R^3} \times L^2\p{\R^+;\z{H}^s_\perp \p{\T^3;\R^3}}\times L^2\p{\R^+;H^s\p{\T^3;\R^3}}
\end{multline}
via 
\begin{equation}
 \mathcal{M}(u,\omega) = 
\begin{pmatrix}
      u\p{0}\\
    \omega \p{0}\\
    \varrho \pd_t u- \p{\ep + \frac{\kappa}{2}} \Delta u- \kappa \curl\omega\\
    j( \pd_t\omega +u\cdot \grad\zeta ) -\p{\al+\gam}\Delta\omega - \p{\frac{\al}{3} + \be - \gam}
    \grad\Div\omega + 2\kappa \omega - \kappa \curl u 
\end{pmatrix},
\end{equation}
where $u\p{0}=\lim_{t\to0^+}u\p{t}$ and $\omega\p{0}=\lim_{t\to0^+}\omega\p{t}$ are understood in the $H^{1+s}\p{\T^3;\R^3}$ topology via Proposition \ref{interpolation}.  Then $\mathcal{M}$ is well-defined and yields a bounded linear isomorphism.
\end{thm}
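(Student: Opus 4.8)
The plan is to establish, in order, that $\mathcal{M}$ is a bounded linear map, that it is injective, and that it is surjective; since the promotion theorems already supply a quantitative inverse bound, this will directly give that $\mathcal{M}$ is an isomorphism without even invoking the open mapping theorem. For boundedness, linearity is immediate from the formula defining $\mathcal{M}$. For $\p{u,\omega}$ in the domain, Proposition \ref{interpolation} shows that, after modification on a null set, $\p{u,\omega}$ is a bounded continuous curve into $\z{H}^{1+s}_\perp\p{\T^3;\R^3}\times H^{1+s}\p{\T^3;\R^3}$, so the traces $u\p{0}$ and $\omega\p{0}$ are well-defined there and controlled by the domain norm. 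The terms $\pd_t u$, $\Delta u$, $\pd_t\omega$, $\Delta\omega$, and $\grad\Div\omega$ map the domain boundedly into $L^2\p{\R^+;\z{H}^s_\perp}$, respectively $L^2\p{\R^+;H^s}$, directly from the definition of the domain spaces, while $\curl\omega$ and $\curl u$ contribute terms that are in fact one spatial derivative better (and automatically divergence-free and mean-zero). The only term requiring genuine estimation is the transport contribution $j\,u\cdot\grad\zeta$: here I would split on $s\le 1/2$ versus $s>1/2$ exactly as in the proof of Theorem \ref{promotion 2}, combining the product estimate of Proposition \ref{products} with the smoothing bound of Proposition \ref{type 2 flow smoothing} (equivalently the estimates in Theorem \ref{micropotential_exist}) for $\zeta$, to obtain a bound $\norm{u\cdot\grad\zeta}_{L^2\p{\R^+;H^s}}\le C\norm{u}_{L^\infty\p{\R^+;\z{H}^{1+s}_\perp}}$ with $C$ depending on $\zeta_0$, and the right-hand side is again controlled by the domain norm via Proposition \ref{interpolation}. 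This shows $\mathcal{M}$ is a bounded linear operator.

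For injectivity, I would argue as follows: suppose $\mathcal{M}\p{u,\omega}=0$. The domain of $\mathcal{M}$ embeds into $\bigcap_{T\in\R^+}\big(L^2\p{\p{0,T};\z{H}^{2+s}_\perp}\cap H^1\p{\p{0,T};\z{H}^s_\perp}\big)$ and likewise for $\omega$, and the relation $\mathcal{M}\p{u,\omega}=0$ says precisely that $\p{u,\omega}$ is a strong solution of \eqref{mp_linearized} with vanishing data and forcing, the data being attained in the $H^{1+s}$ topology. Since \eqref{q_strong} is stronger than the constraint on $q$ in Definition \ref{locally integrable solution}, the pair $\p{u,\omega}$ is a locally integrable solution, and Theorem \ref{promotion 1} then yields $\norm{u}_{L^\infty\z{H}^s_\perp}+\norm{\omega}_{L^\infty H^s}+\norm{u}_{L^2\z{H}^{1+s}_\perp}+\norm{\omega}_{L^2 H^{1+s}}\le\mathcal{C}\cdot 0=0$, so $\p{u,\omega}=0$.

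For surjectivity, given $\p{u_0,\omega_0,f,g}$ in the codomain, I would invoke Corollary \ref{existence of locally integrable} — which applies precisely because $q$ obeys \eqref{q_strong} — to produce a locally integrable solution $\p{u,\omega}$ of \eqref{mp_linearized} with this data, and then Theorem \ref{promotion 2} to upgrade its temporal regularity and integrability so that $\p{u,\omega}$ lies in the domain of $\mathcal{M}$, together with the estimate $\norm{\p{u,\omega}}_{\m{domain}}\le\mathcal{K}\norm{\p{u_0,\omega_0,f,g}}_{\m{codomain}}$. Since $\p{u,\omega}$ is by construction a strong solution of \eqref{mp_linearized} realizing the prescribed data in $H^{1+s}$, we have $\mathcal{M}\p{u,\omega}=\p{u_0,\omega_0,f,g}$, so $\mathcal{M}$ is onto. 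Combined with injectivity, $\mathcal{M}$ is a continuous linear bijection, and the estimate from Theorem \ref{promotion 2} is exactly the boundedness of $\mathcal{M}^{-1}$; hence $\mathcal{M}$ is a bounded linear isomorphism.

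I do not expect a serious obstacle: essentially all of the analytic difficulty has been front-loaded into Section \ref{sec_para_iso} and the two promotion theorems, so the work remaining here is organizational. The only genuine care needed is to verify that membership in the domain of $\mathcal{M}$ really does place a solution inside the class of Definition \ref{locally integrable solution} — so that the uniqueness statement and the energy estimates apply — and that the transport term $u\cdot\grad\zeta$ indeed lands in $L^2\p{\R^+;H^s}$. Both points are handled by the embedding $L^2\p{\R^+;\mathcal{X}}\cap H^1\p{\R^+;\mathcal{Y}}\emb\bigcap_{T\in\R^+}\big(L^2\p{\p{0,T};\mathcal{X}}\cap H^1\p{\p{0,T};\mathcal{Y}}\big)$ together with the product estimates cited above.
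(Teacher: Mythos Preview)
Your proposal is correct and follows essentially the same route as the paper: well-definedness via the product estimate for $u\cdot\grad\zeta$, injectivity from Theorem~\ref{promotion 1}, and surjectivity from Corollary~\ref{existence of locally integrable} combined with Theorem~\ref{promotion 2}. The only cosmetic difference is that the paper bounds $\norm{u\cdot\grad\zeta}_{L^2H^s}$ directly by $\norm{u}_{L^2H^{2+s}}\norm{\grad\zeta}_{L^\infty H^{q-1}}$ (which works uniformly in $s$ under \eqref{q_strong} without a case split), whereas you route through $\norm{u}_{L^\infty\z{H}^{1+s}_\perp}$ as in the proof of Theorem~\ref{promotion 2}; both are valid.
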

\begin{proof}
We begin by showing that $\mathcal{M}$ is well-defined.  The only term we need to examine is the product term $u\cdot \nabla \zeta$.  The inclusion $u\cdot \nabla \zeta \in  L^2\p{\R^+;H^s\p{\T^3;\R^3}}$ follows directly from the assumptions on $q$ and $s$ and Propositions \ref{products} and \ref{type 2 flow smoothing}:
\begin{equation}
 \norm{u \cdot \nabla \zeta}_{L^2\p{\R^+; H^s}} \le c \norm{u}_{L^2\p{\R^+; H^{s+2}}} \norm{\nabla \zeta}_{L^\infty\p{\R^+; H^{q-1}}} \le c' \norm{u}_{L^2\p{\R^+; H^{s+2}}} \norm{\zeta_0}_{ H^{q}}.
\end{equation}
Where $c,c'\in  \R^+$ are constants depending only on $s$, $q$, and various coefficients of viscosity. Thus $\mathcal{M}$ is well-defined.

Note that $\mathcal{M}(u,\omega) = (u_0,\omega_0,f,g)$ implies that $(u,\omega)$ is a locally integrable solution to \eqref{mp_linearized} with initial data $(u_0,\omega_0)$ and forcing/microtorquing $(f,g)$.  Consequently, the injectivity of $\mathcal{M}$ follows from the estimate in Theorem \ref{promotion 1} and the surjectivity of $\mathcal{M}$ follows from Corollary \ref{existence of locally integrable} and Theorems \ref{promotion 1} and \ref{promotion 2}.  Thus $\mathcal{M}$ is an isomorphism, and the claim is proved.

\end{proof}

\section{Nonlinear analysis}\label{sec_nonlinear}

In this section we first identify the natural nonlinear mapping associated to problem \eqref{micro_polar} and prove that it is globally smooth with respect to our choices of domain and codomain. Then, we show that the linear well-posedness achieved in the previous section is sufficiently strong to prove, with the aid of the inverse function theorem, that around sufficiently regular potential microflows this mapping is in fact a smooth diffeomorphism. Finally, we answer some questions related to the nonlinear stability and attractiveness of the potential microflows.
 
\subsection{Construction of solutions to \eqref{micro_polar} near potential microflows }

We now turn our attention to solving \eqref{micro_polar}.  

\begin{defn}\label{differential operator}
Let $s\in\R^+\cup\cb{0}$.  We define the nonlinear map associated to the problem \eqref{micro_polar} to be
\begin{multline}
\mathcal{Q}:   \p{  L^2\p{\R^+;\z{H}^{2+s}_\perp\p{\T^3;\R^3} }\cap  H^1\p{\R^+;\z{H}^s_\perp\p{\T^3;\R^3}}} \\
\times \p{L^2\p{\R^+;H^{2+s}\p{\T^3;\R^3}}\cap H^1\p{\R^+;H^s\p{\T^3;\R^3}}}\times L^2\p{\R^+;\z{H}^{1+s}\p{\T^3;\R}} \\
\to \z{H}^{1+s}_\perp\p{\T^3;\R^3} \times H^{1+s}\p{\T^3;\R^3} \times L^2\p{\R^+;\z{H}^{s}\p{\T^3;\R^3}} \times L^2\p{\R^+;H^s\p{\T^3;\R^3}}
\end{multline}
given by 
\begin{equation} 
\mathcal{Q}\p{u,\omega,p}=
\begin{pmatrix}
  u\p{0}\\
\omega\p{0}\\
\varrho\p{ \pd_t  + u \cdot \grad }u -\p{\ep + \frac{\kappa}{2}} \Delta u- \kappa \curl\omega  +\grad p\\
j(\pd_t\omega + u \cdot \nabla \omega)   -\p{\al+\gam}\Delta \omega- \p{\frac{\alpha}{3} + \beta - \gamma }   \grad\Div\omega+ 2\kappa \omega - \kappa \curl u
\end{pmatrix}
,
\end{equation}
where $u\p{0}=\lim_{t\to0^+}u\p{t}$ and $\omega\p{0}=\lim_{t\to0^+}\omega\p{t}$ are understood in the $H^{1+s}\p{\T^3;\R^3}$ topology as in Proposition \ref{interpolation}. 
\end{defn}

The following result shows, among other things, that $\mathcal{Q}$ is well-defined.

\begin{prop}\label{diff op well defined}
Let $s \in \R^+ \cup \{0\}$.  Then the map $\mathcal{Q}$ from Definition \ref{differential operator} is well-defined, smooth, and satisfies
\begin{equation} \label{diff op well defined 0}
D\mathcal{Q}\p{v,\chi,q}\p{u,\omega,p}=
\begin{pmatrix}
  u\p{0}\\
\omega\p{0}\\
\varrho\p{ \pd_t u  + v \cdot \grad u + u \cdot \grad v}   -\p{\ep + \frac{\kappa}{2}} \Delta u- \kappa \curl\omega +\grad p\\
j(\pd_t\omega + v \cdot \nabla \omega + u \cdot \nabla \chi)   -\p{\al+\gam}\Delta \omega- \p{\frac{\alpha}{3} + \beta - \gamma }   \grad\Div\omega+ 2\kappa \omega - \kappa \curl u
\end{pmatrix}
\end{equation}
for every $\p{v,\chi,q}$ and $\p{u,\omega,p}$ in the domain of $\mathcal{Q}$.
\end{prop}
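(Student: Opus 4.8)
The plan is to write $\mathcal{Q}$ as the sum of a bounded linear map and a bounded bilinear map, and then to invoke the standard calculus of multilinear maps between Banach spaces. Let $Z$ and $Y$ denote the domain and codomain of $\mathcal{Q}$, and write $\mathcal{Q}=\mathcal{L}+\mathcal{N}$, where
\[
\mathcal{N}\p{u,\omega,p}=\p{0,\,0,\,\varrho\,u\cdot\grad u,\,j\,u\cdot\grad\omega}
\]
and $\mathcal{L}$ is obtained from $\mathcal{Q}$ by deleting these two transport terms. The first step is to check that $\mathcal{L}:Z\to Y$ is bounded and linear. The trace components $\p{u,\omega,p}\mapsto\p{u\p{0},\omega\p{0}}$ map boundedly into $\z{H}^{1+s}_\perp\p{\T^3;\R^3}\times H^{1+s}\p{\T^3;\R^3}$ by Proposition \ref{interpolation}, while each remaining term is a constant-coefficient differential operator of order at most two (including the zeroth-order term $2\kappa\omega$ and the first-order term $\grad p$) that maps the space-time spaces in $Z$ continuously into those in $Y$: the spatial Sobolev index drops by at most two, $\pd_t$ consumes one unit of temporal regularity, and $\curl$, $\grad\Div$, and $\grad$ produce outputs that are solenoidal/mean-zero or conservative, as required to land in the $\z{H}^s$ and $H^s$ factors of $Y$. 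Being bounded and linear, $\mathcal{L}$ is $C^\infty$ with $D\mathcal{L}\p{z}\p{h}=\mathcal{L}\p{h}$ for all $z,h$.

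The second step is to realize $\mathcal{N}$ as the diagonal of a bounded bilinear map. Define $\mathcal{B}:Z\times Z\to Y$ via
\[
\mathcal{B}\p{\p{v,\chi,q},\p{u,\omega,p}}=\p{0,\,0,\,\varrho\,v\cdot\grad u,\,j\,v\cdot\grad\omega},
\]
so that $\mathcal{N}\p{z}=\mathcal{B}\p{z,z}$. The heart of the proposition is the boundedness of $\mathcal{B}$, which amounts to an estimate of the form
\[
\norm{v\cdot\grad u}_{L^2\p{\R^+;\z{H}^s}}+\norm{v\cdot\grad\omega}_{L^2\p{\R^+;H^s}}\le C\,\norm{\p{v,\chi,q}}_Z\,\norm{\p{u,\omega,p}}_Z .
\]
Here I would first use Proposition \ref{interpolation} to obtain $v\in L^\infty\p{\R^+;H^{1+s}\p{\T^3;\R^3}}$ with $\norm{v}_{L^\infty H^{1+s}}\le C\norm{\p{v,\chi,q}}_Z$, and then combine the Sobolev multiplication estimate of Proposition \ref{products}, e.g. in the form $\norm{fg}_{H^s}\le C\norm{f}_{H^{1+s}}\norm{g}_{H^{1+s}}$ on $\T^3$, with $\grad u,\grad\omega\in L^2\p{\R^+;H^{1+s}\p{\T^3;\R^3}}$ to conclude
\[
\norm{v\cdot\grad u}_{L^2 H^s}\le C\,\norm{v}_{L^\infty H^{1+s}}\norm{u}_{L^2 H^{2+s}},
\]
and similarly for $v\cdot\grad\omega$. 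I would also record that $\int_{\T^3}v\cdot\grad u=0$ since $\Div v=0$, so the third component genuinely lands in the mean-zero space $\z{H}^s$. This multiplication estimate is the only step carrying real content; depending on the precise form of Proposition \ref{products}, a case split on the size of $s$ (for instance at $s=1/2$) may be convenient, exactly as in the proofs of Theorems \ref{promotion 1} and \ref{promotion 2}, and this is the anticipated main obstacle.

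Granting the boundedness of $\mathcal{B}$, the rest is routine soft analysis. A bounded bilinear map is real-analytic, hence $C^\infty$, with $D\mathcal{N}\p{z}\p{h}=\mathcal{B}\p{z,h}+\mathcal{B}\p{h,z}$, with $D^2\mathcal{N}$ constant, and with $D^k\mathcal{N}=0$ for $k\ge 3$. Evaluating at $z=\p{v,\chi,q}$ and $h=\p{u,\omega,p}$ gives
\[
D\mathcal{N}\p{v,\chi,q}\p{u,\omega,p}=\p{0,\,0,\,\varrho\p{v\cdot\grad u+u\cdot\grad v},\,j\p{v\cdot\grad\omega+u\cdot\grad\chi}},
\]
and adding $D\mathcal{L}\p{u,\omega,p}=\mathcal{L}\p{u,\omega,p}$ reproduces exactly the formula \eqref{diff op well defined 0}. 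Finally, $\mathcal{Q}=\mathcal{L}+\mathcal{N}$ is a sum of $C^\infty$ maps, hence $C^\infty$ and in particular well-defined, completing the proof.
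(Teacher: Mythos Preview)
Your approach is correct and essentially the same as the paper's: both reduce everything to showing that the bilinear transport map $\mathcal{B}$ is bounded from $Z\times Z$ into $Y$, after which smoothness and the derivative formula \eqref{diff op well defined 0} are immediate (the paper just phrases this as computing the remainder $\mathcal{Q}(v+u,\dotsc)-\mathcal{Q}(v,\dotsc)-D\mathcal{Q}(v,\dotsc)(u,\dotsc)$ and observing it is purely quadratic).

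One small point worth tightening: the specific product inequality you invoke, $\norm{fg}_{H^s}\le C\norm{f}_{H^{1+s}}\norm{g}_{H^{1+s}}$, does not follow from Proposition~\ref{products} as stated when $0\le s\le 1/2$, since then $1+s\le 3/2$ and neither factor lies strictly above the critical index. The paper sidesteps any case split by placing one factor at regularity $2+s>3/2$, bounding
\[
\norm{v\cdot\grad u}_{H^s}\le C\,\norm{v}_{H^{2+s}}\norm{\grad u}_{H^s}\le C\,\norm{v}_{H^{2+s}}\norm{u}_{H^{1+s}},
\]
and then pairing $\norm{v}_{L^2 H^{2+s}}$ with $\norm{u}_{L^\infty H^{1+s}}$ (the latter via Proposition~\ref{interpolation}). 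This works uniformly for all $s\ge 0$, so your anticipated case split is unnecessary.
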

\begin{proof}
The well-definedness of the first two components of $\mathcal{Q}$ follows from Proposition \ref{interpolation}.  To complete the proof that $\mathcal{Q}$ is well-defined, we only have to worry about the product terms, as the rest all clearly belong to the codomain.  More precisely, given $\p{u,\omega}$ belonging to the first two components of the domain of $\mathcal{Q}$, we need to prove  that $u\cdot \grad u\in L^2\p{\R^+;\z{H}^s \p{\T^3;\R^3}}$ and $u\cdot \grad\omega \in L^2\p{\R^+;H^s\p{\T^3;\R^3}}$. 

As a first step, we note that $u\cdot \grad u$ has vanishing spatial average as a consequence of $u$ being solenoidal:
\begin{equation} 
\int_{\T^3}u\cdot \grad u =  -\int_{\T^3}u \Div u=0.
\end{equation}
Next we use Propositions \ref{products} and \ref{interpolation} to estimate
\begin{multline} 
\int_{\R^+}\norm{u\p{t} \cdot \grad u\p{t}}^2_{\z{H}^s}\;\m{d}t \le c \int_{\R^+} \norm{u\p{t}}^2_{\z{H}^{2+s}} \norm{u\p{t}}_{\z{H}^{1+s}}^2\;\m{d}t \\
\le c'\norm{u}^2_{L^\infty\p{\R^+;\z{H}^{1+s}}}\norm{u}^2_{L^2\p{\R^+;\z{H}^{2+s}}} \le \tilde{c} \norm{u}_{L^2\p{\R^+;\z{H}^{2+s}}\cap H^1\p{\R^+;\z{H}^s}}^4 < \infty,
\end{multline}
where $c,c',
\tilde{c}\in\R^+$ are constants depending on $s$. A similar  computation shows the inclusion 
\begin{equation}u\cdot \grad\omega \in L^2\p{\R^+;H^s\p{\T^3;\R^3}},.\end{equation} and we conclude that $\mathcal{Q}$ is indeed well-defined.

Next, suppose that $\p{v,\chi,q}$ and $\p{u,\omega,p}$ belong to the domain of $\mathcal{Q}$.  With an abuse of notation, we let $D\mathcal{Q}\p{v,\chi,q}$ be the linear operator defined by \eqref{diff op well defined 0}.  The same arguments used above show that this defines a bounded linear  map.  Then we compute 
\begin{equation} 
\mathcal{Q}\p{v+u,\chi+\omega,q+p}-\mathcal{Q}\p{v,\chi,q}-D\mathcal{Q}\p{v,\chi,q}\p{u,\omega,p}=\p{0,0,u\cdot \grad u,u\cdot \grad\omega},
\end{equation}
and so we may again use the estimates from Propositions  \ref{products} and \ref{interpolation} to deduce that $\mathcal{Q}$ is differentiable with derivative $D\mathcal{Q}$.  The map $\p{v,\chi,q} \mapsto D\mathcal{Q}\p{v,\chi,q}$ is affine and continuous, thus smooth. Hence we conclude that $\mathcal{Q}$ is smooth.

\end{proof}

We now have all of the tools needed to produce solutions to \eqref{micro_polar} with an application of the inverse function theorem.

\begin{thm}\label{inverse function}
Let $s\in\R^+\cup\cb{0}$ and $q \in \R^+$ satisfy \eqref{q_strong}.  Let $\zeta_0\in H^q_\|\p{\T^3;\R^3}$ and let $\zeta$ be the corresponding potential microflow as in Definition \ref{potential flow}.  Then there exist open sets 
\begin{equation}\label{inverse function 00}
\mathcal{W} \subseteq \z{H}^{1+s}_\perp\p{\T^3;\R^3} \times H^{1+s}\p{\T^3;\R^3} \times L^2\p{\R^+;\z{H}^s\p{\T^3;\R^3}}\times L^2\p{\R^+;H^s\p{\T^3;\R^3}} 
\end{equation}
and 
\begin{multline}\label{inverse function 01}
 \mathcal{V} \subseteq    \p{ L^2\p{\R^+;\z{H}^{2+s}_\perp\p{\T^3;\R^3}} \cap  H^1\p{\R^+;\z{H}^s_\perp\p{\T^3;\R^3}}} \\
 \times \p{L^2\p{\R^+;H^{2+s}\p{\T^3;\R^3}} \cap H^1\p{\R^+;H^s\p{\T^3;\R^3}}}\times L^2\p{\R^+;\z{H}^{1+s}\p{\T^3;\R}}
\end{multline}
such that the following hold.  
\begin{enumerate}
 \item $\p{0,\zeta_0,0,0} \in \mathcal{W}$ and $\p{0,\zeta,0}  \in \mathcal{V}$ with $\mathcal{Q}\p{0,\zeta,0}=\p{0,\zeta_0,0,0}$.
 \item $\mathcal{Q}\p{\mathcal{V}} = \mathcal{W}$ and $\mathcal{Q} : \mathcal{V} \to \mathcal{W}$ is a smooth diffeomorphism.
  \item $\mathcal{Q}: \mathcal{V}\to \mathcal{W}$ is bi-Lipschitz, i.e. there exists constants $C_0,C_1\in\R^+$ such that if we abbreviate $X$ and $Y$ for the domain and codomain of $\mathcal{Q}$, respectively, then 
 \begin{equation}\label{stable_equation}
 C_0 \norm{\p{u,\omega,p} - \p{v,\chi,q}}_X \le \norm{\mathcal{Q}\p{u,\omega,p} - \mathcal{Q}\p{v,\chi,q}}_{Y} \le C_1  \norm{\p{u,\omega,p} - \p{v,\chi,q}}_X  
 \end{equation}
 for all $\p{u,\omega,p},\p{v,\chi,q} \in \mathcal{V}\subseteq X$.
  \item For each $(u_0,\omega_0,f,g) \in \mathcal{W}$ the triple $(u,\omega,p) = \mathcal{Q}^{-1}(u_0,\omega_0,f,g) \in \mathcal{V}$ is the unique (in $\mathcal{V}$) strong solution to  \eqref{micro_polar}, achieving the initial data $u_0 = \lim_{t\to 0} u(t)$ and $\omega_0 = \lim_{t\to 0}\omega(t)$ in the $H^{1+s}$ topology as in Proposition \ref{interpolation}.  

\end{enumerate}
\end{thm}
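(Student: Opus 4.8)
The plan is to obtain $\mathcal{V}$ and $\mathcal{W}$ from the Banach-space inverse function theorem applied to the map $\mathcal{Q}$ of Definition \ref{differential operator} at the base point $\p{0,\zeta,0}$. Proposition \ref{diff op well defined} already gives that $\mathcal{Q}$ is well-defined and smooth on its domain, and the regularity of $\zeta$ from item (1) of Theorem \ref{micropotential_exist} together with \eqref{q_strong} (which forces $q\ge s+1$) ensures that $\p{0,\zeta,0}$ indeed lies in that domain. The first step is the bookkeeping computation $\mathcal{Q}\p{0,\zeta,0}=\p{0,\zeta_0,0,0}$: the first two components are $0$ and $\zeta\p{0}=\zeta_0$; the third vanishes because $\curl\zeta=0$ (as $\zeta$ is conservative) and the remaining terms are $0$; and the fourth vanishes precisely by the parabolic identity for the potential microflow in item (3) of Theorem \ref{micropotential_exist}.

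The core of the argument is to show that $D\mathcal{Q}\p{0,\zeta,0}$ is a bounded linear isomorphism from the domain of $\mathcal{Q}$ onto its codomain. Evaluating \eqref{diff op well defined 0} at $\p{v,\chi,q}=\p{0,\zeta,0}$ kills the convective terms carrying $v$, and the linearization sends $\p{u,\omega,p}$ to the quadruple with components $u\p{0}$, $\omega\p{0}$, $\varrho\pd_t u-\p{\ep+\kappa/2}\Delta u-\kappa\curl\omega+\grad p$, and the fourth component of the operator $\mathcal{M}$ from Theorem \ref{linear iso}; boundedness of this map is part of Proposition \ref{diff op well defined}. To invert it I would peel off the pressure with the Leray projector: since $u$ is solenoidal, the terms $\pd_t u$, $\Delta u$, and $\curl\omega$ are solenoidal too, so the $\P$-part of the third component above is $\varrho\pd_t u-\p{\ep+\kappa/2}\Delta u-\kappa\curl\omega$, which is exactly the third component of $\mathcal{M}\p{u,\omega}$, while its $\p{I-\P}$-part is $\grad p$. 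Conversely, given $\p{u_0,\omega_0,f,g}$ in the codomain, write $f=\P f+\p{I-\P}f$ (here $\P f\in\z{H}^s_\perp$ and $\p{I-\P}f\in H^s_\|$), apply $\mathcal{M}^{-1}$ from Theorem \ref{linear iso} to $\p{u_0,\omega_0,\P f,g}$ to get $\p{u,\omega}$, and invoke Proposition \ref{potential_map} to write $\p{I-\P}f=\grad p$ for a unique $p\in L^2\p{\R^+;\z{H}^{1+s}\p{\T^3;\R}}$ depending boundedly on $f$. This produces a bounded two-sided inverse of $D\mathcal{Q}\p{0,\zeta,0}$, so it is an isomorphism, and the inverse function theorem then yields open sets $\mathcal{V}\ni\p{0,\zeta,0}$ and $\mathcal{W}=\mathcal{Q}\p{\mathcal{V}}\ni\p{0,\zeta_0,0,0}$ on which $\mathcal{Q}$ is a smooth diffeomorphism onto $\mathcal{W}$; this is items (1) and (2).

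For item (3) I would shrink $\mathcal{V}$ to a convex ball small enough that $D\mathcal{Q}\p{w}$ stays in a small operator-norm neighborhood of the invertible operator $D\mathcal{Q}\p{0,\zeta,0}$ for all $w\in\mathcal{V}$; then, for $x,y\in\mathcal{V}$, the operator $A_{x,y}=\int_0^1 D\mathcal{Q}\p{y+t\p{x-y}}\,\m{d}t$ is invertible with $\norm{A_{x,y}}$ and $\norm{A_{x,y}^{-1}}$ bounded uniformly in $x,y$, and the fundamental theorem of calculus gives $\mathcal{Q}\p{x}-\mathcal{Q}\p{y}=A_{x,y}\p{x-y}$, which immediately yields the two-sided bound \eqref{stable_equation} with $\mathcal{W}$ replaced by the (still open) image $\mathcal{Q}\p{\mathcal{V}}$. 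Finally, item (4) is a matter of unwinding definitions: for $\p{u_0,\omega_0,f,g}\in\mathcal{W}$ the preimage $\p{u,\omega,p}=\mathcal{Q}^{-1}\p{u_0,\omega_0,f,g}$ automatically satisfies $\Div u=0$ because $u$ lies in a solenoidal space, the equation $\mathcal{Q}\p{u,\omega,p}=\p{u_0,\omega_0,f,g}$ is exactly the momentum and microrotation equations of \eqref{micro_polar} together with attainment of $u_0,\omega_0$ in the $H^{1+s}$ topology, and uniqueness within $\mathcal{V}$ is injectivity of $\mathcal{Q}$ there. I expect the only mildly delicate point to be the Leray splitting in the middle paragraph, which reintroduces the pressure degree of freedom lost in passing to the reduced isomorphism $\mathcal{M}$; all the genuinely hard analysis — the global-in-time linear theory of Theorem \ref{linear iso} and the smoothness of $\mathcal{Q}$ in Proposition \ref{diff op well defined} — is already in place.
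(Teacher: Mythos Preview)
Your proposal is correct and follows essentially the same approach as the paper: both verify that $D\mathcal{Q}\p{0,\zeta,0}$ is an isomorphism by splitting off the pressure via the Leray projector and invoking the linear isomorphism $\mathcal{M}$ of Theorem \ref{linear iso}, then apply the inverse function theorem. The only cosmetic differences are that the paper proves injectivity and surjectivity of $D\mathcal{Q}\p{0,\zeta,0}$ separately rather than exhibiting a two-sided inverse, and it absorbs the bi-Lipschitz claim of item (3) into the citation of the inverse function theorem rather than writing out the mean-value integral argument you sketch.
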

\begin{proof}
Write $X$ and $Y$ for the domain and codomain of $\mathcal{Q}$.  Proposition \ref{diff op well defined} ensures that $\mathcal{Q}: X \to Y$ is smooth and that we have the identity for $\p{u,\omega,p}\in X$ \begin{equation}
D\mathcal{Q}\p{0,\zeta,0}\p{u,\omega,p} = \mathcal{M}\p{u,\omega}+\p{0,0,\grad p,0},
\end{equation}
where $\mathcal{M}$ is the bounded linear isomorphism from Theorem \ref{linear iso}.

To see that this map is injective, suppose that $\p{u,\omega,p}$ belongs to the kernel of $D\mathcal{Q}\p{0,\zeta,0}$. The third component of the equation $\p{0,0,0,0}=D\mathcal{Q}\p{0,\zeta,0}\p{u,\omega,p}$ reads 
\begin{equation}
\varrho\pd_tu+\p{\ep+\f{\kappa}{2}}\Delta u-\kappa\curl\omega=-\grad p.
\end{equation}
At almost every time in $\R^+$ the left hand side belongs to the image of the Leray projector $\P$ over $H^s\p{\T^3;\C^3}$ and the right hand side belongs to the kernel. Since $\P$ induces an orthogonal decomposition $H^s\p{\T^3;\R^3}\cong H^s_\perp\p{\T^3;\R^3}\oplus H^s_\|\p{\T^3;\R^3}$ (see Definition \ref{Leray}), in fact both sides of this equation vanish almost everywhere. Since $\grad: L^2\p{\R^+;\z{H}^{1+s}\p{\T^3;\R}}\to L^2\p{\R^+;H^s_\|\p{\T^3;\R^3}}$ is a bounded linear isomorphism by Proposition \ref{potential_map}, we deduce that $p=0$.  We now learn that $\mathcal{M}\p{u,\omega}=\p{0,0,0,0}$, so  an application of Theorem \ref{linear iso} implies $u=\omega=0$.  Thus, $D\mathcal{Q}\p{0,\zeta,0}$ has trivial kernel and is therefore injective.

We now turn to the proof of surjectivity.  Pick data/forcing/microtorquing $\p{u_0,\omega_0,f,g}\in Y$. Then we have the inclusion
\begin{equation}
    \p{u_0,\omega_0,\P f,g}\in H^{1+s}_\perp\p{\T^3;\R^3}\times H^{1+s}\p{\T^3;\R^3}\times L^2\p{\R^+;\z{H}_\perp^s\p{\T^3;\R^3}}\times L^2\p{\R^+;H^s\p{\T^3;\R^3}}.
\end{equation}
Again using Theorem \ref{linear iso}, we can define $\p{u,\omega}=\mathcal{M}^{-1}\p{u_0,\omega_0,\P f,g}$.  We then use Proposition \ref{potential_map} to define $p=\Pi\p{I-\P}f$.  It follows that $\p{u,\omega,p}\in X$, so
\begin{multline}
    D\mathcal{Q}\p{0,\zeta,0}\p{u,\omega,p}=\mathcal{M}\mathcal{M}^{-1}\p{u_0,\omega_0,\P f,g}+\p{0,0,\grad\Pi\p{I-\P}f,0} \\
    =\p{u_0,\omega_0,\P f,g} + \p{u_0,\omega_0,(I-\P)f,g}
    =\p{u_0,\omega_0,f,g},
\end{multline}
and surjectivity is proved.  

We now know that $D \mathcal{Q}(0,\zeta,0)$ is an isomorphism.  Consequently, the inverse function theorem (see, for instance, Theorem 2.5.2 in~\cite{abraham_marsden_ratiu}) provides for the existence of open sets $\mathcal{W} \subseteq Y$ and $\mathcal{V} \subseteq X$ satisfying the first three stated results.  The fourth item then follows from the second item and the definition of $\mathcal{Q}$.

\end{proof}

\subsection{Stability and asymptotic stability}
 
We now turn our attention to the stability and attractiveness of the potential microflows and nearby solutions.

\begin{thm}\label{s_a}
Let $s,q\in\R^+\cup\cb{0}$ and $\zeta_0,\zeta$ be as in Theorem \ref{inverse function}.  Let $X$ and $Y$ denote the domain and codomain, respectively, of the map $\mathcal{Q}$ from Definition \ref{differential operator}.  Let $\mathcal{W} \subseteq Y$ and $\mathcal{V} \subseteq X$ be the open sets from Theorem \ref{inverse function}.  Fix $\p{u_0,\omega_0,f,g} \in \mathcal{W}$ and let $\p{u,\omega,p} \in \mathcal{V}$ be the associated solution to \eqref{micro_polar} given by Theorem \ref{inverse function}.  Then the following hold.
\begin{enumerate}

\item \textbf{Stability}:  There exists a universal constant $B \in \R^+$ such that if $\p{v_0,\chi_0,\varphi,\psi}\in \mathcal{W}$ and $\p{v,\chi,q} \in \mathcal{V}$ is the associated solutions to \eqref{micro_polar} given by Theorem \ref{inverse function}, then  
\begin{equation}\label{s_a_00}
    \norm{\p{u-v,\omega-\chi,p-q}}_X \le B\norm{\p{u_0-v_0,\omega_0-\chi_0,f-\varphi,g-\psi}}_Y.
\end{equation}
In particular, the solution $(u,\omega,p)$ is stable.

\item \textbf{Attractiveness:} We have that 
\begin{equation}\label{s_a_01}
\lim_{t \to \infty} u\p{t} =0 \text{ in } \z{H}^{1+s}_\perp\p{\T^3;\R^3} \text{ and } 
\lim_{t \to \infty} \p{\omega\p{t}-\zeta\p{t}} =0 \text{ in } H^{1+s}\p{\T^3;\R^3}.
\end{equation}
Moreover, if $s\in\p{1/2,\infty}$, then
\begin{equation}\label{s_a_02}
    \lim_{t\to\infty}p\p{t}=0\text{  in  }\;\z{H}^{1+s}\p{\T^3;\R}\quad\lra\quad\lim_{t\to\infty}\p{I-\P}f\p{t}=0\text{  in  }H^s_\|\p{\T^3;\R^3}.
\end{equation}

\item \textbf{Exponential decay without forcing:}  Suppose that $f=g=0$.  Then there exists a universal constant $\lambda \in \R^+$ and a constant $\mathcal{K} \in \R^+$, depending on $\zeta_0$, $s$, $q$,  and the various physical parameters, such that for all $\vartheta \in [0,1]$ we have the exponential decay estimate 
\begin{equation}\label{s_a_05}
\norm{u\p{t}}^2_{\z{H}^{\p{1-\vartheta}\p{1+s}}_\perp}+\norm{\omega\p{t}-\zeta\p{t}}^2_{H^{\p{1-\vartheta}\p{1+s}}}  \le  
\mathcal{K} \exp\p{-\lambda \vartheta t} \p{ \norm{u_0}_{\z{H}^{1+s}_\perp}^2+\norm{\omega_0-\zeta_0}^2_{H^{1+s}}   }
\end{equation}
for almost every $t \in \R^+$.

\item \textbf{Algebraic decay with forcing:}  Suppose the forcing and microtorquing decay algebraically in the sense that there exist $\mu,C\in\R^+$ such that
\begin{equation}\label{s_a_alg}
 \int_{\T^3}\abs{f\p{t}}^2+\abs{g\p{t}}^2\le\f{C}{\p{1+t}^\mu} \text{ for a.e. }t \in \R^+.
\end{equation}
Then there exists a constant $\mathcal{K} \in \R^+$, depending on $\norm{u_0}_{\z{H}^{1+s}_\perp},$ $\norm{\omega_0-\zeta_0}^2_{H^{1+s}}$,  $\norm{f}_{L^2\z{H}^s}^2,$ $\norm{g}^2_{L^2H^s}$, $\mu,$ $C,$ $s,$ $q,$ and the various physical parameters, such that  for all $\vartheta\in\sb{0,1}$ we have the algebraic decay estimate 
\begin{equation}\label{s_a_03}
\norm{u\p{t}}^2_{\z{H}^{\p{1-\vartheta}\p{1+s}}_\perp}+\norm{\omega\p{t}-\zeta\p{t}}^2_{H^{\p{1-\vartheta}\p{1+s}}}   
\le \frac{\mathcal{K}}{(1+t)^{\mu \vartheta}}  \text{ for almost every } t \in \R^+.
\end{equation}

\item \textbf{Exponential decay with forcing:} Suppose the forcing and microtorquing decay exponentially in the sense that there exist $\mu, C\in\R^+$ such that
\begin{equation}\label{s_a_exp}
 \int_{\T^3}\abs{f\p{t}}^2+\abs{g\p{t}}^2\le C \exp\p{-\mu t} \text{ for a.e. }t \in \R^+.
\end{equation}
Then there exists a constant $\lambda \in \R^+$, depending on $\mu$ and the physical parameters, and a constant $\mathcal{K} \in \R^+$, depending on $\norm{u_0}_{\z{H}^{1+s}_\perp},$ $\norm{\omega_0-\zeta_0}^2_{H^{1+s}}$,  $\norm{f}_{L^2\z{H}^s}^2,$ $\norm{g}^2_{L^2H^s}$, $\mu,$ $C,$ $s,$ $q,$ and the various physical parameters, such that for all $\vartheta\in\sb{0,1}$ we have the exponential decay estimate
\begin{equation}\label{s_a_04}
\norm{u\p{t}}^2_{\z{H}^{\p{1-\vartheta}\p{1+s}}_\perp}+\norm{\omega\p{t}-\zeta\p{t}}^2_{H^{\p{1-\vartheta}\p{1+s}}}
\le \mathcal{K} \exp\p{-\lambda \vartheta t} 
\end{equation}
for almost every $t \in \R^+$.

\end{enumerate}
\end{thm}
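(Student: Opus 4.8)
The plan is to pass to the perturbative unknown $\tilde\omega:=\omega-\zeta$, recognize that $(u,\tilde\omega)$ solves the linear system \eqref{mp_linearized} with a self-consistent forcing, and then combine the quantitative bounds already packaged in Theorem~\ref{inverse function} with the energy--dissipation machinery of Section~\ref{sec_linearization} run at the base regularity $0$. Concretely, applying the Leray projector $\P$ to the velocity equation in \eqref{micro_polar} and using $\Div u=0$ shows that $(u,\tilde\omega)$ is a locally integrable solution, in the sense of Definition~\ref{locally integrable solution} with the parameter there taken to be $0$, of \eqref{mp_linearized} with forcing $\tilde f:=\P f-\varrho\P(u\cdot\grad u)$ and microtorquing $\tilde g:=g-j\,u\cdot\grad\tilde\omega$; both lie in the required $L^2$-in-time spaces by the product estimates in Proposition~\ref{diff op well defined}. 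Applying $I-\P$ instead gives $\grad p=(I-\P)f-\varrho(I-\P)(u\cdot\grad u)$, hence $p=\Pi\big[(I-\P)f-\varrho(I-\P)(u\cdot\grad u)\big]$ via Proposition~\ref{potential_map}. Item (1) then requires no work: \eqref{s_a_00} is precisely the left-hand inequality of \eqref{stable_equation} read for $\mathcal{Q}^{-1}$, with $B=C_0^{-1}$.

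For item (2), observe that $(u,\tilde\omega,p)=(u,\omega,p)-(0,\zeta,0)$ is a difference of elements of $\mathcal{V}$, hence lies in the vector space $X$; in particular $u\in L^2\z{H}^{2+s}_\perp\cap H^1\z{H}^s_\perp$ and $\tilde\omega\in L^2 H^{2+s}\cap H^1 H^s$. By Proposition~\ref{interpolation} the curves $t\mapsto u(t)$ and $t\mapsto\tilde\omega(t)$ are bounded and uniformly continuous into $\z{H}^{1+s}_\perp$ and $H^{1+s}$; therefore $t\mapsto\|u(t)\|_{\z{H}^{1+s}}^2+\|\tilde\omega(t)\|_{H^{1+s}}^2$ is bounded, uniformly continuous, and (being dominated by the squared $H^{2+s}$ norms) integrable on $\R^+$, which forces \eqref{s_a_01}. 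When $s>1/2$, Proposition~\ref{products} gives $\|(u\cdot\grad u)(t)\|_{H^s}\le c\|u(t)\|_{H^{1+s}}^2\to 0$, so, since $\Pi:H^s_\|\to\z{H}^{1+s}$ is bounded with bounded inverse $\grad$ (Proposition~\ref{potential_map}), the formula for $p$ shows $p(t)-\Pi(I-\P)f(t)\to 0$ in $\z{H}^{1+s}$; this yields the equivalence \eqref{s_a_02}.

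Items (3)--(5) are the crux. Applying Lemmas~\ref{ED} and~\ref{ED estimates} to the locally integrable solution $(u,\tilde\omega)$ at regularity $0$, and using that the quadratic self-interaction terms drop out ($\int_{\T^3}(u\cdot\grad u)\cdot u=0$ and $\int_{\T^3}(u\cdot\grad\tilde\omega)\cdot\tilde\omega=0$, both by $\Div u=0$), one obtains for $\mathscr{E}_0(t):=\tfrac\varrho2\|u(t)\|_{L^2}^2+\tfrac j2\|\tilde\omega(t)\|_{L^2}^2$ the differential inequality
\begin{equation*}
\mathscr{E}_0'(t)+\mathcal{C}_0\mathscr{E}_0(t)\le\int_{\T^3}f(t)\cdot u(t)+g(t)\cdot\tilde\omega(t)-\big(u(t)\cdot\grad\zeta(t)\big)\cdot\tilde\omega(t)\qquad\text{for a.e. }t.
\end{equation*}
Since $q>5/2$ by \eqref{q_strong}, Proposition~\ref{type 2 flow smoothing} gives $\|\zeta(t)\|_{H^q}\le Ce^{-(2\kappa/j)t}\|\zeta_0\|_{H^q}$, so Sobolev embedding controls the coupling term by $\tilde{C}e^{-(2\kappa/j)t}\mathscr{E}_0(t)$, while Cauchy's inequality controls the forcing term by $\tfrac{\mathcal{C}_0}{4}\mathscr{E}_0(t)+C(\|f(t)\|_{L^2}^2+\|g(t)\|_{L^2}^2)$; hence
\begin{equation*}
\mathscr{E}_0'(t)+\Big(\tfrac{3\mathcal{C}_0}{4}-\tilde{C}e^{-(2\kappa/j)t}\Big)\mathscr{E}_0(t)\le C\big(\|f(t)\|_{L^2}^2+\|g(t)\|_{L^2}^2\big).
\end{equation*}
If $f=g=0$, Gronwall's inequality together with the integrability of $e^{-(2\kappa/j)t}$ yields $\mathscr{E}_0(t)\le\mathcal{K}e^{-\lambda t}\mathscr{E}_0(0)$ for a universal $\lambda>0$; under \eqref{s_a_alg} or \eqref{s_a_exp} one first fixes $T^\star$ with $\tilde{C}e^{-(2\kappa/j)t}\le\mathcal{C}_0/4$ for $t\ge T^\star$, bounds $\mathscr{E}_0$ on $[0,T^\star]$ by Gronwall, and then solves the scalar linear ODE $\mathscr{E}_0'+\tfrac{\mathcal{C}_0}{2}\mathscr{E}_0\le(\text{decaying source})$ by Duhamel, splitting the integral at $t/2$, to get respectively $\mathscr{E}_0(t)\le\mathcal{K}(1+t)^{-\mu}$ and $\mathscr{E}_0(t)\le\mathcal{K}e^{-\lambda t}$. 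The $\vartheta$-family of estimates then comes for free: since $(1-\vartheta)(1+s)=\vartheta\cdot 0+(1-\vartheta)(1+s)$, the interpolation inequality for Sobolev norms and Hölder's inequality for the sum of the two contributions give
\begin{equation*}
\|u(t)\|_{\z{H}^{(1-\vartheta)(1+s)}_\perp}^2+\|\tilde\omega(t)\|_{H^{(1-\vartheta)(1+s)}}^2\le\big(c\,\mathscr{E}_0(t)\big)^{\vartheta}\Big(\|u(t)\|_{\z{H}^{1+s}_\perp}^2+\|\tilde\omega(t)\|_{H^{1+s}}^2\Big)^{1-\vartheta},
\end{equation*}
and the last factor is bounded, uniformly in $t$, by $\|u_0\|_{\z{H}^{1+s}_\perp}^2+\|\omega_0-\zeta_0\|_{H^{1+s}}^2+\|f\|_{L^2\z{H}^s}^2+\|g\|_{L^2 H^s}^2$, using Proposition~\ref{interpolation} and the right-hand inequality of \eqref{stable_equation}. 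Feeding in the decay of $\mathscr{E}_0$ produces \eqref{s_a_05}, \eqref{s_a_03}, and \eqref{s_a_04}.

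The delicate points are not individual estimates but structural: one must check that $(u,\tilde\omega)$ really meets the hypotheses under which Lemmas~\ref{ED}--\ref{ED estimates} were established (i.e. that $\tilde f,\tilde g$ have the needed integrability and $(u,\tilde\omega)$ is a bona fide locally integrable solution of \eqref{mp_linearized}), and one must absorb the space--time-dependent coupling $\int_{\T^3}(u\cdot\grad\zeta)\cdot\tilde\omega$. The key realization is that the damping coefficient $2\kappa$ in Definition~\ref{potential flow} makes $\zeta$ decay exponentially, so this coupling is an $L^1$-in-time multiplicative perturbation of the coercive dissipation and Gronwall closes with no smallness hypothesis on $\zeta_0$; the remaining content is purely interpolation between the decaying base energy and the uniformly bounded $H^{1+s}$ norm supplied by the inverse function theorem.
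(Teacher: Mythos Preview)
Your proposal is correct and follows essentially the same route as the paper: item~(1) is read off from the bi-Lipschitz estimate~\eqref{stable_equation}, item~(2) from the space--time embedding in Proposition~\ref{interpolation} together with the pressure identity $\grad p=(I-\P)(f-\varrho\,u\cdot\grad u)$, and items~(3)--(5) from the $L^2$ energy--dissipation inequality for $(u,\omega-\zeta)$ (where the quadratic self-interactions drop out by $\Div u=0$) combined with Gronwall, the exponential decay of $\zeta$, and Sobolev interpolation against the uniform $H^{1+s}$ bound. One small slip: when you bound $\|u(t)\|_{\z{H}^{1+s}_\perp}^2+\|\tilde\omega(t)\|_{H^{1+s}}^2$ uniformly by the data, you need the \emph{left}-hand inequality of~\eqref{stable_equation} (i.e.\ Lipschitz continuity of $\mathcal{Q}^{-1}$), not the right-hand one.
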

\begin{proof}
The first item is a mere rephrasing of \eqref{stable_equation} from Theorem \ref{inverse function}. 

We now prove the second item.  The assertions in  \eqref{s_a_01} follow from the inclusion
\begin{equation} 
\p{u,\omega-\zeta}\in \p{L^2\p{\R^+;\z{H}^{2+s}_\perp}\cap H^1\p{\R^+;\z{H}^s_\perp}}\times\p{L^2\p{\R^+;H^{2+s}}\cap H^1\p{\R^+;H^s}}
\end{equation}
and the second item of  Proposition \ref{interpolation}.   Next we prove \eqref{s_a_02}.  Applying $\p{I-\P}$ to the third component of the identity $\mathcal{Q}\p{u,\omega,p}=\p{u_0,\omega_0,f,g}$ reveals that
\begin{equation}
\grad p=\p{I-\P}\p{f-u\cdot\grad u}. 
\end{equation}
Hence, by Propositions \ref{potential_map} and \ref{products} we find that up to suitable constants $A_0,A_1,A_1\in\R^+$ (independent of time) it holds for almost every $t\in\R^+$:
\begin{multline}
    \norm{p\p{t}}_{\z{H}^{1+s}}=\norm{\Pi\p{I-\P}\p{f\p{t}-u\p{t}\cdot\grad u\p{t}}}_{\z{H}^{1+s}}\\\le A_0  \p{\norm{\p{I-\P}f\p{t}}_{\z{H}^s_\|}+\norm{u\p{t}\cdot\grad u\p{t}}_{\z{H}^s}}
    \le A_1\p{\norm{\p{I-\P}f\p{t}}_{\z{H}^{1+s}_\|}+\norm{u\p{t}}_{\z{H}^{1+s}}^2}
\end{multline}
and
\begin{equation}
    \norm{\p{I-\P}f\p{t}}_{\z{H}^s_\|}=\norm{\grad p\p{t}+\p{I-\P}\p{u\p{t}\cdot\grad u\p{t}}}_{\z{H}^s}\le A_2 \p{\norm{p\p{t}}_{\z{H}^{1+s}}+\norm{u\p{t}}_{\z{H}^{1+s}}^2}.
\end{equation}
In light of \eqref{s_a_01},  these estimates imply the equivalence asserted in \eqref{s_a_02}.  This proves the second item.

We now turn to the proof of the decay estimates in the third, fourth, and fifth items.  We know from Theorem \ref{inverse function} that $(0,\zeta,0) \in \mathcal{V}$ is the solution corresponding to $(0,\zeta_0,0,0) \in \mathcal{W}$.  From this, \eqref{s_a_00}, and Proposition \ref{interpolation} we then find that there is a universal constants $\mathcal{K}_1\in\R^+$ for which we have the bound
\begin{equation}\label{s_a_1}
\norm{u\p{t}}_{\z{H}^{1+s}_\perp}^2+\norm{\omega\p{t}-\zeta\p{t}}^2_{H^{1+s}}
\\
\le 
\mathcal{K}_1\p{\norm{u_0}_{\z{H}^{1+s}_\perp}^2+\norm{\omega_0-\zeta_0}^2_{H^{1+s}}+\norm{f}_{L^2\z{H}^s}^2+\norm{g}^2_{L^2H^s}}
\end{equation}
for $t \in \R^+$.

We next study the time evolution of the $L^2$ norms.  We have $\mathcal{Q}\p{u,\omega-\zeta,p}=\p{ u_0,\omega_0-\zeta_0,f,g-u\cdot\grad\zeta}$.  With the aid of the Leray projector, this tells us that the pair $\p{u,\omega-\zeta}$ is a strong solution to the initial value problem
\begin{equation} 
\begin{cases}
\Div u=0 &\text{in } \R^+\times\T^3 \\
\varrho \pd_tu-\p{\ep + \frac{\kappa}{2}}\Delta u - \kappa u+ \varrho \P\p{u\cdot\grad u}= f &\text{in }\R^+\times\T^3\\
j \pd_t\p{\omega-\zeta}-\p{\al+\gam}\Delta\p{\omega-\zeta}-\p{\frac{\al}{3}+\be-\gam}\grad\Div\p{\omega-\zeta} \\
\quad + 2\kappa \p{\omega-\zeta}-\kappa\curl u + j u\cdot\grad\p{\omega-\zeta}
= g- j u\cdot\grad\zeta &\text{in } \R^+\times\T^3  \\
\p{u\p{0},\omega\p{0}-\zeta\p{0}}=\p{u_0,\omega_0-\zeta_0} &\text{on }T^3.
\end{cases}
\end{equation}
Note that integration by parts shows 
\begin{equation}
\int_{\T^3}\P\p{u\cdot\grad u}\cdot u=
\int_{\T^3}\p{u\cdot\grad u}\cdot u= -\int_{\T^3}\p{u\cdot\grad u}\cdot u 
\end{equation}
and 
\begin{equation}
 \int_{\T^3}\p{u\cdot
\grad\p{\omega-\zeta}}\cdot\p{\omega-\zeta}= -\int_{\T^3}\p{u\cdot\grad\p{\omega-\zeta}}\cdot\p{\omega-\zeta},
\end{equation}
and hence all of these quantities vanish.  We may then argue as in Lemmas \ref{ED} and \ref{ED estimates}  to arrive at the following differential inequality for almost every $t\in\R^+$:
\begin{multline}
\p{\int_{\T^3}\f{\varrho}{2}\abs{u\p{t}}^2+\f{j}{2} \abs{\omega\p{t}-\zeta\p{t}}^2}' + \mathcal{C}_0\int_{\T^3}\f{\varrho}{2}\abs{u\p{t}}^2+\f{j}{2} \abs{\omega\p{t}-\zeta\p{t}}^2 + \mathcal{C}_1\norm{\p{u\p{t},\omega\p{t}}}^2_{\z{H}^{1}_\perp\times H^1} \\
\le\int_{\T^3}-\p{u\p{t}\cdot\grad\zeta\p{t}}\cdot\p{\omega\p{t}-\zeta\p{t}} +  u\p{t}\cdot f\p{t}+\p{\omega\p{t}-\zeta\p{t}}\cdot g\p{t}. 
\end{multline}
Let 
\begin{equation}
 \mathcal{C}_0=\min\cb{\f{\pi^2\ep}{\varrho},\f{\ep\kappa}{2j\p{\ep+\kappa}}}, \quad  \mathcal{C}_1=\min\cb{\f{\pi^2\ep}{2},\f{\ep\kappa}{4\p{\ep+\kappa}},2\al,3\be,2\gam},
\end{equation}
and $\mathcal{C}_2$ be an embedding constant from $H^{q-1}\p{\T^3;\R^3}\emb L^\infty\p{\T^3;\R^3}$ (note $q-1>\f32$).  Using these, Cauchy's inequality, and the properties of $\zeta$, we may then estimate the forcing term:
\begin{multline}
\int_{\T^3}-\p{u\p{t}\cdot\grad\zeta\p{t}}\cdot\p{\omega\p{t}-\zeta\p{t}} +  u\p{t}\cdot f\p{t}+\p{\omega\p{t}-\zeta\p{t}}\cdot g\p{t} \\ \le \f{\mathcal{C}_0}{4}\int_{\T^3}\f{j}{2}\abs{\omega\p{t}-\zeta\p{t}}^2+\f{2}{j\mathcal{C}_0} \norm{\grad\zeta\p{t}}^2_{L^\infty \p{\T^3;\R^3}}\int_{\T^3}\abs{u\p{t}}^2 \\  
+\f{\mathcal{C}_0}{4}\int_{\T^3}\f{\varrho}{2}\abs{u\p{t}}^2+\f{j}{2}\abs{\omega\p{t}}^2+\f{2}{\mathcal{C}_0}\max\cb{\f{1}{\varrho},\f{1}{j}}\int_{\T^3}\abs{f\p{t}}^2 + \abs{g\p{t}}^2 \\
\le \f{\mathcal{C}_0}{2}\p{\int_{\T^3}\f{\varrho}{2}\abs{u\p{t}}^2 + \f{j}{2} \abs{\omega\p{t}-\zeta\p{t}}^2} \\ 
+\f{4\pi^2\mathcal{C}_2}{j\mathcal{C}_0} \norm{\zeta\p{t}}^2_{H^q\p{\T^3;\R^3}} \int_{\T^3}\abs{u\p{t}}^2+\f{2}{\mathcal{C}_0}\max\cb{\f{1}{\varrho},\f{1}{j}}\int_{\T^3}\abs{f\p{t}}^2+\abs{g\p{t}
}^2  \\
\le\f{\mathcal{C}_0}{2}\p{\int_{\T^3}\f{\varrho}{2}\abs{u\p{t}}^2+\f{j}{2}\abs{\omega\p{t}-\zeta\p{t}}^2} \\+ 
\f{4\pi^2\mathcal{C}_2}{j\mathcal{C}_0} \exp\p{-\f{4\kappa}{j}t}\norm{\zeta_0}^2_{H^q}\int_{\T^3}\abs{u\p{t}}^2+\f{2}{\mathcal{C}_0}\max\cb{\f{1}{\varrho},\f{1}{j}}\int_{\T^3}\abs{f\p{t}}^2+\abs{g\p{t}
}^2.
\end{multline}

Now define $\mathcal{E}, \xi,\mathcal{F} : \R^+ \to \R$ via 
\begin{equation}
\mathcal{E}\p{t}=\int_{\T^3}\f{\varrho}{2}\abs{u\p{t}}^2+\f{j}{2}\abs{\omega\p{t}-\zeta\p{t}}^2, \quad  \xi\p{t}=\f{\mathcal{C}_0}{2}-\f{4\pi^2\mathcal{C}_2}{\varrho j\mathcal{C}_0}\exp\p{-\f{4\kappa}{j}t}\norm{\zeta_0}^2_{H^q}
\end{equation}
and 
\begin{equation}
\mathcal{F}\p{t}=\f{2}{\mathcal{C}_0}\max\cb{\f{1}{\varrho},\f{1}{j}}\int_{\T^3}\abs{f\p{t}}^2+\abs{g\p{t}}^2. 
\end{equation}
The previous two estimate then imply that 
\begin{equation}\label{s_a_2}
 \mathcal{E}'+\xi\mathcal{E}\le\mathcal{F} \text{ in } \R^+.
\end{equation}
Applying Gronwall's lemma to \eqref{s_a_2}, we find that for almost every $t\in\R^+$ we have the bound
\begin{multline}\label{s_a_3}
\int_{\T^3}\abs{u\p{t}}^2+\abs{\omega\p{t}-\zeta\p{t}}^2  
\le \exp\p{\f{\mathcal{C}_2\pi^2}{4\mathcal{C}_0\kappa\varrho}\norm{\zeta_0}^2_{H^q}} \\
\times \p{\f{\max\cb{\varrho,j}}{\min\cb{\varrho,j}}\exp\p{-\f{\mathcal{C}_0}{2}t} \p{\int_{\T^3}\abs{u_0}^2+\abs{\omega_0-\zeta_0}^2} 
+  \int_{\p{0,t}}\exp\p{-\f{\mathcal{C}_0}{2}\p{t-s}}\mathcal{F}\p{s}\;\m{d}s}.
\end{multline}

We now break to cases based on the decay assumptions of the force and microtorque.  If $f=g=0$, then \eqref{s_a_05} follows by combining \eqref{s_a_1}, \eqref{s_a_3}, and the Sobolev interpolation bound for $h\in H^{1+s}\p{\T^3;\R^3}$
\begin{equation}\label{s_a_interp}
 \norm{h}_{H^{(1-\vartheta)(1+s)}} \le  \norm{h}_{H^{1+s}}^{1-\vartheta}  \norm{h}_{L^{2}}^{\vartheta}.
\end{equation}

Now suppose that $f,g$ decay algebraically, i.e. \eqref{s_a_alg} holds.  Then we can choose $D \in \R^+$ such that $\mathcal{F}\p{t}\le D\p{1+t}^{-\mu}$ for almost every $t\in\R^+$.  Using a change of coordinates, we can then estimate the convolution-like term appearing above as follows:
\begin{equation}
    \int_{\p{0,t}}\exp\p{-\f{\mathcal{C}_0}{2}\p{t-s}}\mathcal{F}\p{s}\;\m{d}s\le D\exp\p{-\f{\mathcal{C}_0}{2}t}\int_{\p{1,\exp\p{\f{\mathcal{C}_0}{2
    }t}}}\f{1}{\p{1+\f{2}{\mathcal{C}_0}\log\p{r}}^\mu}\;\m{d}r.
\end{equation}
The right-most integrand is convex over the interval of integration, so we can bound it from above with a simple two-trapezoid estimate:
\begin{multline}
    \int_{\p{1,\exp\p{\f{\mathcal{C}_0}{2}t}}}\f{1}{\p{1+\f{2}{\mathcal{C}_0}\log\p{r}}^\mu}\;\m{d}r 
    \le \f12\p{\exp\p{\f{\mathcal{C}_0}{4}t}-1}\p{\f{1}{\p{1+\f{t}{2}}^\mu}+1} \\
    +\f12\p{\exp\p{\f{\mathcal{C}_0}{2}t}-\exp\p{\f{\mathcal{C}_0}{4}t}}\p{\f{1}{\p{1+t}^\mu}+\f{1}{\p{1+\f{t}{2}}^\mu}}\\
    \le\exp\p{\f{\mathcal{C}_0}{4}t}+\f{2^\mu+1}{2}\exp\p{\f{\mathcal{C}_0}{2}t}\f{1}{\p{1+t}^\mu}\le\exp\p{\f{\mathcal{C}_0}{2}t}\p{\mathcal{C}_3+\f{2^\mu+1}{2}}\f{1}{\p{1+t}^\mu},
\end{multline}
where 
\begin{equation}
 \mathcal{C}_3=\sup_{t\in\R^+}\p{1+t}^\mu\exp\p{-\f{\mathcal{C}_0}{4}t}\in\R^+.
\end{equation}
Hence, if we let $\mathcal{C}_4=D\p{\mathcal{C}_3+\f{2^\mu+1}{2}}$, then we can combine these bounds with \eqref{s_a_3} to see that for $t\in\R^+$,
\begin{multline}
    \int_{\T^3}\abs{u\p{t}}^2+\abs{\omega\p{t}-\zeta\p{t}}^2 \\
    \le\exp\p{\f{\mathcal{C}_2\pi^2}{4\mathcal{C}_0\kappa\varrho}\norm{\zeta_0}^2_{H^q}}\p{\f{\max\cb{\varrho,j}}{\min\cb{\varrho,j}}\exp\p{-\f{\mathcal{C}_0}{2}t}\int_{\T^3}\abs{u_0}^2+\abs{\omega_0-\zeta_0}^2+\f{\mathcal{C}_4}{\p{1+t}^\mu}}.
\end{multline}
Then \eqref{s_a_03}  follows by combining this with \eqref{s_a_1} and \eqref{s_a_interp}.

Finally, we handle the exponentially decaying case, in which case \eqref{s_a_exp} holds.  Then we can pick $D \in \R^+$ such that $\mathcal{F}\p{t}\le D\exp\p{-\mu t}$ for $t\in\R^+$. Now we can explicitly compute 
\begin{multline}
    \int_{\p{0,t}}\exp\p{-\f{\mathcal{C}_0}{2}\p{t-s}}\mathcal{F}\p{s}\;\m{d}s
    \le D\exp\p{-\f{\mathcal{C}_0}{2}t}\int_{\p{0,t}}\exp\p{\p{\f{\mathcal{C}_0}{2}-\mu}s}\;\m{d}s \\
    =
\begin{cases}
     Dt\exp\p{-\f{\mathcal{C}_0}{2}t}& \text{if }\f{\mathcal{C}_0}{2}=\mu \\
     D\f{\exp\p{-\f{\mathcal{C}_0}{2}t}-\exp\p{-\mu t}}{\mu-\f{\mathcal{C}_0}{2}}&\text{if } \f{\mathcal{C}_0}{2}\neq \mu.
\end{cases}
\end{multline}
Using this and the mean-value theorem, we may then argue as above to deduce that \eqref{s_a_04} holds.

\end{proof}

\appendix

\section{Reduction to velocity fields with vanishing average}\label{app_reduction}
The natural setting for the initial data, force, and microtorque in \eqref{micro_polar} is the space 
\begin{equation}
    H^{1+s}_\perp\p{\T^3;\R^3}\times H^{1+s}\p{\T^3;\R^3}\times L^2\p{\R^+;\z{H}^s\p{\T^3;\R^3}}\times L^2\p{\R^+;H^s\p{\T^3;\R^3}} 
    \text{ for } s\in\R^+\cup\cb{0}.
\end{equation}
However, as we will see below, the corresponding solutions are globally integrable in time if and only if $u_0$ has vanishing average.  It is thus convenient to introduce a change of unknowns that allows us to reduce to studying this case.  This is possible due to the invariance of the micropolar equations \eqref{micro_polar} under Galilean transformations.

\begin{lem}\label{WLOG average zero}
Let $s\in\R^+\cup\cb{0}$. Suppose that $u_0,\omega_0\in H^{1+s}\p{\T^3;\R^3}$ are initial data with $\Div u_0=0$, $f,g\in L^2\p{\R^+;H^s\p{\T^3;\R^3}}$ are forcing and microtorquing with $\int_{\T^3}f\p{t,x}\;\m{d}x=0$ for almost every $t\in\R^+$. Suppose that $\p{u,\omega,p}$ are a corresponding strong solution triple to system \eqref{micro_polar}. Then the following hold.
\begin{enumerate}
\item For almost every $t\in\R^+$ it holds that $\int_{\T^3}u\p{t,x}\;\m{d}x=\int_{\T^3}u_0\p{x}\;\m{d}x$.
\item Given any $b\in\R^3$, the triple $\p{v,\chi,q}: \R^+ \times \T^3 \to \R^3 \times \R^3 \times \R$ defined by  
\begin{equation}
    \begin{cases}
    v\p{t,x}=-b+u\p{t,x+tb}\\\chi\p{t,x}=\omega\p{t,x+tb}\\q\p{t,x}=p\p{t,x+tb}\\\varphi\p{t,x}=f\p{t,x+tb}\\\psi\p{t,x}=g\p{t,x+tb}
    \end{cases} 
\end{equation}
is a strong solution to system \eqref{micro_polar} with initial data $\p{u_0-b,\omega_0}$ and forcing/microtorquing $\p{\varphi,\psi}$.  Moreover, if we posit the space-time regularity of $\p{u,\omega,p}$ to be encoded with the inclusion:
\begin{multline}
    \p{u-\int_{\T^3}u_0,\omega,p}\in\p{L^2\p{\R^+;\z{H}^{2+s}_\perp\p{\T^3;\R^3}}\cap H^1\p{\R^+;\z{H}^s_\perp\p{\T^3;\R^3}}}\\\times\p{L^2\p{\R^+;H^{2+s}\p{\T^3;\R^3}}\cap H^1\p{\R^+;H^s\p{\T^3;\R^3}}}\times\p{L^2\p{\R^+;\z{H}^{1+s}\p{\T^3;\R^3}}},
\end{multline} 
then the same inclusion is true for $\p{v-\int_{\T^3}\p{u_0-b},\chi,q}$.  Also, $\p{\varphi,\psi}$ belong to the same space-time regularity class as $\p{f,g}$.
\end{enumerate}
As a consequence, to understand the solvability of \eqref{micro_polar} with respect to data and forcing / microtorquing quadruples belonging to the space 
\begin{equation}
H^{1+s}_\perp\p{\T^3;\R^3}\times H^{1+s}\p{\T^3;\R^3}\times L^2\p{\R^+;\z{H}^s\p{\T^3;\R^3}}\times L^2\p{\R^+;H^s\p{\T^3;\R^3}}, 
\end{equation}
it is sufficient to understand the system's solvability for data/forcing/microtorquing belonging to the smaller space 
\begin{equation}
\z{H}^{1+s}_\perp\p{\T^3;\R^3}\times H^{1+s}\p{\T^3;\R^3}\times L^2\p{\R^+;\z{H}^s\p{\T^3;\R^3}}\times L^2\p{\R^+;H^s\p{\T^3;\R^3}}. 
\end{equation}
\end{lem}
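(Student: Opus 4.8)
The plan is to prove the two enumerated items by direct computation and then obtain the reduction as a formal consequence of the Galilean invariance exhibited in item (2). For item (1), integrate the momentum equation (the second line of \eqref{micro_polar}) over $\T^3$. By periodicity, $\int_{\T^3}\Delta u = 0$, $\int_{\T^3}\curl\omega = 0$, and $\int_{\T^3}\grad p = 0$; by incompressibility $u\cdot\grad u = \Div(u\otimes u)$, which integrates to zero over $\T^3$ by periodicity; and $\int_{\T^3}f(t,\cdot)=0$ by hypothesis. Hence $\varrho\,\frac{d}{dt}\int_{\T^3}u(t,\cdot)=0$ for a.e.\ $t$, so the claim follows from the initial condition. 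The interchange of $\partial_t$ with $\int_{\T^3}$ and the use of the fundamental theorem of calculus are justified by the assumed space-time regularity together with Proposition \ref{interpolation}.

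For item (2), write $y = x + tb$. Since $\partial_t[h(t,x+tb)] = (\partial_t h)(t,y) + (b\cdot\grad h)(t,y)$ while every spatial differential operator is insensitive to the shift, the material derivative is Galilean invariant: the extra term $b\cdot\grad_y u$ produced by differentiating the shift is cancelled exactly by the $-b\cdot\grad_y u$ coming from $v=-b+u(t,y)$ in the convective term, so $(\partial_t+v\cdot\grad_x)v$ at $(t,x)$ equals $(\partial_t+u\cdot\grad)u$ at $(t,y)$, and likewise $(\partial_t+v\cdot\grad_x)\chi$ at $(t,x)$ equals $(\partial_t+u\cdot\grad)\omega$ at $(t,y)$. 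Together with $\Div_x v = \Div_y u$, $\Delta_x v = \Delta_y u$, $\curl_x\chi = \curl_y\omega$, $\grad_x\Div_x\chi = \grad_y\Div_y\omega$, $\grad_x q = \grad_y p$, and $2\kappa\chi(t,x) = 2\kappa\omega(t,y)$, substitution into \eqref{micro_polar} shows that $(v,\chi,q)$ solves the micropolar system with forcing $\varphi(t,x)=f(t,y)$, microtorquing $\psi(t,x)=g(t,y)$, and, at $t=0$, initial data $(u_0-b,\omega_0)$. For the regularity assertion, note that the shift $x\mapsto x+tb$ acts on the Fourier side by the unimodular multiplier $e^{2\pi i t\,b\cdot k}$, hence is an isometry of every $H^r(\T^3;\R^3)$ and of $\z{H}^r(\T^3;\R^3)$ and preserves spatial averages; this handles the spatial part of each norm. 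For the temporal part, the identity $\partial_t[h(t,\cdot+tb)] = (\partial_t h)(t,\cdot+tb)+b\cdot(\grad h)(t,\cdot+tb)$ shows, after the isometry reduction, that the $L^2$-in-time, $\z{H}^s$- (resp.\ $H^s$-)valued norm of $\partial_t$ of the shifted field is controlled by the corresponding norm of $\partial_t h$ plus $|b|$ times the $L^2$-in-time, $\z{H}^{1+s}$- (resp.\ $H^{1+s}$-)valued norm of $h$, both finite for $h\in L^2\z{H}^{2+s}\cap H^1\z{H}^s$ via the embedding $\z{H}^{2+s}\hookrightarrow\z{H}^{1+s}$. Applying this with $h=u-\int_{\T^3}u_0$ and $h=\omega$, and with the spatial part only for $h=p$, gives the claimed inclusion for $(v-\int_{\T^3}(u_0-b),\chi,q)$; the statement for $(\varphi,\psi)$ is immediate from the isometry. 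One also checks $\int_{\T^3}v(t,\cdot)=-b+\int_{\T^3}u_0$ is constant, consistent with item (1).

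For the reduction, note that the map of item (2) is a bijection on (solution, data)-pairs, its inverse being obtained by replacing $b$ with $-b$. Given any datum $(u_0,\omega_0,f,g)$ in the larger space, choose $b=\int_{\T^3}u_0$, so that $u_0-b\in\z{H}^{1+s}_\perp(\T^3;\R^3)$ and the transformed datum $(u_0-b,\omega_0,\varphi,\psi)$ with $\varphi(t,x)=f(t,x+tb)$, $\psi(t,x)=g(t,x+tb)$ lies in the smaller space (the vanishing-average condition on $\varphi$ being preserved by the shift). Any solution of \eqref{micro_polar} with this transformed datum is carried, by the $(-b)$-transformation, to a solution with the original datum, and by the regularity part of item (2) this correspondence respects the solution class from Theorem \ref{inverse function}. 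Hence solvability over the smaller data space implies solvability over the larger one. There is no deep obstacle here: the heart of the argument is the elementary Galilean-invariance identity of item (2), and the only genuinely delicate point is the bookkeeping in the regularity transfer --- in particular, checking that the extra term $b\cdot(\grad h)(t,\cdot+tb)$ created by differentiating the time-dependent shift stays in $L^2$-in-time with values in $\z{H}^s$ (resp.\ $H^s$), which is exactly where the embedding $\z{H}^{2+s}\hookrightarrow\z{H}^{1+s}$ and the assumed regularity of $h$ enter.
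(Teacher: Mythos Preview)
Your proof is correct and follows essentially the same approach as the paper: conservation of the velocity average by integrating the momentum equation, the Galilean-invariance computation for item (2), and the reduction via $b=\int_{\T^3}u_0$. Your treatment of the regularity transfer is in fact more explicit than the paper's, which dismisses it as ``a trivial verification''; your observation that the spatial shift is a unimodular Fourier multiplier (hence an $H^r$-isometry) and your bookkeeping of the extra $b\cdot\nabla h$ term in the time derivative are exactly the right justifications.
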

\begin{proof}
Suppose that we are given a strong solution triple $\p{u,\omega,p}$ corresponding to the data / forcing / microtorquing quadruple $\p{u_0,\omega_0,f,g}$ as in the hypotheses. Averaging the second equation in \eqref{micro_polar}, and then integration by parts yields the identity for almost everywhere on $\R^+$:
\begin{equation}
    0=\int_{\T^3}\varrho\p{\pd_tu+u\cdot\grad u}-\p{\ep+\f{\kappa}{2}}\Delta u-\kappa\curl\omega+\grad p=\varrho\p{\int_{\T^3}u}'+\int_{\T^3}u\Div u=\varrho\p{\int_{\T^3}u}'.
\end{equation}
Thus, the first item now follows from the fundamental theorem of calculus.

Now, if $\mu\in\N^2$ is a multi-index with $\abs{\mu}\le 2$, then for almost every $\p{t,x}\in\R^+\times\T^3$ we have that  $\pd^\mu v\p{t,x}=\pd^\mu u\p{t,x+tb}$ and $\pd^\mu \chi\p{t,x}=\pd^\mu\omega\p{t,x+tb}$, and $\grad q\p{t,x}=\grad p\p{t,x+tb}$.  We next compute the discrepancy in the time derivatives: $\pd_tv\p{t,x}=\pd_tu\p{t,x+tb}+b\cdot\grad u\p{t,x+tb}$, and  $\pd_t\chi\p{t,x}=\pd_t\omega\p{t,x}+b\cdot\grad\omega\p{t,x+tb}$. Finally, using the previous remarks we verify the following equality  between the two material derivative terms:
\begin{equation}
    \begin{cases}
     \p{\pd_t+v\p{t,x}\cdot\grad}v\p{t,x}=\pd_tu\p{t,x+tb}+u\p{t,x+tb}\cdot\grad u\p{t,x+tb}\\
          \p{\pd_t+v\p{t,x}\cdot\grad}v\p{t,x}=\pd_t\omega\p{t,x+tb}+u\p{t,x+tb}\cdot\grad \omega\p{t,x+tb}.
    \end{cases}
\end{equation}
Thus, evaluation of the system \eqref{micro_polar} at points $\p{t,x+tb}$ for $\p{t,x}\in\R^+\times\T^3$ yields that the triple $\p{v,\chi,q}$ is a strong solution to \eqref{micro_polar} with data $\p{u_0-b,\omega_0}$ and forcing/microtorquing $\p{\varphi,\psi}$. The space-time regularity assertions in the latter part of the second item are now a trivial verification given that the first item shows that the velocity's average is constant in time. The consequence follows by: changing coordinates and unknowns, taking $b$ equal to the spatial average of $u_0$; solving the new system using the hypothesized solvability of \eqref{micro_polar} in the average-zero spaces; and finally transforming back with $b$ equal to the negative of the average of $u_0$ and citing the conclusion of the second item.
\end{proof}

\section{Tools from analysis}\label{app_tools}

In this section we record a number of analysis results used throughout the paper.

\subsection{Real-valued distributions and Fourier coefficients}

The following lemma characterizes when a distribution $T \in \D^\ast\p{\T^d;\C^\ell}$ is actually $\R^\ell-$valued.  To state the result we recall a few definitions.  First, for a sequence $z: \Z^d \to \C^\ell$ we define its reflection $Rz : \Z^d \to \C^\ell$ via $Rz(k) = z(-k)$ for $k\in \Z^d$. Second, for a distribution $T \in \D^\ast\p{\T^d;\C^\ell}$ we define its complex conjugate as the distribution $\bar{T}\in \D^\ast\p{\T^d;\C^\ell}$ given by  $\br{\bar{T}, \psi} = \Bar{\br{T,\bar{\psi}}}$ for each $\psi \in\mathcal{D}(\T^d;\C)$.

\begin{lem}\label{real_lemma}
Let $\ell,d \in \N^+$
The following hold.
\begin{enumerate}
 \item If $f \in L^2(\T^d;\C^\ell)$, then $f$ is $\R^\ell$-valued, i.e. $f = \bar{f}$, if and only if $\hat{f} \in \ell^2(\Z^d;\C^\ell)$ satisfies $\Bar{\hat{f}} = R \hat{f}$.

 \item   If $T \in \D^\ast\p{\T^d;\C^\ell}$, then $T$ is $\R^\ell$-valued, i.e. $T = \bar{T}$, if and only if $\overline{\hat{T}} = R \hat{T}$ holds on the lattice $\Z^d$.

\end{enumerate}
\end{lem}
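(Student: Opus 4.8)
The plan is to prove part (1) first, since part (2) will follow from it by a density/mollification argument together with the observation that the conjugation operation on distributions is compatible with the conjugation on $L^2$. For part (1), suppose $f \in L^2(\T^d;\C^\ell)$. The key identity is the relationship between the Fourier coefficients of $f$ and those of $\bar f$. Using the definition of the Fourier transform in Definition \ref{complex exp}, namely $\hat f(k) = \br{f, \bf{e}_{-k}}$, and the definition of the complex conjugate distribution $\br{\bar f, \psi} = \bar{\br{f, \bar\psi}}$, I would compute
\begin{equation}
 \widehat{\bar f}(k) = \br{\bar f, \bf{e}_{-k}} = \bar{\br{f, \bar{\bf{e}_{-k}}}} = \bar{\br{f, \bf{e}_{k}}} = \bar{\hat f(-k)} = \overline{(R\hat f)(k)},
\end{equation}
using that $\bar{\bf{e}_{-k}} = \bf{e}_k$. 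Thus $\widehat{\bar f} = \overline{R \hat f} = R\overline{\hat f}$ (the operations $R$ and complex conjugation commute). Since the Fourier transform is injective on $L^2(\T^d;\C^\ell)$, the equality $f = \bar f$ holds if and only if $\hat f = \widehat{\bar f} = R \overline{\hat f}$, which rearranges to $\overline{\hat f} = R \hat f$. This proves part (1).

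For part (2), the same formal computation $\widehat{\bar T}(k) = \br{\bar T, \bf{e}_{-k}} = \bar{\br{T, \bar{\bf{e}_{-k}}}} = \bar{\br{T, \bf{e}_k}} = \overline{\hat T(-k)} = \overline{(R\hat T)(k)}$ goes through verbatim at the level of distributions, since $\bf{e}_k \in \D(\T^d;\C)$ and $\bar{\bf{e}_{-k}} = \bf{e}_k$ pointwise. Hence $\widehat{\bar T} = R \overline{\hat T}$ always. The only remaining point is injectivity of the Fourier transform on $\D^\ast(\T^d;\C^\ell)$: if $\hat T \equiv 0$ then $T = 0$. This follows because the trigonometric polynomials $\mathrm{span}\{\bf{e}_k : k \in \Z^d\}$ are dense in $\D(\T^d;\C)$ (a standard fact about the Fréchet topology generated by the seminorms $[\cdot]_m$ — one can, e.g., use Fejér-type summation of the Fourier series of a test function, which converges in every seminorm), so a continuous linear functional annihilating all $\bf{e}_k$ must vanish. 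With injectivity in hand, $T = \bar T \iff \hat T = \widehat{\bar T} = R\overline{\hat T} \iff \overline{\hat T} = R\hat T$.

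The only genuinely non-routine step is the injectivity of the distributional Fourier transform, i.e. the density of trigonometric polynomials in $\D(\T^d;\C)$; everything else is bookkeeping with the definitions and the commutation of $R$ with complex conjugation. I would handle that density step by the Fejér kernel argument: convolving a test function $\psi$ with the $d$-dimensional Fejér kernel $K_N$ produces trigonometric polynomials $K_N * \psi$ whose derivatives $\partial^\alpha(K_N * \psi) = K_N * \partial^\alpha \psi$ converge uniformly to $\partial^\alpha \psi$ (since $K_N$ is an approximate identity and $\partial^\alpha\psi$ is continuous on the compact torus), giving convergence in each seminorm $[\cdot]_m$, hence in the Fréchet topology of $\D(\T^d;\C)$.
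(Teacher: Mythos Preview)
Your proof is correct and follows essentially the same strategy as the paper: compute the Fourier coefficients of the conjugate and reduce to injectivity of the Fourier transform, which in turn rests on the density of trigonometric polynomials in $\D(\T^d;\C)$. The only cosmetic difference is that the paper establishes this density by noting that the full Fourier series of a test function converges in every $H^s$ (hence in each seminorm $[\cdot]_m$) and then expands $\br{T,\psi}$ term-by-term, whereas you invoke Fej\'er means; both arguments work equally well here.
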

\begin{proof}
If $f = \bar{f}$, then we have that
\begin{equation}
 \overline{\hat{f}(k)} = \int_{\T^d} \overline{f(x)} 
 \bf{e}_k\p{x}\;\m{d}x = \int_{\T^d} f(x) \bf{e}_k\p{x}\m{d}x = \hat{f}(-k),
\end{equation}
and if $\bar{\hat{f}} = R \hat{f}$, then for almost every $x\in\T^d$
\begin{equation}
    \overline{f(x)} = \sum_{k \in \Z^d} \overline{\hat{f}(k)}\bf{e}_{-k}\p{x}  = \sum_{k \in \Z^d} \hat{f}(-k)\bf{e}_{-k}\p{x} = \sum_{k \in \Z^d} \hat{f}(k)\bf{e}_{k} = f(x).
\end{equation}
This proves the first item.

We now turn to the proof of the second item.  If $\psi\in\mathcal{D}\p{\T^d;\C}$, then the series $\sum_{k\in\Z^d}\hat{\psi}\p{k}\bf{e}_k$ converges absolutely in the topology of $H^s\p{\T^3;\C^\ell}$ for all $s\in\R$. In particular, for each $m\in\N$ we have the convergence:
\begin{equation}
\lim_{K\to\infty}\sb{\psi-\sum_{\substack{k\in\Z^d\\\abs{k}\le K}}\hat{\psi}\p{k}\bf{e}_k}_m=0,
\end{equation}
where $\sb{\cdot}_m$ is the seminorm from Definition \ref{space of test functions}.  Thus, given $T\in\mathcal{D}^\ast\p{\T^d;\C^\ell}$ such that $\Bar{\hat{T}}=R\hat{T}$, we can compute its action on $\psi$ via the series:
\begin{multline}
    \br{T,\psi}=\\\lim_{K\to\infty}\sum_{\substack{k\in\Z^d\\\abs{k}\le K}}\hat{\psi}\p{k}\br{T,\bf{e}_k}=\lim_{K\to\infty}\sum_{\substack{k\in\Z^d\\\abs{k}\le K}}\hat{\psi}\p{k}\hat{T}\p{-k}=\lim_{K\to\infty}\sum_{\substack{k\in\Z^d\\\abs{k}\le K}}\hat{\psi}\p{k}
R\hat{T}\p{k}=\lim_{K\to\infty}\sum_{\substack{k\in\Z^d\\\abs{k}\le K}}\hat{\psi}\p{k}
\Bar{\hat{T}\p{k}}\\
=\lim_{K\to\infty}\Bar{\sum_{\substack{k\in\Z^d\\\abs{k}\le K}}\hat{\Bar{\psi}}\p{-k}\hat{T}\p{k}}=\lim_{K\to\infty}\Bar{\sum_{\substack{k\in\Z^d\\\abs{k}\le K}}\hat{\Bar{\psi}}\p{-k}\br{T,\bf{e}_{-k}}}=\Bar{\br{T,\lim_{K\to\infty}\sum_{\substack{k\in\Z^d\\\abs{k}\le K}}\hat{\Bar{\psi}}\p{k}\bf{e}_k}}=\Bar{\br{T,\bar{\psi}}}\\=\br{\Bar{T},\psi}.
\end{multline}
This gives the sufficient condition for the second item.  To see that it is also necessary, suppose now that we have $T\in\mathcal{D}^\ast\p{\T^d;\C^\ell}$ satisfying $T=\Bar{T}$. We compute for $k\in\Z^d$:
\begin{equation}
    \Bar{\hat{T}\p{k}}=\Bar{\br{T,\bf{e}_{-k}}}=\Bar{\br{T,\Bar{\bf{e}_k}}}=\br{\Bar{T},\bf{e}_{k}}=\br{T,\bf{e}_k}=\hat{T}\p{-k}=R\hat{T}\p{k}.
\end{equation}
\end{proof}

\subsection{Fractional Sobolev spaces}

Here we record some results in fractional Sobolev spaces, as defined in Definition \ref{sobolev spaces}.

\begin{prop}\label{potential_map}
Let $s \in \R$.  Then the map $\Pi:  H^s_\| \p{\T^3;\R^3} \to \z{H}^{1+s}\p{\T^3;\R}$ defined by 
\begin{equation}
 \Pi f = \sum_{k\in \Z^3 \backslash \{0\}} \p{ \frac{k \cdot \hat{f}(k)}{2\pi i \abs{k}^2} } \bf{e}_k
\end{equation}
is well-defined, bounded, and linear, and $\nabla \Pi f = f$ for all $f \in  H^s_\| \p{\T^3;\R^3}$.  Moreover, $\Pi$ extends to a bounded linear map from $L^2\p{\R^+;H^s_\| \p{\T^3;\R^3}}$ to  $L^2\p{\R^+;\z{H}^{1+s}\p{\T^3;\R}}$ with the same properties.
\end{prop}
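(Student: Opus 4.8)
\textbf{Proof proposal for Proposition \ref{potential_map}.}
The plan is to verify the claimed properties of $\Pi$ by direct Fourier-side computation, exploiting the fact that on the mean-zero Sobolev spaces the norm $\norm{\cdot}_{\z{H}^s}$ is generated purely by the weights $\abs{k}^{2s}$ over $k \in \Z^3 \setminus \{0\}$. First I would check that $\Pi$ lands in the right space. Given $f \in H^s_\|\p{\T^3;\R^3}$, Definition \ref{Leray} tells us that for each $k \neq 0$ the coefficient $\hat{f}(k)$ is parallel to $k$ in $\C^3$, so $\hat{f}(k) = \frac{k \cdot \hat f(k)}{\abs{k}^2} k$ and hence $\abs{k \cdot \hat f(k)} = \abs{k}\abs{\hat f(k)}$. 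Therefore
\begin{equation}
\norm{\Pi f}_{\z{H}^{1+s}}^2 = \sum_{k \in \Z^3 \setminus \{0\}} \abs{k}^{2(1+s)} \abs{ \frac{k \cdot \hat f(k)}{2\pi i \abs{k}^2} }^2 = \frac{1}{4\pi^2} \sum_{k \in \Z^3 \setminus \{0\}} \abs{k}^{2s} \abs{\hat f(k)}^2 = \frac{1}{4\pi^2} \norm{f}_{\z{H}^s}^2,
\end{equation}
which simultaneously shows the series defining $\Pi f$ converges in $\z{H}^{1+s}$ and that $\Pi$ is bounded (indeed an isometry up to the factor $2\pi$). Linearity is immediate from linearity of $f \mapsto \hat f$.

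Next I would confirm $\Pi f$ is genuinely $\R$-valued and mean zero. The mean-zero condition is built into the index set $\Z^3 \setminus \{0\}$, so $\widehat{\Pi f}(0) = 0$. For the reality, I would invoke Lemma \ref{real_lemma}: since $f = \bar f$, we have $\overline{\hat f(k)} = \hat f(-k)$, from which $\overline{ \frac{k \cdot \hat f(k)}{2\pi i \abs{k}^2}} = \frac{(-k)\cdot \hat f(-k)}{2\pi i \abs{k}^2}$, i.e. the Fourier coefficients of $\Pi f$ satisfy the reflection-conjugation identity, so $\Pi f = \overline{\Pi f}$. Then $\nabla \Pi f = f$ follows by computing Fourier coefficients of the gradient: the $j$-th component of $\widehat{\nabla \Pi f}(k)$ is $2\pi i k_j \cdot \frac{k \cdot \hat f(k)}{2\pi i \abs{k}^2} = \frac{k_j (k \cdot \hat f(k))}{\abs{k}^2}$, and summing over $j$ recovers the vector $\frac{(k\cdot \hat f(k))}{\abs{k}^2}k = \hat f(k)$ by the parallelism noted above; at $k = 0$ both sides vanish.

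Finally, for the $L^2$-in-time extension, I would apply the scalar result pointwise in $t$ together with Tonelli's theorem: for $f \in L^2\p{\R^+; H^s_\|\p{\T^3;\R^3}}$, the function $t \mapsto \Pi f(t)$ is measurable (as a composition of the bounded linear $\Pi$ with a strongly measurable map) and
\begin{equation}
\int_{\R^+} \norm{\Pi f(t)}_{\z{H}^{1+s}}^2 \,\m{d}t = \frac{1}{4\pi^2} \int_{\R^+} \norm{f(t)}_{\z{H}^s}^2 \,\m{d}t < \infty,
\end{equation}
so $\Pi f \in L^2\p{\R^+;\z{H}^{1+s}\p{\T^3;\R}}$ with the same operator bound, and the identity $\nabla \Pi f = f$ holds for a.e.\ $t$. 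I do not anticipate a serious obstacle here; the only mild subtlety is bookkeeping the reality/mean-zero conditions through the Fourier representation, which is exactly what Lemma \ref{real_lemma} and Definition \ref{Leray} are set up to handle.
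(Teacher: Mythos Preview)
Your proof is correct and follows essentially the same route as the paper: a direct Fourier-side norm computation, the reality check via Lemma \ref{real_lemma}, and a coefficient-wise verification that $\nabla \Pi f = f$. The only cosmetic difference is that you use the parallelism $\hat f(k)\parallel k$ to obtain an exact identity $\norm{\Pi f}_{\z{H}^{1+s}}^2 = \tfrac{1}{4\pi^2}\norm{f}_{\z{H}^s}^2$ and to recover $\hat f(k)$ directly, whereas the paper phrases the bound as an inequality and writes $\widehat{\nabla \Pi f}(k) = \tfrac{k\otimes k}{\abs{k}^2}\hat f(k) = (I-\P)\hat f(k)$ before invoking $f\in H^s_\|$.
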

\begin{proof}
First we compute
\begin{equation}
 \norm{\Pi f}_{\z{H}^{1+s}}^2 = \sum_{k \in \Z^3 \backslash \{0\}} \abs{k}^{2+ 2s} \abs{ \frac{k \cdot \hat{f}(k)}{2\pi i \abs{k}^2} }^2 \le \frac{1}{4\pi^2} \sum_{k \in \Z^3 \backslash \{0\}} \abs{k}^{2s} \abs{  \hat{f}(k) }^2 = \frac{1}{4\pi^2}  \norm{ f}_{\z{H}^{s}}^2,
\end{equation}
and then we use Lemma \ref{real_lemma} to see that
\begin{equation}
 \Bar{\widehat{\Pi f}(k)} =  \frac{-k \cdot \Bar{\hat{f}(k) } }{2\pi i \abs{k}^2 } = \frac{-k \cdot \hat{f}(-k)  }{2\pi i \abs{-k}^2 } = \widehat{\Pi f}(-k),
\end{equation}
which then implies that $\Pi f$ is a real-valued distribution.  From this we deduce that $\Pi$ is a well-defined bounded linear map.  Then for  $f \in  H^s_\| \p{\T^3;\R^3}$ we compute
\begin{equation}
\widehat{\nabla \Pi f}(k) = 2\pi i k  \frac{k \cdot \hat{f}(k)}{2\pi i \abs{k}^2} = \f{k \otimes k}{\abs{k}^2} \hat{f}(k),
\end{equation}
to deduce that $\nabla \Pi f = (I- \P) f =f$.  The extension result then follows trivially.

\end{proof}

Next we record a useful product estimate in fractional Sobolev spaces.

\begin{prop}\label{products}
Let $d\in\N^+$, and suppose that $s,t\in\R^+\cup\cb{0}$ satisfy $s>\f{d}{2}$ and $s\ge t$.  Then there exists a constant $C >0$, depending on $s$ and $t$, such that if $f\in H^s\p{\T^d}$ and $h\in H^t\p{\T^d}$, then $fh\in H^t\p{\T^d}$ and 
\begin{equation}
 \norm{fh}_{H^t}\le C\norm{f}_{H^s}\norm{h}_{H^t}.
\end{equation}
\end{prop}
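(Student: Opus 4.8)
The plan is to work on the Fourier side and exploit the algebra-type structure of $H^s$ for $s>d/2$. First I would reduce to the case where $f$ and $h$ have nonnegative Fourier support issues aside and simply write, for $k\in\Z^d$,
\begin{equation}
 \widehat{fh}(k) = \sum_{m\in\Z^d} \hat{f}(k-m)\hat{h}(m),
\end{equation}
so that
\begin{equation}
 \norm{fh}_{H^t}^2 = \sum_{k\in\Z^d} (1+\abs{k}^2)^t \Bigl| \sum_{m\in\Z^d} \hat{f}(k-m)\hat{h}(m)\Bigr|^2.
\end{equation}
The key pointwise inequality is the elementary bound $(1+\abs{k}^2)^{t/2} \le C_t\bigl( (1+\abs{k-m}^2)^{t/2} + (1+\abs{m}^2)^{t/2}\bigr)$, valid because $t\ge 0$; this is the standard Peetre-type ``$\br{k}\lesssim \br{k-m}+\br{m}$'' estimate. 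Since $t\le s$, the term $(1+\abs{k-m}^2)^{t/2}$ is controlled by $(1+\abs{k-m}^2)^{s/2}$, which is where the extra regularity of $f$ is spent.

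Substituting, I would split the sum into two pieces: one where the weight lands on $f$ (giving a factor $\br{k-m}^s \abs{\hat f(k-m)}$ paired with $\abs{\hat h(m)}$) and one where it lands on $h$ (giving $\abs{\hat f(k-m)}$ paired with $\br{m}^t\abs{\hat h(m)}$). Each piece is a discrete convolution of two $\ell^2$-type sequences against an $\ell^1$ sequence; more precisely, after Cauchy--Schwarz in $m$ one arrives at expressions of the form $\bigl(\sum_m \br{k-m}^{-2\sigma}\bigr)^{1/2}$ times an $\ell^2$ convolution, and for the remaining piece one invokes Young's convolution inequality $\lv a * b\rv_{\ell^2}\le \lv a\rv_{\ell^1}\lv b\rv_{\ell^2}$. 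The finiteness of $\sum_{m\in\Z^d}(1+\abs{m}^2)^{-s}$ for $s>d/2$ is exactly the hypothesis $s>d/2$ and is what makes the relevant $\ell^1$ norm finite; this yields $\norm{fh}_{H^t}\le C\norm{f}_{H^s}\norm{h}_{H^t}$ with $C$ depending only on $s,t$ (through $C_t$ and the value of the convergent series).

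The main obstacle — really the only subtle point — is bookkeeping the two-term split cleanly so that in the ``weight on $h$'' piece the companion factor $\abs{\hat f(k-m)}$ is summed against the decaying weight $\br{k-m}^{-2s}$ rather than merely $\br{k-m}^{-2t}$; since $s\ge t$ this is fine, but one must be careful to use $s$, not $t$, there, as otherwise the series need not converge. I would also remark that the statement as used in the paper is applied with vector- or matrix-valued $f,h$ and to products like $u\cdot\grad\zeta$; these follow immediately by applying the scalar estimate componentwise and summing, since the number of components is fixed and contributes only to the constant. Everything else is routine.
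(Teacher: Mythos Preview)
Your direct Fourier approach is genuinely different from the paper's. The paper fixes $f$, observes that the multiplication operator $T_f h=fh$ is bounded $L^2\to L^2$ (via the embedding $H^s\hookrightarrow C^0_b$) and $H^s\to H^s$ (algebra property), and then complex-interpolates between these endpoints to get boundedness $H^t\to H^t$. This is short precisely because it only needs the two easy endpoints.

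Your sketch, however, has a real gap in the piece where the weight lands on $f$. After the additive Peetre split you are looking at
\[
I_A(k)=\sum_{m}\br{k-m}^t\,|\hat f(k-m)|\,|\hat h(m)|,
\]
and you propose to control $\lv I_A\rv_{\ell^2}$ by Young or Cauchy--Schwarz. Writing $F_t(n)=\br{n}^t|\hat f(n)|$ and $G(m)=|\hat h(m)|$, Young gives either $\lv F_t\rv_{\ell^1}\lv G\rv_{\ell^2}$ or $\lv F_t\rv_{\ell^2}\lv G\rv_{\ell^1}$. The first needs $\sum_n\br{n}^{2(t-s)}<\infty$, i.e.\ $s-t>d/2$; the second needs $\sum_m\br{m}^{-2t}<\infty$, i.e.\ $t>d/2$. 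In the regime $\max\{0,s-d/2\}\le t\le d/2$ (nonempty whenever $d/2<s\le d$, e.g.\ $d=3$, $s=2$, $t=1$) \emph{both fail}, and the Cauchy--Schwarz variants you describe run into the same obstruction --- you always end up needing one of these two divergent series. Bumping $\br{k-m}^t$ up to $\br{k-m}^s$ only makes things worse, since $\br{\cdot}^s|\hat f|\in\ell^2$ but not $\ell^1$.

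The fix on the Fourier side is to split the $m$-sum according to whether $\br{m}\le\br{k-m}$ or not; in each region the kernel $\br{k}^t\br{k-m}^{-s}\br{m}^{-t}$ is controlled by $\br{\text{smaller variable}}^{-s}$, and \emph{that} is what lands you in $\ell^1$ via $s>d/2$. Without that extra structural observation the argument does not close, and your caution about ``using $s$, not $t$'' does not address this particular failure mode. The interpolation proof in the paper sidesteps the issue entirely.
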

\begin{proof}
Since $s > d/2$ we have that $H^s(\T^d)$ is an algebra and that $H^s(\T^d) \hookrightarrow C^0_b(\T^d)$.  Consequently, for a fixed $f \in H^s(\T^d)$, if we define the linear operator $T_f$ via $T_f g = fg$, then $T_f$ is bounded linear map from $L^2\p{\T^d}$ to $L^2\p{\T^d}$ and from $H^s\p{\T^d}$ to $H^s\p{\T^d}$ satisfying
\begin{equation}
 \norm{T_f}_{\mathcal{L}(L^2)} \le \norm{f}_{C^0_b} \lesssim \norm{f}_{H^s} \text{ and } \norm{T_f}_{\mathcal{L}(H^s)}  \lesssim \norm{f}_{H^s}.
\end{equation}
If $s=t$, then we're done.  Otherwise $0 < t < s$, and so the theory of complex interpolation (see, for instance, Theorem 6.23 in~\cite{folland_1}) implies that $T_f$ is a bounded linear map from $H^t(\T^d)$ to $H^t(\T^d)$ and that there exists $\theta \in (0,1)$, depending on $t,s$, such that
\begin{equation}
 \norm{T_f}_{\mathcal{L}(H^t)} \le C  \norm{T_f}_{\mathcal{L}(L^2)}^{\theta}  \norm{T_f}_{\mathcal{L}(H^s)}^{1-\theta} \le C  \norm{f}_{H^s}
\end{equation}
for a constant $C>0$ depending only on $s,t$.  Then for $h \in H^t(\T^d)$ we have that $fh = T_f h \in H^t(\T^d)$ with the stated estimate.
\end{proof}

\subsection{Space-time Sobolev spaces}
 
Here we consider some useful embedding  properties of the space-time Sobolev spaced defined in Definition \ref{time into banach}.

\begin{prop}\label{interpolation}
Suppose that $r\in\R$, $n\in\N$, and $I= (0,T)$ for $T \in (0,\infty]$.  Then following hold.
\begin{enumerate}
\item For every $\N\ni k\le n$ and for all $f\in L^2\p{I;H^{2+r}\p{\T^3;\R^3}}\cap H^{n+1}\p{I;H^{r-2n}\p{\T^3;\R^3}}$ we have that $f\in H^k\p{I;H^{2+r-2k}\p{\T^3;\R^3}}$.  Moreover, there exists a constant $C$ independent of $f$ such that 
\begin{equation}
\norm{f}_{H^kH^{2+r-2k}}\le C\p{\norm{f}_{L^2H^{2+r}}+\norm{f}_{H^{n+1}H^{r-2n}}}. 
\end{equation}

\item If $\N\ni k\le n$, then for every $f\in L^2\p{I;H^{2+r}\p{\T^3;\R^3}}\cap H^{n+1}\p{I;H^{r-2n}\p{\T^3;\R^3}}$ we may redefine $f$ on a null set to arrive at the inclusion $f\in UC^k_b\p{I;H^{r-2k+1}\p{\T^3;\R^3}}$.  Moreover, there exists a constant $C$ depending on only $k$ such that 
\begin{equation}
\norm{f}_{C^{k}_bH^{r-2k+1}}\le C\p{\norm{f}_{L^2H^{2+r}}+\norm{f}_{H^{n+1}H^{r-2n}}}. 
\end{equation}
Finally, if $T=\infty$, then we have $\lim_{t\to\infty}\norm{f(t)}_{H^{r-2k+1}}=0$.
\end{enumerate}
\end{prop}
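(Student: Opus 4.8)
The plan is to pass to the spatial Fourier side, where both the hypotheses and the conclusions decouple into frequency-by-frequency scalar statements on the interval $I$; I would prove those uniformly in the frequency and then reassemble by summation. For $\xi\in\Z^3$ write $\langle\xi\rangle=\p{1+\abs{\xi}^2}^{1/2}$, and for $f$ in the domain set $a_\xi(t)=\hat f(t,\xi)\in\C^3$. Then the hypotheses say $\sum_\xi\langle\xi\rangle^{2(2+r)}\int_I\abs{a_\xi}^2<\infty$ and $\sum_\xi\langle\xi\rangle^{2(r-2n)}\int_I\abs{a_\xi^{(j)}}^2<\infty$ for $0\le j\le n+1$, while the conclusion of item~(1) is equivalent to
\[
\sum_\xi\langle\xi\rangle^{2(2+r-2k)}\int_I\abs{a_\xi^{(k)}}^2\;\lesssim\;\sum_\xi\langle\xi\rangle^{2(2+r)}\int_I\abs{a_\xi}^2+\sum_\xi\langle\xi\rangle^{2(r-2n)}\int_I\abs{a_\xi^{(n+1)}}^2 .
\]
So it suffices to prove, for every (vector-valued) $b\in H^{n+1}(I)$ and every $\lambda\ge1$, the frequency-uniform interpolation inequality $\int_I\abs{b^{(k)}}^2\le C\p{\lambda^{4k}\int_I\abs{b}^2+\lambda^{-4(n+1-k)}\int_I\abs{b^{(n+1)}}^2}$ with $C=C(n,k,T)$, and then to apply it with $b=a_\xi$, $\lambda=\langle\xi\rangle$ and sum. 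This inequality follows from the one-dimensional Gagliardo--Nirenberg inequality $\norm{b^{(k)}}_{L^2(I)}\le C\p{\norm{b^{(n+1)}}_{L^2(I)}^{\theta}\norm{b}_{L^2(I)}^{1-\theta}+\norm{b}_{L^2(I)}}$ with $\theta=k/(n+1)$, squared, followed by Young's inequality applied to the product $\norm{b^{(n+1)}}_{L^2(I)}^{2\theta}\norm{b}_{L^2(I)}^{2(1-\theta)}$ with weight $\lambda^{-4(n+1-k)}$; a short computation using $\theta/(1-\theta)=k/(n+1-k)$ shows the two resulting powers of $\lambda$ are precisely $\lambda^{-4(n+1-k)}$ and $\lambda^{4k}$, and the leftover $\norm{b}_{L^2(I)}^2$ is absorbed into $\lambda^{4k}\norm{b}_{L^2(I)}^2$ since $\lambda\ge1$. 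Checking that the various weak time derivatives taken in the spaces $H^{2+r-2j}$ agree with those taken in $H^{r-2n}$ (into which each embeds) is routine, as weak derivatives are tested against scalar test functions; this delivers item~(1) and its estimate.

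For item~(2), item~(1) (applied with $k+1$ in place of $k$ if $k<n$, and coming directly from the hypothesis $f\in H^{n+1}(I;H^{r-2n})$ if $k=n$) shows $g:=f^{(k)}$ lies in $L^2(I;H^{a})\cap H^1(I;H^{a-2})$ with $a:=2+r-2k$. It therefore suffices to prove the trace statement: any $g\in L^2(I;H^a)\cap H^1(I;H^{a-2})$ agrees a.e.\ with an element of $UC^0_b(I;H^{a-1})$ with $\norm{g}_{C^0_bH^{a-1}}\le C\p{\norm{g}_{L^2H^a}+\norm{g}_{H^1H^{a-2}}}$ and, when $T=\infty$, with $\lim_{t\to\infty}\norm{g(t)}_{H^{a-1}}=0$. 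The workhorse is the frequency-wise energy identity
\[
\langle\xi\rangle^{2(a-1)}\abs{\hat g(t,\xi)}^2=\langle\xi\rangle^{2(a-1)}\abs{\hat g(s,\xi)}^2+2\int_s^t\m{Re}\sb{\langle\xi\rangle^{2(a-1)}\,\partial_\tau\hat g(\tau,\xi)\,\overline{\hat g(\tau,\xi)}}\,\m{d}\tau ,
\]
whose integrand is bounded by $\langle\xi\rangle^{2a}\abs{\hat g}^2+\langle\xi\rangle^{2(a-2)}\abs{\partial_\tau\hat g}^2$ via weighted AM--GM; summing in $\xi$ gives $\norm{g(t)}_{H^{a-1}}^2\le\norm{g(s)}_{H^{a-1}}^2+\norm{g}_{L^2H^a}^2+\norm{g}_{H^1H^{a-2}}^2$. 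Choosing $s$ to nearly minimize $\norm{g(s)}_{H^{a-1}}^2$, together with $\inf_s\norm{g(s)}_{H^{a-1}}^2\le\abs{I}^{-1}\norm{g}_{L^2H^{a-1}}^2\le\abs{I}^{-1}\norm{g}_{L^2H^a}^2$, yields the sup bound, the $\abs{I}^{-1}$-term dropping out when $\abs{I}=\infty$. Continuity of $t\mapsto g(t)$ into $H^{a-1}$ follows by combining absolute continuity of $t\mapsto\norm{g(t)}_{H^{a-1}}^2$ (its derivative $2\,\m{Re}\,\langle g'(t),g(t)\rangle$ is in $L^1(I)$ by the Cauchy--Schwarz inequality in space and time, splitting the $H^{a-1}$ inner product as an $H^{a-2}$--$H^a$ pairing) with weak continuity of $g$ into $H^{a-1}$ (inherited from continuity into $H^{a-2}$ and boundedness in $H^{a-1}$), since in a Hilbert space weak convergence plus convergence of norms implies strong convergence; uniform continuity is then automatic on compact intervals, and on $\R^+$ once the decay at infinity is in hand. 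That decay comes from letting $s\to\infty$ in the displayed identity: the tail $\int_s^\infty\abs{\langle g',g\rangle}$ tends to $0$, and $\liminf_{s\to\infty}\norm{g(s)}_{H^{a-1}}^2=0$ because $t\mapsto\norm{g(t)}_{H^{a-1}}^2$ is a uniformly continuous $L^1$-function on $[0,\infty)$.

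The step I expect to be the main obstacle is the sharpness of item~(2): the time derivative $g'=f^{(k+1)}$ lives only in $H^{2+r-2(k+1)}$, a full two spatial derivatives below the space $H^{2+r-2k}$ that contains $g$, while the asserted trace space $H^{r-2k+1}$ sits exactly halfway between them, so one cannot bound $\norm{g(t)-g(s)}_{H^{a-1}}$ by a fractional power of $\abs{t-s}$ directly. The resolution is precisely the Lions--Magenes parabolic trace mechanism sketched above---the energy identity together with the weak-continuity-plus-norm-convergence criterion---rather than a direct modulus-of-continuity estimate in time. A minor secondary point: for a finite interval the constant in the $C^0_b$ bound does depend on a positive lower bound for $T$ through the term $\abs{I}^{-1}\norm{g}_{L^2H^{a-1}}^2$, which is harmless here since the proposition is only applied with $I=\R^+$, where that term vanishes.
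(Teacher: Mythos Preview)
Your proof is correct and substantially more detailed than what the paper actually provides: the authors simply cite Theorems 2.3 and 3.1 of Lions--Magenes, \emph{Non-Homogeneous Boundary Value Problems and Applications}, without giving any argument. What you have written is essentially a self-contained reconstruction of the Lions--Magenes parabolic interpolation/trace theory, specialized to the torus so that the spatial Fourier transform reduces everything to one-dimensional frequency-by-frequency statements. Your use of scalar Gagliardo--Nirenberg plus weighted Young for item~(1), and the energy identity combined with the weak-convergence-plus-norm-convergence criterion for item~(2), is exactly the mechanism behind the cited theorems; the frequency decomposition you exploit is a convenience available on $\T^3$ that lets you avoid the more abstract interpolation-space machinery Lions--Magenes use for general Hilbert scales.

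Two small remarks. First, your observation that the $C^0_b$ constant picks up a factor $\abs{I}^{-1}$ on finite intervals is correct and shows the proposition's claim ``depending on only $k$'' is slightly imprecise for short intervals; as you note, the paper only invokes the result with $I=\R^+$, where this is harmless. Second, in the decay argument you invoke uniform continuity of $t\mapsto\norm{g(t)}_{H^{a-1}}^2$ to conclude $\liminf_{s\to\infty}\norm{g(s)}_{H^{a-1}}^2=0$, but in fact nonnegativity and membership in $L^1(\R^+)$ already suffice for this; the uniform continuity of the scalar norm-squared (which you do have, via its $L^1$ derivative) is not needed at that step, though it does no harm.
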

\begin{proof}
This is proved in Theorem 2.3 and 3.1 of \cite{lions_magenes_1}.
 
\end{proof}

\bibliographystyle{alpha}
\bibliography{mpr_biblio.bib}

\end{document}